\def\R{{\mathbb R}}
\def\N{\mathbb{N}}
\def\C{\mathbb{C}}
\def\Z{\mathbb{Z}}
\def\D{\mathbb{D}}
\def\T{\mathbb{T}}
\def\F{\mathbb{F}}
\newtheorem{prop}{\bf Proposition}[section]
\newtheorem{thm}[prop]{\bf Theorem}
\newtheorem{cor}[prop]{\bf Corollary}
\newtheorem{rmk}[prop]{\it Remark}
\newtheorem{ques}[prop]{\bf Question}
\newtheorem{conj}[prop]{\bf Conjecture}
\begin{document}

\title[An invitation to weak amenability]{{\bf\Large An invitation to weak amenability, after Cowling and Haagerup}}

\author[I. Vergara]{Ignacio Vergara}
\address{Departamento de Matem\'atica y Ciencia de la Computaci\'on, Universidad de Santiago de Chile, Las Sophoras 173, Estaci\'on Central 9170020, Chile}

\email{ign.vergara.s@gmail.com}
\thanks{This work is supported by the FONDECYT project 3230024.}

\makeatletter
\@namedef{subjclassname@2020}{%
  \textup{2020} Mathematics Subject Classification}
\makeatother

\subjclass[2020]{Primary 43A07; Secondary 46L07, 46B28}
%

\keywords{Weak amenability, completely bounded approximation property, Herz--Schur multipliers}

\begin{abstract}
We present an introduction to weak amenability for locally compact groups, and a survey of some of the most important results regarding this property.
\end{abstract}


\begingroup
\def\uppercasenonmath#1{} 
\let\MakeUppercase\relax 
\maketitle
\endgroup

{\hypersetup{linkcolor=black}
\tableofcontents
}

\section{{\bf Introduction}}

Weak amenability is an analytic property of groups that generalises amenability from the point of view of harmonic analysis and operator algebras. It was first introduced (under that name) by Cowling and Haagerup in \cite{CowHaa}. In that same article, they introduced what is now called the Cowling--Haagerup constant of a group, which serves as a quantification of how strongly this property is satisfied. More importantly, this constant provides an invariant at the von Neumann algebra and $\mathbf{C}^*$-algebra levels, making this property particularly interesting for the study of operator algebras.

A (discrete) group $G$ is said to be amenable if there is a left-invariant mean on the space $\ell^\infty(G)$. This property can be characterised by the fact that the constant function $1$ on $G$ can be approximated by finitely supported, positive definite functions. Weak amenability is defined by relaxing this approximation, and requiring the functions to be uniformly bounded for a certain multiplier norm instead. Operator algebras and harmonic analysis are at the heart of this definition, but as we will see, there are deep connections with geometric group theory, the structure of Lie groups, and measured group theory.

The aim of these notes is twofold. Firstly, they are intended as an introduction to weak amenability and the completely bounded approximation property, which is its operator algebraic counterpart (Sections \ref{Sec_OA_CBAP}--\ref{Sec_wa}). The second objective is to provide an account on some of the most important results around this property (Sections \ref{Sec_Lie}--\ref{Sec_AP_HK}).

We begin by discussing the completely bounded approximation property (CBAP) in Section \ref{Sec_OA_CBAP}, as well as the operator algebraic language necessary to understand it.

Section \ref{Sec_Fourier_alg} is devoted to harmonic analysis on locally compact groups. More concretely, we introduce the Fourier algebra and the notion of Fourier multiplier.

Weak amenability is finally defined in full detail in Section \ref{Sec_wa}. Some stability properties are also discussed, as well as its connection to the CBAP.

Section \ref{Sec_Lie} is devoted to Lie groups, which are at the genesis of weak amenability, and for which this property is very well understood nowadays. A series of very deep results are condensed in three theorems stated in that section, together with the history of how they were obtained.

Very early on, the geometry of trees proved to be useful to the study of weak amenability. This opened the door to the use of more tools from geometric group theory, including hyperbolicity and the structure of $\operatorname{CAT}(0)$ cube complexes. These topics are discussed in Section \ref{Sec_ggt}.

An even stronger connection exists with the realm of measured group theory, as witnessed by the fact that the Cowling--Haagerup constant is a measure equivalence invariant. Section \ref{Sec_mgt} is devoted to this subject.

In Section \ref{Sec_obstr}, we focus groups that are not weakly amenable, and the known obstructions to satisfying this property.

In Section \ref{Sec_Haag_prop}, we discuss the Haagerup property and an open problem relating it to weak amenability, often referred to as Cowling's conjecture.

Finally, in Section \ref{Sec_AP_HK}, we introduce an even weaker version of weak amenability, known as the approximation property (AP) of Haagerup and Kraus. Some examples are briefly discussed, as well as the problem of convoluters and pseudo-measures, which has been solved for groups satisfying AP.

Most of the results are presented without proof, but all the references are precisely indicated for the reader who wishes to go further. The only exception is Theorem \ref{Thm_wa_CBAP}, which connects weak amenability with the CBAP. We include its proof because it is a central piece of this theory, and because it emphasises the connections between all the topics discussed in Sections \ref{Sec_OA_CBAP}--\ref{Sec_wa}.

We must point out that these notes do not provide an exhaustive account on weak amenability, and there are certainly some important topics that are not covered. In particular, the connections with the deformation/rigidity theory in von Neumann algebras, and weak amenability for quantum groups and noncommutative dynamical systems do not appear in this text.

For the reader, some basic knowledge of functional analysis and locally compact groups will be assumed; our preferred references for this are the books \cite{Rud} and \cite{Fol} respectively. Although knowing the basics of operator algebras may certainly be helpful, we made an effort for this not to be a prerequisite, and we included all the relevant definitions and results, opting for a simple and (hopefully) pedagogical presentation. As a result, the experts will probably find those sections sorely lacking, but we believe this to be for the greater good.

\section{{\bf Operator algebras and the CBAP}}\label{Sec_OA_CBAP}

Weak amenability is a property of groups, but the main motivation behind its definition is the study of operator algebras. In this section, we introduce the completely bounded approximation property (CBAP) for $\mathbf{C}^*$-algebras and its analogue (w*-CBAP) for von Neumann algebras. As we will see in Section \ref{Sec_wa}, these properties provide equivalent formulations of weak amenability for discrete groups.

\subsection{Operator algebras and completely bounded maps}
We quickly review here the definitions of $\mathbf{C}^*$-algebras, von Neumann algebras and completely bounded maps; we refer the reader to \cite{Mur}, \cite{BroOza} and \cite{Pis} for more detailed expositions of these topics.

Let $\mathcal{H}$ be a (complex) Hilbert space and let $\mathbf{B}(\mathcal{H})$ denote the algebra of bounded linear operators on $\mathcal{H}$, endowed with the operator norm:
\begin{align*}
\|T\|=\sup\{\|T\xi\|\ \mid\ \xi\in\mathcal{H},\ \|\xi\|\leq 1\},\quad \forall T\in\mathbf{B}(\mathcal{H}).
\end{align*}
For every $T\in\mathbf{B}(\mathcal{H})$, the adjoint operator $T^*$ is the unique element of $\mathbf{B}(\mathcal{H})$ satisfying
\begin{align*}
\langle T^*\xi,\eta\rangle=\langle\xi,T\eta\rangle,\quad\forall\xi,\eta\in\mathcal{H}.
\end{align*}
A $\mathbf{C}^*$-algebra is a norm-closed subalgebra of $\mathbf{B}(\mathcal{H})$ that is closed under taking adjoints. A von Neumann algebra is a $\mathbf{C}^*$-algebra $\mathcal{A}\subseteq\mathbf{B}(\mathcal{H})$ such that $\mathrm{Id}_{\mathcal{H}}\in\mathcal{A}$ and which is closed for the strong operator topology (SOT), i.e. the topology generated by the family of semi-norms
\begin{align*}
T\in\mathbf{B}(\mathcal{H})\mapsto\|T\xi\|,\quad (\xi\in\mathcal{H}).
\end{align*}

Let $\mathcal{A}\subseteq\mathbf{B}(\mathcal{H})$ be a $\mathbf{C}^*$-algebra. For every $n\geq 1$, let $M_n(\mathcal{A})$ be the algebra of $\mathcal{A}$-valued $n\times n$ matrices, endowed we the usual matrix multiplication. There is a natural $\mathbf{C}^*$-algebra structure on $M_n(\mathcal{A})$ given by the inclusion
\begin{align*}
M_n(\mathcal{A})\subseteq M_n(\mathbf{B}(\mathcal{H}))=\mathbf{B}(\mathcal{H}^n),
\end{align*}
where $\mathcal{H}^n=\mathcal{H}\oplus\cdots\oplus\mathcal{H}$ is the direct sum of $n$ copies of $\mathcal{H}$. Let $\mathcal{B}$ be another $\mathbf{C}^*$-algebra, and let $u:\mathcal{A}\to\mathcal{B}$ be a linear map. We can define $u_n:M_n(\mathcal{A})\to M_n(\mathcal{B})$ by
\begin{align}\label{maps_u_n}
u_n((x_{ij})_{i,j})=(u(x_{ij}))_{i,j},\quad\forall (x_{ij})_{i,j}\in M_n(\mathcal{A}).
\end{align}
We say that $u$ is completely bounded if
\begin{align*}
\|u\|_{\mathrm{cb}}=\sup_n\|u_n\|<\infty.
\end{align*}
We call $\|u\|_{\mathrm{cb}}$ the completely bounded norm of $u$.

A very important class of completely bounded maps is given by the completely positive ones. An element $x\in\mathcal{A}$ is said to be positive, and we write $x\geq 0$, if $x=y^*y$ for some $y\in\mathcal{A}$. A map $u:\mathcal{A}\to\mathcal{B}$ is positive if it sends positive elements to positive elements. We say that $u$ is completely positive if the map \eqref{maps_u_n} is positive for every $n$. The following result, due essentially to Stinespring \cite{Sti}, connects these two classes of maps; see \cite[Theorem 1.22]{Pis} for a proof.

\begin{thm}
Let $\mathcal{A}$ and $\mathcal{B}$ be $\mathbf{C}^*$-algebras. Let $u:\mathcal{A}\to\mathcal{B}$ be a completely positive linear map. Then $u$ is completely bounded and
\begin{align*}
\|u\|_{\mathrm{cb}}=\|u\|=\sup\{\|u(x)\| \ \mid\ x\geq 0, \ \|x\|\leq 1\}.
\end{align*}
If, in addition, $\mathcal{A}$ is unital, then
\begin{align*}
\|u\|_{\mathrm{cb}}=\|u(1)\|.
\end{align*}
\end{thm}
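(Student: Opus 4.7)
My plan is to base everything on the Stinespring dilation theorem, which is the essential input. After fixing a faithful representation $\mathcal{B}\subseteq\mathbf{B}(\mathcal{K})$, the map $u$ becomes a completely positive map into $\mathbf{B}(\mathcal{K})$, and Stinespring supplies a Hilbert space $\mathcal{H}'$, a $*$-representation $\pi:\mathcal{A}\to\mathbf{B}(\mathcal{H}')$, and a bounded operator $V\in\mathbf{B}(\mathcal{K},\mathcal{H}')$ such that
\[
u(x)=V^*\pi(x)V,\qquad\forall x\in\mathcal{A}.
\]
I take this dilation for granted; the remainder of the argument is soft.

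From this factorisation I first extract the upper bound on $\|u\|_{\mathrm{cb}}$. For each $n\geq 1$ the amplification has the form
\[
u_n(X)=(V\otimes\mathrm{Id}_n)^*\,\pi_n(X)\,(V\otimes\mathrm{Id}_n),
\]
where $\pi_n$ denotes the $n$-fold amplification of $\pi$, still a $*$-representation and therefore contractive. Since $\|V\otimes\mathrm{Id}_n\|=\|V\|$, this yields $\|u_n(X)\|\leq\|V\|^2\|X\|$, whence $\|u\|_{\mathrm{cb}}\leq\|V\|^2$.

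It remains to establish the matching lower bound $\|V\|^2\leq\sup\{\|u(a)\|:a\geq 0,\,\|a\|\leq 1\}$. In the unital case this is immediate: $u(1)=V^*\pi(1)V=V^*V$, so $\|u(1)\|=\|V\|^2$, and both sharpenings in the statement drop out at once. For the non-unital case I would pass to the non-degenerate part of the dilation, restricting $\pi$ to the closed subspace $\overline{\pi(\mathcal{A})V\mathcal{K}}$, which one checks contains $V\mathcal{K}$, and choose a positive contractive approximate unit $(e_\lambda)$ of $\mathcal{A}$. On the non-degenerate subspace $\pi(e_\lambda)$ converges strongly to the identity, so for every unit vector $\xi\in\mathcal{K}$,
\[
\|V\xi\|^2=\lim_\lambda\langle\pi(e_\lambda)V\xi,V\xi\rangle=\lim_\lambda\langle u(e_\lambda)\xi,\xi\rangle\leq\sup\{\|u(a)\|:a\geq 0,\,\|a\|\leq 1\},
\]
which gives the desired estimate.

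Combining these with the trivial chain
\[
\sup\{\|u(a)\|:a\geq 0,\,\|a\|\leq 1\}\leq\|u\|\leq\|u\|_{\mathrm{cb}}\leq\|V\|^2
\]
forces equality throughout, and in the unital case pins the common value down to $\|u(1)\|$. The one genuinely delicate step is the non-unital case, where I must verify that $V\mathcal{K}$ sits inside $\overline{\pi(\mathcal{A})V\mathcal{K}}$ so that the approximate-unit argument runs; this is where I expect the bulk of the technical work to lie, and it is the point at which complete positivity (rather than just positivity) is truly exploited through Stinespring.
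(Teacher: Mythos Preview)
The paper does not supply a proof of this theorem; it attributes the result to Stinespring and refers the reader to \cite[Theorem 1.22]{Pis}. Your approach via the Stinespring dilation is the classical one and matches that attribution, so there is nothing in the paper to compare against beyond this.

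Your argument is correct in outline, but one point in the non-unital case deserves clarification. As written, the claim that $V\mathcal{K}\subseteq\overline{\pi(\mathcal{A})V\mathcal{K}}$ need not hold for an arbitrary Stinespring dilation: nothing prevents $V\xi$ from having a component orthogonal to $\overline{\pi(\mathcal{A})\mathcal{H}'}$. The clean fix is not to verify that inclusion, but to replace $V$ by $QV$, where $Q$ is the orthogonal projection onto $\mathcal{H}_1'=\overline{\pi(\mathcal{A})V\mathcal{K}}$. Since $\mathcal{H}_1'$ is $\pi$-invariant, one has $\pi(x)(I-Q)V\xi\in\mathcal{H}_1'$ (because $\pi(x)V\xi$ and $\pi(x)QV\xi$ both lie in $\mathcal{H}_1'$) and simultaneously $\pi(x)(I-Q)V\xi\in\mathcal{H}_1'^{\perp}$, hence it vanishes. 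Thus the dilation formula $u(x)=(QV)^*\pi(x)(QV)$ persists, the upper bound $\|u\|_{\mathrm{cb}}\leq\|QV\|^2$ is retained, $\pi$ is non-degenerate on $\mathcal{H}_1'$, and your approximate-unit computation $\|QV\xi\|^2=\lim_\lambda\langle u(e_\lambda)\xi,\xi\rangle$ now goes through verbatim to close the chain of inequalities.
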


\subsection{Operator algebras associated to groups}
Let $\mathcal{H}$ be a Hilbert space. The unitary group of $\mathcal{H}$ is
\begin{align*}
\mathbf{U}(\mathcal{H})=\{T\in\mathbf{B}(\mathcal{H})\ \mid\ T\ \text{ invertible and }\ T^*=T^{-1}\}.
\end{align*}
Now let $G$ be a locally compact group. A unitary representation of $G$ on $\mathcal{H}$ is a group homomorphism $\pi:G\to\mathbf{U}(\mathcal{H})$ that is continuous for the strong operator topology, meaning that the map
\begin{align*}
s\in G \longmapsto \pi(s)\xi \in\mathcal{H}
\end{align*}
is continuous for every $\xi\in\mathcal{H}$.

Let $G$ be a locally compact group endowed with a left Haar measure; we refer the reader to \cite[\S 2]{Fol} for details. The left regular representation $\lambda:G\to\mathbf{U}(L^2(G))$ is defined by
\begin{align}\label{left_reg_rep}
\lambda(s)f(t)=f(s^{-1}t),\quad\forall f\in L^2(G),\ \forall s,t\in G.
\end{align}
A change of variables shows that $\lambda(s)^*=\lambda(s^{-1})$ for every $s\in G$. Furthermore, $L^1(G)$ is a Banach algebra for the convolution product, and it also has a natural involution given by
\begin{align*}
g^*(s)=\overline{g(s^{-1})}\Delta(s^{-1}),\quad \forall g\in L^1(G),\ \forall s\in G,
\end{align*}
where $\Delta:G\to(0,\infty)$ is the modular function; see \cite[\S 2.4]{Fol} for details. This allows us to extend $\lambda$ to a $\ast$-homomorphism $\lambda:L^1(G)\to\mathbf{B}(L^2(G))$ in the following way. For all $g\in L^1(G)$, $f\in L^2(G)$, $t\in G$, we define
\begin{align*}
\lambda(g)f(t)=\int_G g(s)f(s^{-1}t)\, ds,
\end{align*}
where $ds$ denotes the integration with respect to the Haar measure. In other words,
\begin{align*}
\lambda(g)f=g\ast f,\quad\forall g\in L^1(G),\ \forall f\in L^2(G).
\end{align*}
It follows from the definition that
\begin{align*}
\|\lambda(g)\|_{\mathbf{B}(L^2(G))}\leq\|g\|_1,\quad \forall g\in L^1(G).
\end{align*}
The reduced group $\mathbf{C}^*$-algebra $C_\lambda^*(G)$ is defined as the norm-closure of $\lambda(L^1(G))$ in $\mathbf{B}(L^2(G))$. The group von Neumann algebra $LG$ is defined analogously by taking the SOT-closure instead.

\begin{rmk}
If $G$ is a discrete group, the Haar measure is nothing but the counting measure on $G$. In particular, the group algebra $\C[G]$ is dense in $\ell^1(G)$. Hence the algebras $C_\lambda^*(G)$ and $LG$ can be defined similarly as completions of $\lambda(\C[G])$.
\end{rmk}

\subsection{Historical motivation: approximation properties of Banach spaces}\label{Ssec_AP_Banach}
The origins of weak amenability can be traced back to Haagerup's seminal work \cite{Haa} on the reduced $\mathbf{C}^*$-algebra of the free group on 2 generators $C_\lambda^*(\F_2)$. The main result of \cite{Haa} says that $C_\lambda^*(\F_2)$ has the metric approximation property (MAP), which is a strengthening of the approximation property (AP), explored by Grothendieck in \cite{Gro}. Afterwards, de Canni\`ere and Haagerup \cite{deCHaa} showed that $C_\lambda^*(\F_2)$ satisfies an even stronger condition involving completely contractive maps. In what follows, we briefly discuss the aforementioned properties; for a much more detailed presentation, we refer the reader to \cite[\S 1.e]{LinTza}.

Let $E$ be a Banach space. We say that $E$ has the approximation property (AP) if, for every compact subset $K\subset E$ and every $\varepsilon>0$, there is a finite-rank operator $T\in\mathbf{B}(E)$ such that
\begin{align*}
\|Tx-x\|\leq\varepsilon,\quad\forall x\in K.
\end{align*}
Every Hilbert space (and, more generally, every space with a Schauder basis) has the AP, and for a long time the question of whether every Banach space has the AP remained open. The problem was finally settled by Enflo \cite{Enf} who provided a counterexample.

This property can be strengthened by requiring the approximations of the identity to be bounded. More precisely, we say that a Banach space $E$  has the bounded approximation property (BAP) if there is $C\geq 1$ such that, for every compact subset $K\subset E$ and every $\varepsilon>0$, there is a finite-rank operator $T\in\mathbf{B}(E)$ such that
\begin{align}\label{BAP_bound}
\|T\|\leq C,
\end{align}
and
\begin{align*}
\|Tx-x\|\leq\varepsilon,\quad\forall x\in K.
\end{align*}
The uniform bound \eqref{BAP_bound} allows one to see that BAP is equivalent to the existence of a net of finite rank operators $(T_i)_{i\in I}$ in $\mathbf{B}(E)$ such that
\begin{align*}
\|T_i\|\leq C, \quad\forall i\in I,
\end{align*}
and
\begin{align*}
\|T_ix-x\|\to 0,\quad\forall x\in E.
\end{align*}
In the particular case in which the condition above is satisfied with $C=1$, we say that $E$ has the metric approximation property (MAP). The implications
\begin{align*}
\text{MAP}\implies\text{BAP}\implies\text{AP}
\end{align*}
are strict; see \cite[\S 1.e]{LinTza} for details. We now state the main result of \cite{Haa}.

\begin{thm}[Haagerup]
Let $\F_2$ be the free group on 2 generators. The $\mathbf{C}^*$-algebra $C_\lambda^*(\F_2)$ has the MAP.
\end{thm}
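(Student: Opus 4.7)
The plan is to produce, for each $\varepsilon > 0$ and each compact $K \subset C_\lambda^*(\F_2)$, a finite-rank contraction $T$ with $\|Tx - x\| < \varepsilon$ for all $x \in K$. The strategy is classical and proceeds in two stages: first approximate the identity by a continuous family of contractive multipliers $M_{\varphi_r}$ for $r \uparrow 1$, then truncate to finite rank via a sphere-by-sphere estimate. The natural candidates are Fourier multipliers associated to radial functions of the word length $|\cdot|$ on $\F_2$ (equivalently, the distance to the origin in the Cayley tree): given $\varphi : \F_2 \to \C$, define $M_\varphi(\lambda(g)) = \varphi(g)\lambda(g)$, and try $\varphi_r(g) = r^{|g|}$ for $0 < r < 1$, truncated to a ball of finite radius.

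The truly hard step is to prove that $\varphi_r(g) = r^{|g|}$ is positive definite on $\F_2$ for every $r \in [0,1)$. Haagerup establishes this by an explicit and clever construction: using the action of $\F_2$ on its Cayley tree $T$ (the $4$-regular tree), one builds a unitary representation $(\pi_r,\mathcal{H}_r)$ on a Hilbert space associated to the edges of $T$, together with a unit vector $\xi_r$, such that $\langle \pi_r(g)\xi_r,\xi_r\rangle = r^{|g|}$. The verification is combinatorial and exploits the uniqueness of geodesics in a tree. Once positive-definiteness is established, $M_{\varphi_r}$ extends to a unital completely positive map on $C_\lambda^*(\F_2)$, so $\|M_{\varphi_r}\|_{\mathrm{cb}} = 1$. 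Moreover $M_{\varphi_r}(\lambda(g)) = r^{|g|}\lambda(g) \to \lambda(g)$ as $r \to 1^-$, and the uniform bound upgrades this, by density of $\lambda(\C[\F_2])$, to $M_{\varphi_r}x \to x$ in norm for every $x \in C_\lambda^*(\F_2)$, uniformly on compact subsets.

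The second step is to convert $M_{\varphi_r}$ into a finite-rank operator. For each integer $n$, let $T_{r,n}$ be the multiplier associated to $\varphi_{r,n}(g) = r^{|g|}\mathbf{1}_{\{|g|\leq n\}}$; since the ball $B_n = \{g:|g|\leq n\}$ is finite, the range of $T_{r,n}$ is contained in the finite-dimensional span of $\{\lambda(g): g\in B_n\}$, so $T_{r,n}$ has finite rank. The difference $M_{\varphi_r} - T_{r,n}$ is the multiplier associated to $\varphi_r \mathbf{1}_{\{|g|>n\}}$, which decomposes as a sum of sphere multipliers $\sum_{k>n} r^k M_{\mathbf{1}_{S_k}}$ with $S_k = \{g:|g|=k\}$. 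A Haagerup-type inequality (proved, crucially, in the same paper) controls the completely bounded norm of each sphere multiplier at most linearly in $k$; since $r < 1$, the resulting tail $\sum_{k>n}(k+1)r^k$ is small for large $n$. Hence $\|T_{r,n} - M_{\varphi_r}\|_{\mathrm{cb}} \leq \delta_{r,n}$ with $\delta_{r,n}\to 0$ as $n\to\infty$, and $\|T_{r,n}\|_{\mathrm{cb}} \leq 1 + \delta_{r,n}$. Rescaling $\tilde T_{r,n} = (1+\delta_{r,n})^{-1}T_{r,n}$ produces a genuine contraction at the cost of an extra error that also vanishes in the limit.

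A diagonal argument — take $r$ so close to $1$ that $M_{\varphi_r}$ is $\varepsilon/3$-close to the identity on $K$, then $n$ so large that $T_{r,n}$ is $\varepsilon/3$-close to $M_{\varphi_r}$ on $K$, then rescale — completes the proof. The undisputed main obstacle is the positive definiteness of $\varphi_r$: everything else, while requiring the Haagerup sphere inequality, amounts essentially to bookkeeping. Conceptually, it is the tree structure of $\F_2$ rather than anything particular to free groups that drives the argument, and this is precisely why the proof generalises so fruitfully to other groups acting on trees, hyperbolic groups, and $\operatorname{CAT}(0)$ cube complexes, as discussed in Section \ref{Sec_ggt}.
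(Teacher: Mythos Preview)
The paper does not actually prove this theorem; it is one of the results stated with only a citation to Haagerup's original paper \cite{Haa}. Your sketch is a faithful outline of that original argument, and in fact the paper later describes essentially the same strategy in the more general setting of groups acting on trees (Section \ref{Ssec_rad_Schur}): positive definiteness of the exponential kernel $r^{d(\cdot,\cdot)}$, combined with the polynomial bound on the sphere multipliers $\chi_n$ (Theorem \ref{Thm_Boz-Pic}), permits a truncation whose $M_0A$-norm tends to $1$, and then a rescaling yields genuine contractions.

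One small historical remark: Haagerup's original route to the positive definiteness of $r^{|g|}$ is not quite via building the representation $(\pi_r,\xi_r)$ directly, but rather by first showing that the word length $g\mapsto|g|$ is conditionally negative definite (through the explicit cocycle sending $g$ to the characteristic function of the edges on the geodesic $[e,g]$ in $\ell^2(\text{edges})$) and then invoking Schoenberg's theorem. This is equivalent to what you wrote via the GNS construction, so the difference is purely one of packaging. Your identification of this step as the crux, and of the tree geometry as the real engine, is exactly right and matches the perspective the paper takes in Section \ref{Sec_ggt}.
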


\begin{rmk}
This result provided the first example of a non nuclear $\mathbf{C}^*$-algebra satisfying MAP. We say that a $\mathbf{C}^*$-algebra $\mathcal{A}$ is nuclear if, for every $\mathbf{C}^*$-algebra $\mathcal{B}$, the tensor product $\mathcal{A}\otimes\mathcal{B}$ admits a unique norm for which its completion is a $\mathbf{C}^*$-algebra. This property turns out to be equivalent to the CPAP discussed below; see \cite[\S 4.2]{Pis}.
\end{rmk}

\subsection{The completely bounded approximation property}
Now we turn to a notion of approximation that is better suited to the study of $\mathbf{C}^*$-algebras. We say that a $\mathbf{C}^*$-algebra $\mathcal{A}$ has the completely bounded approximation property (CBAP) if there is a constant $C\geq 1$ and a net of finite rank linear maps $T_i:\mathcal{A}\to\mathcal{A}$ ($i\in\ I$) such that
\begin{align*}
\|T_i\|_{\mathrm{cb}}\leq C, \quad\forall i\in I,
\end{align*}
and
\begin{align*}
\|T_ix-x\|\to 0,\quad\forall x\in \mathcal{A}.
\end{align*}
We define $\boldsymbol\Lambda_{\mathrm{cb}}(\mathcal{A})$ as the infimum of all $C\geq 1$ such that the condition above holds. If $\boldsymbol\Lambda_{\mathrm{cb}}(\mathcal{A})=1$, we say that $\mathcal{A}$ has the completely contractive approximation property (CCAP). For a $\mathbf{C}^*$-algebra without the CBAP, we simply define $\boldsymbol\Lambda_{\mathrm{cb}}(\mathcal{A})=\infty$.

A stronger property than the CCAP is the following. We say that a $\mathbf{C}^*$-algebra $\mathcal{A}$ has the completely positive approximation property (CPAP) if there is a net of finite rank, completely positive linear maps $T_i:\mathcal{A}\to\mathcal{A}$ such that
\begin{align*}
\|T_ix-x\|\to 0,\quad\forall x\in \mathcal{A}.
\end{align*}
We have the following implications
\begin{align*}
\text{CPAP}\implies\text{CCAP}\implies\text{CBAP}.
\end{align*}
As we will see, both these implications are strict.

\subsection{The weak*-completely bounded approximation property}
Although von Neumann algebras are $\mathbf{C}^*$-algebras too, it is more convenient to look at them as an entirely different class, in the same way as we view measure spaces and topological spaces as different objects. We refer the reader to the beginning of Chapter III of \cite{Bla} for a discussion about this analogy. What is important here is the fact that the norm topology is not so well suited for studying von Neumann algebras, so in order to define analogues of the properties above for von Neumann algebras, we need to restrict our attention to weak*-continuous maps.

Let $\mathcal{H}$ be a Hilbert space, and let $\mathbf{S}^1(\mathcal{H})$ denote the space of trace-class operators; see e.g. \cite[\S I.8.5]{Bla}. Then $\mathbf{B}(\mathcal{H})$ can be identified with the dual space of $\mathbf{S}^1(\mathcal{H})$ by the duality
\begin{align*}
\langle x,y\rangle = \operatorname{Tr}(xy),\quad\forall x\in \mathbf{B}(\mathcal{H}),\ \forall y\in \mathbf{S}^1(\mathcal{H}).
\end{align*}
Now let $\mathcal{M}\subseteq\mathbf{B}(\mathcal{H})$ be a von Neumann algebra. Then $\mathcal{M}$ is a $\sigma(\mathbf{B}(\mathcal{H}),\mathbf{S}^1(\mathcal{H}))$-closed subspace of $\mathbf{B}(\mathcal{H})$. It is therefore itself a dual space with predual
\begin{align*}
\mathcal{M}_*=\mathbf{S}^1(\mathcal{H})/({}^\perp\!\mathcal{M}),
\end{align*}
where
\begin{align*}
{}^\perp\!\mathcal{M}=\{y\in \mathbf{S}^1(\mathcal{H})\ \mid\ \operatorname{Tr}(xy)=0\ \ \forall x\in\mathcal{M}\}.
\end{align*}
This notation is justified by the fact that von Neumann algebras have a unique predual. The space $\mathcal{M}_*$ can also be identified with the space of normal states on $\mathcal{M}$; see \cite[\S III.2]{Bla} for a detailed discussions of all these facts. Thus it makes sense to speak about the weak*-topology of $\mathcal{M}$.

We say that a von Neumann algebra $\mathcal{M}$ has the weak*-completely bounded approximation property (w*-CBAP) if there is a constant $C\geq 1$ and a net of finite rank, weak*-weak*-continuous linear maps $T_i:\mathcal{M}\to\mathcal{M}$ ($i\in\ I$) such that
\begin{align*}
\|T_i\|_{\mathrm{cb}}\leq C, \quad\forall i\in I,
\end{align*}
and
\begin{align*}
\langle T_ix,y\rangle\to \langle x,y\rangle,\quad\forall x\in \mathcal{M},\ \forall y\in\mathcal{M}_*.
\end{align*}
As before, we define $\boldsymbol\Lambda_{\mathrm{w^*cb}}(\mathcal{M})$ as the infimum of all $C\geq 1$ such that the condition above holds.

\begin{rmk}
The notation $\boldsymbol\Lambda_{\mathrm{w^*cb}}$ is not standard, and usually the same symbol as the one for the CBAP is used in this context. However, since von Neumann algebras can also be viewed as $\mathbf{C}^*$-algebras, it seems pertinent to introduce new nomenclature in order to avoid ambiguities.
\end{rmk}

We define w*-CCAP and w*-CPAP analogously, and we get
\begin{align*}
\text{w*-CPAP}\implies\text{w*-CCAP}\implies\text{w*-CBAP}.
\end{align*}
Again, we will see that both implications are strict.

\section{{\bf The Fourier algebra and multipliers}}\label{Sec_Fourier_alg}

An important theorem of Leptin \cite{Lep} characterises amenability of a group in terms of approximations of the identity of its Fourier algebra. In this section, we introduce the Fourier algebra and its multipliers. These objects will be necessary for defining weak amenability as a generalisation of amenability.

\subsection{Fourier analysis on $\Z$}\label{Ssec_fa_Z}
In order to motivate the definition of the Fourier algebra for general locally compact groups, we begin by analysing the case of $\Z$. Let $\T$ be the compact group $\R/\Z$, which is isomorphic to
\begin{align*}
\mathbf{U}(1)=\{z\in\C\ \mid\ |z|=1\}.
\end{align*}
The Haar measure of $\T$ is nothing but the Lebesgue measure. For every $f\in L^1(\T)$, we can define its Fourier coefficients
\begin{align*}
\hat{f}(n)=\int_{\T}f(z)z^{-n}\, dz,\quad\forall n\in\Z.
\end{align*}
The assignment $f\mapsto\hat{f}$ is the Fourier transform, and it defines a Banach algebra homomorphism $\mathcal{F}:L^1(\T)\to c_0(\Z)$, where $L^1(\T)$ is endowed with the convolution product, and $c_0(\Z)$ is the space of functions on $\Z$ vanishing at infinity endowed with the pointwise product. The Fourier algebra of $\Z$ is defined as the image of $\mathcal{F}$:
\begin{align*}
A(\Z)=\mathcal{F}(L^1(\T))=\{\hat{f}\ \mid\ f\in L^1(\T)\}.
\end{align*}
All these ideas can be extended to any abelian discrete group; see \cite[\S 4]{Fol} for details. Hence we have a natural definition of the Fourier algebra for all such groups. However, we would like to have one definition that can be applied to any locally compact group. This will be possible thanks to the following observation. For every $f\in L^1(\T)$, we can find $g,h\in L^2(\T)$ such that
\begin{align*}
f=gh \qquad\text{and}\qquad \|f\|_1=\|g\|_2\|h\|_2.
\end{align*}
This allows us to write
\begin{align*}
\hat{f}=\mathcal{F}(gh)=\hat{g}\ast\hat{h}.
\end{align*}
Since $\mathcal{F}:L^2(\T)\to\ell^2(\Z)$ is an isomorphism, we get the following description:
\begin{align}\label{A(Z)}
A(\Z)=\{u\ast v\ \mid\ u,v\in\ell^2(\Z)\}.
\end{align}
Furthermore, defining
\begin{align}\label{norm_A(Z)}
\|a\|_{A(\Z)}=\inf\left\{\|u\|_2\|v\|_2\ \mid\ a=u\ast v,\ u,v\in\ell^2(\Z)\right\},\quad\forall a\in A(\Z),
\end{align}
we see that this is exactly the quotient norm induced from $\mathcal{F}:L^1(\T)\to A(\Z)$. From \eqref{A(Z)} and \eqref{norm_A(Z)}, we can forget about the duality $(\T,\Z)$, and work directly on $\Z$, which will allow us to define $A(G)$ in general.

We go back now to the point of view of Fourier analysis in order to define a larger algebra that will be relevant for our purposes. Recall that $L^1(\T)$ is a two-sided ideal in the algebra of complex Randon measures $M(\T)$; see \cite[\S 2.5]{Fol}. Moreover, the Fourier transform can be defined for $\mu\in M(\T)$ as follows:
\begin{align*}
\hat{\mu}(n)=\int_{\T}z^{-n}\,d\mu(z),\quad\forall n\in\Z.
\end{align*}
The Fourier--Stieltjes algebra of $\Z$ is
\begin{align*}
B(\Z)=\mathcal{F}(M(\T))=\{\hat{\mu}\ \mid\ \mu\in M(\T)\}.
\end{align*}
Recall that every $\mu\in M(\T)$ can be decomposed as a linear combination of four positive measures. Hence, by Bochner's theorem (see e.g. \cite[Theorem 4.19]{Fol}), $B(\Z)$ is the linear span of the positive definite functions on $\Z$, i.e. all the functions $\varphi\in\ell^\infty(\Z)$ such that the matrix $(\varphi(n_i-n_j))_{i,j=1}^m$ is positive definite for every finite sequence $n_1,\ldots,n_m\in\Z$. Again, this gives us a way of forgetting that we started from measures on $\T$, and work directly on $\Z$.

\subsection{Unitary representations and the Fourier--Stieltjes algebra}
The Fourier algebra and the Fourier--Stieltjes algebra of a general locally compact group were introduced by Eymard in \cite{Eym}; we refer the reader to \cite{KanLau} for another very good reference.

Let $G$ be a locally compact group, and let $\pi:G\to\mathbf{U}(\mathcal{H})$ be a unitary representation. We say that $\varphi:G\to\C$ is a coefficient of $\pi$ if there are $\xi,\eta\in\mathcal{H}$ such that
\begin{align}\label{coeff_unit}
\varphi(s)=\langle\pi(s)\xi,\eta\rangle,\quad\forall s\in G.
\end{align}
Let $B(G)$ denote the space of all coefficients of unitary representations of $G$. We call it the Fourier--Stieltjes algebra of $G$, and we endow it with the norm
\begin{align*}
\|\varphi\|_{B(G)}=\inf\|\xi\|\|\eta\|,
\end{align*} 
where the infimum is taken over all the decompositions as in \eqref{coeff_unit}. With this norm, $B(G)$ becomes a Banach algebra for pointwise operations; see e.g. \cite[Theorem 2.1.11]{KanLau}. Let $C_b(G)$ be the commutative Banach algebra of continuous bounded functions on $G$. We have a contractive inclusion $B(G)\subseteq C_b(G)$, i.e.
\begin{align*}
\|\varphi\|_\infty\leq\|\varphi\|_{B(G)},\quad\forall \varphi\in B(G).
\end{align*}
Now we relate this definition to the one in Section \ref{Ssec_fa_Z}. We say that $\varphi\in C_b(G)$ is positive definite if, for any finite sequences $s_1,\ldots,s_n\in G$, $z_1,\ldots,z_n\in\C$,
\begin{align*}
\sum_{i,j=1}^n \varphi(s_j^{-1}s_i)z_i\overline{z_j}\geq 0.
\end{align*}
This turns out to be equivalent to the fact that $\varphi$ can be written as
\begin{align*}
\varphi(s)=\langle\pi(s)\xi,\xi\rangle,\quad\forall s\in G,
\end{align*}
for some unitary representation $\pi:G\to\mathbf{U}(\mathcal{H})$ and $\xi\in\mathcal{H}$; see \cite[Theorem C.4.10]{BedlHVa}. Therefore the algebra $B(G)$ is the linear span of the positive definite functions on $G$.

\subsection{The Fourier algebra}
Let $G$ be a locally compact group endowed with a left Haar measure. Recall the definition of the left regular representation $\lambda:G\to\mathbf{U}(L^2(G))$ from \eqref{left_reg_rep}. We define $A(G)$ as the closed subspace of $B(G)$ spanned by all the coefficients of $\lambda$. In other words,
\begin{align*}
A(G)=\overline{\operatorname{span}}\{f\ast\check{g}\ \mid\ f,g\in L^2(G)\}\subseteq B(G),
\end{align*}
where $\check{g}(s)=g(s^{-1})$. This is the rigorous definition of $A(G)$ as a subspace of $B(G)$; however, it follows from \cite[Theorem 2.4.3]{KanLau}, \cite[Proposition 2.3.3]{KanLau} and \cite[Theorem 2.1.11]{KanLau} that
\begin{align*}
A(G)=\{f\ast\check{g}\ \mid\ f,g\in L^2(G)\},
\end{align*}
and that it is a closed ideal of $B(G)$. Moreover, the restriction of the norm of $B(G)$ to $A(G)$ can be calculated as follows:
\begin{align*}
\|a\|_{A(G)}=\inf\left\{\|f\|_2\|g\|_2\ \mid\ a=f\ast\check{g},\ f,g\in L^2(G)\right\},\quad\forall a\in A(G).
\end{align*}
Observe that $A(G)$ is a subalgebra of $C_0(G)$, the algebra of continuous functions on $G$ that vanish at infinity, and
\begin{align*}
\|a\|_\infty\leq\|a\|_{A(G)},\quad\forall a\in A(G).
\end{align*}

Recall that the group von Neumann algebra $LG$ is also defined from the left regular representation, and that being a von Neumann algebra, it is a dual Banach space. It turns out that its predual space is the Fourier algebra; see \cite[Theorem 2.3.9]{KanLau}. More precisely, we have $A(G)^*=LG$ for the duality
\begin{align*}
\langle\lambda(f),a\rangle=\int_G f(s)a(s)\, ds,\quad\forall f\in L^1(G),\ \forall a\in A(G).
\end{align*}

The following theorem of Leptin \cite{Lep} is the starting point for the definition of weak amenability. Recall that a locally compact group is said to be amenable if there is $\phi\in L^\infty(G)^*$ that is positive ($\phi(f)\geq 0$ for every $f\geq 0$), left invariant ($\phi(\delta_t\ast f)=\phi(f)$) and such that $\phi(1)=1$.

\begin{thm}[Leptin]\label{Thm_Lep}
A locally compact group $G$ is amenable if and only if its Fourier algebra $A(G)$ has a bounded approximate identity, i.e. there is a net $(a_i)$ in $A(G)$ such that
\begin{align*}
\|a_i\|_{A(G)}\leq 1,
\end{align*}
and
\begin{align*}
\|a_ia-a\|_{A(G)}\to 0,\quad\forall a\in A(G).
\end{align*}
\end{thm}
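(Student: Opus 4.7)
The plan is to prove the two directions separately, using a standard characterisation of amenability for locally compact groups as a bridge: $G$ is amenable if and only if there exists a net $(f_i)$ of nonnegative unit vectors in $L^2(G)$, which may be assumed to lie in $C_c(G)$, such that $\|\lambda(s)f_i - f_i\|_2 \to 0$ uniformly for $s$ in every compact subset of $G$ (Reiter's property $(P_2)$, proved for instance in \cite[App.~G]{BedlHVa} in the discrete case and extending to the locally compact setting).

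For the forward direction, given such a net $(f_i)$, I would set $a_i(s) := \langle \lambda(s)f_i, f_i\rangle$, so that $a_i \in A(G)$ as a coefficient of $\lambda$, with $\|a_i\|_{A(G)} \le \|f_i\|_2^2 = 1$. The Cauchy--Schwarz estimate $|a_i(s)-1| = |\langle \lambda(s)f_i - f_i, f_i\rangle| \le \|\lambda(s)f_i - f_i\|_2$ shows that $a_i \to 1$ uniformly on compact sets. Since $A(G)$ is a Banach algebra, the multiplication operators $a \mapsto a_i a$ are contractions on $A(G)$ uniformly in $i$, so it suffices to show $\|a_i a - a\|_{A(G)} \to 0$ on the dense subspace $\{f \ast \check g : f, g \in C_c(G)\}$. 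For such $a$, the pointwise product $a_i \cdot a$ is a coefficient of the tensor representation $\lambda \otimes \lambda$ on $L^2(G \times G)$; Fell's absorption principle $\lambda \otimes \lambda \cong \lambda \otimes I_{L^2(G)}$ (implemented by the unitary $V\xi(s,t) = \xi(s, st)$) expresses $a_i \cdot a$ as a coefficient of $\lambda \otimes I$ with explicit vectors depending linearly on $f_i, f, g$. Writing $a(s) = \langle (\lambda(s) \otimes I)(g \otimes f_i), f \otimes f_i\rangle$ in the same form and subtracting realises $a_i a - a$ as a single coefficient of $\lambda \otimes I$ whose vector norms are controlled by quantities of the type $\|V(f_i \otimes h) - h \otimes f_i\|_2$ for $h \in \{f, g\}$. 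Because $f$ and $g$ have compact support, these quantities tend to zero using the almost-invariance of $f_i$.

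For the reverse direction, assume $(a_i)$ is a net in $A(G)$ with $\|a_i\|_{A(G)} \le 1$ and $\|a_i a - a\|_{A(G)} \to 0$ for all $a \in A(G)$. The inequality $\|\cdot\|_\infty \le \|\cdot\|_{A(G)}$ turns this into uniform convergence $a_i a \to a$ on $G$. By regularity of $A(G)$ as a Banach algebra on $G$ (a classical result of Eymard: for every compact $K \subset G$ there exists $a \in A(G)$ with $a|_K \equiv 1$), one obtains $a_i \to 1$ uniformly on every compact set. Decomposing $a_i = f_i \ast \check g_i$ with $\|f_i\|_2 \|g_i\|_2 \to 1$, the identity $a_i(e) = \int f_i g_i \to 1$ saturates the Cauchy--Schwarz inequality, so $\|f_i - \bar g_i\|_2 \to 0$; after normalising and absorbing phases one may take $g_i = \bar f_i$ and $\|f_i\|_2 = 1$, so that $a_i(s) = \langle \lambda(s)f_i, f_i\rangle$ up to a controlled error. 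The identity $\|\lambda(s)f_i - f_i\|_2^2 = 2 - 2\operatorname{Re} a_i(s)$, combined with the uniform convergence on compacts, then gives Reiter's condition and hence amenability.

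The main technical obstacle lies in the forward direction: passing from pointwise (even uniform-on-compacts) convergence $a_i(s) \to 1$ to convergence $a_i a \to a$ in the much finer $A(G)$-norm. The naive estimate $\|a_i a - a\|_{A(G)} \le \|a_i - 1\|_{B(G)} \|a\|_{A(G)}$ is not available because $1 \notin A(G)$ in the non-compact case and $\|a_i - 1\|_{B(G)}$ does not tend to zero in general. Fell's absorption provides the correct device by re-expressing products of coefficients of $\lambda$ as single coefficients of $\lambda \otimes I$, whose vectors admit explicit norm estimates exploiting the almost-invariance of $(f_i)$.
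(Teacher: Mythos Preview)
The paper does not contain a proof of this theorem: it is stated with a citation to Leptin's original article \cite{Lep}, in keeping with the paper's declared policy of presenting results without proof except for Theorem~\ref{Thm_wa_CBAP}. So there is no ``paper's proof'' to compare against; your outline is essentially the classical argument.

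That said, there is a concrete error in your forward direction. With the unitary $V\xi(s,t)=\xi(s,st)$ (which does intertwine $\lambda\otimes\lambda$ with $\lambda\otimes I$), and with the tensor factors ordered as you wrote them, the quantity $\|V(f_i\otimes h)-h\otimes f_i\|_2$ does \emph{not} tend to $0$. A direct computation gives
\[
\|V(f_i\otimes h)-h\otimes f_i\|_2^2 \;\longrightarrow\; 2\|h\|_2^2-2|\langle h,f_i\rangle|^2,
\]
which has no reason to vanish. The fix is to swap the tensor factors before applying $V$: write $a_i(s)a(s)=\langle(\lambda\otimes\lambda)(s)(g\otimes f_i),\,f\otimes f_i\rangle$, so that after Fell absorption the relevant difference is $V(h\otimes f_i)-h\otimes f_i$. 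Then
\[
\|V(h\otimes f_i)-h\otimes f_i\|_2^2=\int_G|h(s)|^2\,\|\lambda(s^{-1})f_i-f_i\|_2^2\,ds,
\]
which does tend to $0$ for $h\in C_c(G)$ by the uniform almost-invariance of $(f_i)$ on $\operatorname{supp}(h)^{-1}$. With this correction your $A(G)$-norm estimate goes through. The reverse direction via Cauchy--Schwarz saturation and Reiter's $(P_2)$ is fine as sketched.
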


\begin{rmk}
For the reader who is familiar with the characterisation of amenability in terms of F\o{}lner sequences, if $\Gamma$ is an amenable, countable, discrete group, and $(F_n)$ is a F\o{}lner sequence in $\Gamma$, then
\begin{align*}
a_n(s)=\frac{|sF_n\cap F_n|}{|F_n|},\quad\forall s\in\Gamma,
\end{align*}
defines a bounded approximate identity in $A(\Gamma)$. Moreover, $(a_n)$ is positive definite and finitely supported.
\end{rmk}

\subsection{Fourier multipliers}\label{Ssec_Fourier_mult}
Recall that $A(G)$ is an ideal in $B(G)$. In other words, the multiplication by an element of $B(G)$ is a well defined map on $A(G)$. This leads to the following definition. We say that $\varphi:G\to\C$ is a multiplier of $A(G)$ if the linear map
\begin{align*}
m_\varphi: a\in A(G)\longmapsto \varphi a\in A(G)
\end{align*}
is well defined. Here $\varphi a$ means the pointwise product of $\varphi$ and $a$. An application of the closed graph theorem shows that, if $m_\varphi:A(G)\to A(G)$ is well defined, then it is bounded. We define $MA(G)$ as the space of all multipliers of $A(G)$, and we endow it with the norm
\begin{align*}
\|\varphi\|_{MA(G)}=\|m_\varphi\|_{\mathbf{B}(A(G))},\quad\forall \varphi\in MA(G),
\end{align*}
for which it becomes a Banach algebra. Moreover, by (the proof of) \cite[Proposition 2.3.2]{KanLau}, for every $t\in G$ and $\varepsilon>0$, there is a compact neighbourhood $K$ of $t$, and $a\in A(G)$ such that
\begin{align*}
\|a\|_{A(G)}\leq 1+\varepsilon,\qquad 0\leq a\leq 1,\qquad a|_K=1.
\end{align*}
This shows that every $\varphi\in MA(G)$ is necessarily continuous at $t$, and
\begin{align*}
|\varphi(t)|\leq \|m_\varphi a\|_\infty \leq \|m_\varphi a\|_{A(G)} \leq \|\varphi\|_{MA(G)}(1+\varepsilon).
\end{align*}
We conclude that $MA(G)$ is contained in $C_b(G)$ and
\begin{align*}
\|\varphi\|_\infty\leq\|\varphi\|_{MA(G)},\quad\forall \varphi\in MA(G).
\end{align*}

\begin{rmk}
Going back to the point of view of Fourier analysis on $\T$, a Fourier multiplier $\varphi\in MA(\Z)$ defines a linear map
\begin{align*}
\sum_{n\in\Z}\alpha_nz^n \longmapsto \sum_{n\in\Z}\varphi(n)\alpha_nz^n,
\end{align*}
that extends to a bounded map $L^1(\T)\to L^1(\T)$. In other words, $\varphi$ acts on $L^1(\T)$ by multiplying the Fourier coefficients.
\end{rmk}

Now let $M_\varphi:LG\to LG$ be the dual map of $m_\varphi:A(G)\to A(G)$. A simple calculation shows that
\begin{align}\label{mult_LG}
M_\varphi\lambda(f)=\lambda(\varphi f),\quad\forall f\in L^1(G).
\end{align}
Moreover, since it is a dual map, $M_\varphi$ is automatically weak*-weak*-continuous. We say that $\varphi\in MA(G)$ is a completely bounded multiplier of $A(G)$ if $M_\varphi$ is a completely bounded map on $LG$. We define $M_0A(G)$ as the space of all completely bounded multipliers of $A(G)$, and we endow it with the norm
\begin{align*}
\|\varphi\|_{M_0(G)}=\|M_\varphi\|_{\mathrm{cb}},\quad\forall\varphi\in M_0A(G),
\end{align*}
for which it becomes a Banach algebra.

\begin{rmk}
The algebra $M_0A(G)$ may also be defined using the operator space structure of $A(G)$ and its associated completely bounded maps; however, we will not follow that approach here. Instead, we refer the interested reader to \cite[\S 2.5]{Pis2}.
\end{rmk}

The following theorem is an essential piece of this theory. It provides a characterisation of $M_0(G)$ that is very useful for concrete computations. It was proved by Bo\.z{}ejko and Fendler in \cite{BozFen}; see also \cite{Jol}.

\begin{thm}[Bo\.z{}ejko--Fendler]\label{Thm_Boz-Fen}
Let $G$ be a locally compact group, $\varphi:G\to\C$, and $C>0$. The following are equivalent:
\begin{itemize}
\item[(i)] The function $\varphi$ is an element of $M_0A(G)$ with $\|\varphi\|_{M_0A(G)}\leq C$.
\item[(ii)] There is a Hilbert space $\mathcal{H}$, and continuous bounded maps $\xi,\eta:G\to\mathcal{H}$ such that
\begin{align*}
\varphi(s^{-1}t)=\langle\xi(t),\eta(s)\rangle,\quad\forall s,t\in G,
\end{align*}
and
\begin{align*}
\left(\sup_{t\in G}\|\xi(t)\|\right)\left(\sup_{s\in G}\|\eta(s)\|\right)\leq C.
\end{align*}
\end{itemize}
\end{thm}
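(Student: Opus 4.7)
The plan is to recognise this theorem as a Schur multiplier statement in disguise: the kernel $(s,t)\mapsto\varphi(s^{-1}t)$ on $G\times G$ admits a Grothendieck-type factorisation precisely when $\varphi$ acts completely boundedly on $LG$. The argument rests on two pillars, namely the Stinespring/Wittstock factorisation of completely bounded maps into $\mathbf{B}(\mathcal{H})$, together with the classical observation that a kernel of the form $\langle\xi(t),\eta(s)\rangle$ produces a Schur-type operator of cb-norm at most $\sup\|\xi\|\sup\|\eta\|$.

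For the easy direction (ii) $\Rightarrow$ (i), I would define a Schur-type map $\Phi$ on $\mathbf{B}(L^2(G))$ with symbol $\varphi(s^{-1}t)$: writing elements of $\mathbf{B}(L^2(G))$ via their integral kernels $k$, set $\Phi(k)(s,t)=\varphi(s^{-1}t)k(s,t)$. Expanding $\varphi(s^{-1}t)=\langle\xi(t),\eta(s)\rangle$ and factoring $\Phi$ through the ``row'' and ``column'' operators built from $\xi$ and $\eta$ yields $\|\Phi\|_{\mathrm{cb}}\leq\sup\|\xi\|\sup\|\eta\|\leq C$. A direct calculation using \eqref{mult_LG} together with the change of variables in the kernel of $\lambda(f)$ shows that $\Phi$ preserves $LG$ and restricts to $M_\varphi$, hence $\|\varphi\|_{M_0A(G)}\leq C$.

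For the hard direction (i) $\Rightarrow$ (ii), I would apply the Wittstock/Paulsen factorisation to the cb map $M_\varphi$ viewed as a map from $LG$ into $\mathbf{B}(L^2(G))$, obtaining a Hilbert space $K$, a $\ast$-representation $\pi:LG\to\mathbf{B}(K)$ and bounded operators $V,W:L^2(G)\to K$ with $\|V\|\|W\|\leq C$ and $M_\varphi(x)=V^*\pi(x)W$. Composing with $\lambda$ gives a unitary representation $\tilde\pi=\pi\circ\lambda:G\to\mathbf{U}(K)$ satisfying the operator identity $\varphi(s)\lambda(s)=V^*\tilde\pi(s)W$. In the discrete case, applying this identity to $\delta_{t^{-1}}$, pairing with $\delta_{s^{-1}}$, then substituting $s\rightsquigarrow s^{-1}t$ and using $\tilde\pi(s^{-1}t)=\tilde\pi(s)^*\tilde\pi(t)$, yields
\begin{align*}
\varphi(s^{-1}t)=\langle\tilde\pi(t)W\delta_{t^{-1}},\tilde\pi(s)V\delta_{s^{-1}}\rangle,
\end{align*}
so the vectors $\xi(t):=\tilde\pi(t)W\delta_{t^{-1}}$ and $\eta(s):=\tilde\pi(s)V\delta_{s^{-1}}$ are the desired ones, with $\|\xi(t)\|\leq\|W\|$ and $\|\eta(s)\|\leq\|V\|$.

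The main obstacle is the passage to a general locally compact $G$, where $\delta_s\notin L^2(G)$ and the pointwise testing above does not literally make sense. The remedy, carried out carefully in \cite{BozFen} and \cite{Jol}, is to replace $\delta_s$ by integration against an approximate identity concentrated near $s$, produce provisional vector-valued maps from these smeared versions, and then extract the honest $\xi,\eta$ by a disintegration/weak limit argument. Continuity of $\xi$ and $\eta$ comes from the strong continuity of $\tilde\pi$, which is legitimate because $M_\varphi$ is weak*--weak*-continuous (being the dual of $m_\varphi$), so $\pi$ can be chosen normal and $\tilde\pi=\pi\circ\lambda$ is SOT-continuous. This locally compact bookkeeping is the delicate part of the argument, though it involves no new conceptual input beyond the discrete case.
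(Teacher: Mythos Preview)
The paper does not actually prove this theorem: it is stated with references to \cite{BozFen} and \cite{Jol}, in keeping with the policy announced in the introduction that ``most of the results are presented without proof, but all the references are precisely indicated.'' There is therefore nothing in the paper to compare your argument against.

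That said, your sketch is the standard route taken in those references and is essentially correct. One small expository glitch: in the discrete case you write ``applying this identity to $\delta_{t^{-1}}$, pairing with $\delta_{s^{-1}}$, then substituting $s\rightsquigarrow s^{-1}t$,'' but in that order the computation does not close up (the Kronecker delta on the left forces $t=s^2$). The substitution must come first: set $u=s^{-1}t$ in the operator identity $\varphi(u)\lambda(u)=V^*\tilde\pi(u)W$, \emph{then} apply to $\delta_{t^{-1}}$ and pair with $\delta_{s^{-1}}$, which indeed yields your displayed formula $\varphi(s^{-1}t)=\langle\tilde\pi(t)W\delta_{t^{-1}},\tilde\pi(s)V\delta_{s^{-1}}\rangle$. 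The locally compact bookkeeping you describe (approximate identities in place of point masses, normality of $\pi$ from the weak*-continuity of $M_\varphi$) is the right outline of what \cite{Jol} does in detail.
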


\begin{rmk}
The functions described in Theorem \ref{Thm_Boz-Fen}.(ii) are called Herz--Schur multipliers; see Section \ref{Ssec_HS_mult}. Theorem \ref{Thm_Boz-Fen} says that the algebra of completely bounded multipliers of $A(G)$ and the algebra of Herz--Schur multipliers on $G$ coincide isometrically.
\end{rmk}

By Thoerem \ref{Thm_Boz-Fen}, it is quite clear that $B(G)$ is contained in $M_0(G)$. Hence, for every locally compact group, we get contractive inclusions
\begin{align*}
A(G)\hookrightarrow B(G)\hookrightarrow M_0A(G)\hookrightarrow MA(G)\hookrightarrow C_b(G).
\end{align*}
In general, none of these inclusions are surjective. However, for $G=\Z$, we have
\begin{align*}
B(\Z)=M_0A(\Z)=MA(\Z)=\left\{\hat{\mu}\ \mid\ \mu\in M(\T)\right\}.
\end{align*}
As a matter of fact, the equality $MA(G)=B(G)$ characterises amenability; see \cite[Theorem 5.1.10]{KanLau}.

\section{{\bf Weak amenability and the Cowling--Haagerup constant}}\label{Sec_wa}

In this section, we give the definition of weak amenability and the Cowling--Haagerup constant, and we relate them to the approximation properties discussed in Section \ref{Sec_OA_CBAP}. In order to motivate all this, we begin with a few words about amenability.

\subsection{Amenability}
By Theorem \ref{Thm_Lep}, amenability may be seen as an approximation property of the Fourier algebra. It turns out that it can also be characterised in terms of the approximation properties introduced in Section \ref{Sec_OA_CBAP}; see \cite[Theorem 4.2]{Lan} and \cite[Theorem 5.11]{EffLan}.

\begin{thm}[Lance, Effros--Lance]
Let $\Gamma$ be a discrete group. The following are equivalent:
\begin{itemize}
\item[(i)] The group $\Gamma$ is amenable.
\item[(ii)] The $\mathbf{C}^*$-algebra $C_\lambda^*(\Gamma)$ has the CPAP.
\item[(iii)] The von Neumann algebra $L\Gamma$ has the w*-CPAP.
\end{itemize}
\end{thm}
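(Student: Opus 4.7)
My plan is to prove the implications $(i)\Rightarrow(ii)$ and $(i)\Rightarrow(iii)$ by a single explicit construction, and then obtain the converses $(ii)\Rightarrow(i)$ and $(iii)\Rightarrow(i)$ by invoking the classical equivalences between CPAP/w*-CPAP, nuclearity/injectivity, and amenability. The forward direction is where the F\o{}lner structure of an amenable discrete group enters directly; the converses rest on the structure theory of operator algebras.

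For the forward directions, I will use the positive definite, finitely supported functions $\varphi_n(s)=|sF_n\cap F_n|/|F_n|$ attached to a F\o{}lner sequence $(F_n)$, as in the remark after Theorem~\ref{Thm_Lep}. Writing $\varphi_n(s)=|F_n|^{-1}\langle\lambda(s)\mathbf{1}_{F_n},\mathbf{1}_{F_n}\rangle$ exhibits $\varphi_n$ simultaneously as positive definite, normalised by $\varphi_n(e)=1$, and, by the F\o{}lner condition, pointwise convergent to $1$. I then consider the multipliers $M_{\varphi_n}$ defined by \eqref{mult_LG}; the verifications I plan to carry out, in order, are: weak*-weak* continuity (automatic, since $M_{\varphi_n}$ is a dual map); complete positivity (via a standard GNS realisation of $\varphi_n$ as a matrix coefficient of a unitary representation, which compresses to $M_{\varphi_n}$); finite rank (since the range lies in $\operatorname{span}\{\lambda(s):s\in F_nF_n^{-1}\}$, which is finite-dimensional); and $\|M_{\varphi_n}\|_{\mathrm{cb}}=1$, which follows from the first theorem of Section~\ref{Sec_OA_CBAP} since $M_{\varphi_n}(1)=\varphi_n(e)\cdot 1=1$. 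Norm convergence $M_{\varphi_n}\lambda(s)\to\lambda(s)$ on each generator is then immediate from $\varphi_n(s)\to 1$, and the uniform bound together with density of $\C[\Gamma]$ (norm-dense in $C_\lambda^*(\Gamma)$ and weak*-dense in $L\Gamma$) yields CPAP and w*-CPAP respectively.

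The main obstacle lies in the converse directions, where no combinatorial information about $\Gamma$ is given, and one must manufacture a \emph{left-invariant mean} from abstract approximation data. My plan for $(ii)\Rightarrow(i)$ is to use the equivalence between CPAP and nuclearity (cited in Section~\ref{Sec_OA_CBAP} and proved in \cite[\S 4.2]{Pis}), and then invoke Lance's theorem \cite{Lan}: nuclearity of $C_\lambda^*(\Gamma)$ forces the trivial character of $\C[\Gamma]$ to extend continuously to $C_\lambda^*(\Gamma)$, which, by a classical argument of Hulanicki, is equivalent to the existence of an invariant mean on $\ell^\infty(\Gamma)$. For $(iii)\Rightarrow(i)$, the parallel route is via Effros--Lance semidiscreteness \cite{EffLan} combined with Connes' theorem on injective factors: w*-CPAP forces $L\Gamma$ to be injective, which for a group von Neumann algebra of a discrete group is known to be equivalent to amenability. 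I would only sketch these deep ingredients rather than reprove them; the subtle point is that CPAP does not obviously transfer between $C_\lambda^*(\Gamma)$ and $L\Gamma$ (the relevant maps need not be weak*-continuous or preserve $\lambda(\C[\Gamma])$), so $(ii)\Rightarrow(iii)$ and $(iii)\Rightarrow(ii)$ genuinely pass through amenability rather than being direct restriction/extension arguments.
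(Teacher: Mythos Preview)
The paper does not prove this theorem; it merely states it with references to \cite[Theorem 4.2]{Lan} and \cite[Theorem 5.11]{EffLan}, in keeping with its announced policy of proving only Theorem~\ref{Thm_wa_CBAP}. There is therefore no argument in the paper against which to compare your proposal.

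That said, your outline is the standard route and is sound. Two minor remarks. First, you invoke a F\o{}lner \emph{sequence}, which presupposes $\Gamma$ countable; for an arbitrary discrete group the same construction goes through verbatim with a F\o{}lner net, so this is only a phrasing issue. Second, the support of $\varphi_n$ is $F_nF_n^{-1}$ (not $F_n$ itself), which you correctly identify when computing the range of $M_{\varphi_n}$. For the converses, your plan to pass through nuclearity (via the Choi--Effros/Kirchberg equivalence cited in Section~\ref{Sec_OA_CBAP}) and injectivity (via Connes and Effros--Lance) is exactly the architecture of the original papers, and citing rather than reproving those deep ingredients is appropriate here. Your closing observation that $(ii)\Leftrightarrow(iii)$ genuinely routes through $(i)$, rather than by direct restriction or extension of the approximating maps, is a good one and worth keeping.
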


We will see that a similar characterisation holds for weak amenability.

\subsection{Weak amenability}
In order to define weak amenability, and to see that it really is a weakening of amenability, we start with the following corollary of Theorem \ref{Thm_Lep}.

\begin{cor}\label{Cor_Lep}
Let $G$ be an amenable, locally compact group. There exists a net $(a_i)$ in $A(G)$ such that $\|a_i\|_{A(G)}\leq 1$ and such that $a_i$ converges to $1$ uniformly on compact subsets of $G$.
\end{cor}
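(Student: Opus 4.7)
The plan is to take the bounded approximate identity $(a_i)$ in $A(G)$ provided directly by Leptin's theorem (Theorem \ref{Thm_Lep}), and to show that it already satisfies the stronger conclusion about uniform convergence on compact sets. So there is no need to modify the net at all; only the mode of convergence needs to be upgraded from the $A(G)$-norm sense of ``approximate identity'' to uniform convergence on compacts.

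The first ingredient is the inequality $\|\cdot\|_\infty \leq \|\cdot\|_{A(G)}$, which was recorded in Section \ref{Sec_Fourier_alg}. This immediately converts any $A(G)$-norm convergence into uniform convergence on all of $G$. The second ingredient is a localisation device: for every compact subset $K\subseteq G$, we need to produce a single $a_K\in A(G)$ with $a_K\equiv 1$ on $K$. This is standard, and can be done explicitly by setting
\begin{align*}
a_K = \frac{1}{m(V)}\, 1_{KV}\ast\check{1}_V,
\end{align*}
where $V$ is a symmetric, relatively compact open neighbourhood of the identity and $m$ is the Haar measure. Indeed, $1_{KV},\,1_V\in L^2(G)$, so $a_K\in A(G)$ by the description of $A(G)$ recalled in Section \ref{Sec_Fourier_alg}; and a direct change of variables shows $a_K(s)=1$ whenever $s\in K$.

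Given these two ingredients, the argument is short. Let $K\subseteq G$ be compact and let $a_K\in A(G)$ be as above. Since $(a_i)$ is a bounded approximate identity in $A(G)$ by Theorem \ref{Thm_Lep}, we have $\|a_i a_K - a_K\|_{A(G)}\to 0$, and hence
\begin{align*}
\sup_{s\in K}|a_i(s)-1| = \sup_{s\in K}|a_i(s)a_K(s)-a_K(s)| \leq \|a_i a_K - a_K\|_\infty \leq \|a_i a_K - a_K\|_{A(G)} \xrightarrow[i]{} 0.
\end{align*}
Since this holds for every compact $K$, the net $(a_i)$ converges to $1$ uniformly on compact subsets of $G$, with $\|a_i\|_{A(G)}\leq 1$ by the choice coming from Theorem \ref{Thm_Lep}.

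There is no real obstacle here; the only mildly nontrivial point is the construction of the cut-off function $a_K$, but this is classical and can be replaced by any other standard regularity argument for $A(G)$ (e.g.\ invoking \cite[Proposition 2.3.2]{KanLau} as was done for $MA(G)$ earlier in the text). The whole proof is essentially a translation of Leptin's theorem through the inequality $\|\cdot\|_\infty\leq\|\cdot\|_{A(G)}$.
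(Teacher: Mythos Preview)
Your proof is correct and follows essentially the same route as the paper: take the bounded approximate identity from Leptin's theorem, pick a cut-off $a_K\in A(G)$ with $a_K|_K=1$, and use $\|\cdot\|_\infty\leq\|\cdot\|_{A(G)}$ on $a_ia_K-a_K$. The only cosmetic difference is that the paper cites \cite[Proposition 2.3.2]{KanLau} for the existence of $a_K$, whereas you give the explicit construction $a_K=\frac{1}{m(V)}\,1_{KV}\ast\check{1}_V$ (and then note the same reference as an alternative).
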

\begin{proof}
Let $(a_i)$ be the net given by Theorem \ref{Thm_Lep}. Now let $K$ be a compact subset of $G$. By \cite[Proposition 2.3.2]{KanLau}, there is $a\in A(G)$ such that $a|_K=1$. Hence
\begin{align*}
\sup_{t\in K}|a_i(s)-1|\leq\|a_ia-a\|_\infty\leq\|a_ia-a\|_{A(G)}\to 0.
\end{align*}
\end{proof}

We say that a locally compact group $G$ is weakly amenable if there is a constant $C\geq 1$, and a net $(a_i)$ in $A(G)$ such that
\begin{align*}
\|a_i\|_{M_0A(G)}\leq C,
\end{align*}
and such that $a_i$ converges to $1$ uniformly on compact subsets of $G$. The Cowling--Haagerup constant $\boldsymbol\Lambda(G)$ is defined as the infimum of all $C\geq 1$ such that the condition above holds. If $G$ is not weakly amenable, we simply set $\boldsymbol\Lambda(G)=\infty$.

We immediately obtain the following as a consequence of Corollary \ref{Cor_Lep}.

\begin{prop}\label{Prop_amen_wa}
Let $G$ be an amenable group. Then $G$ is weakly amenable with $\boldsymbol\Lambda(G)=1$.
\end{prop}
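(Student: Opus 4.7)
The plan is to directly transfer the approximate identity from Corollary \ref{Cor_Lep} into the completely bounded multiplier algebra, using the contractive inclusion $A(G) \hookrightarrow M_0A(G)$ that was noted in Section \ref{Ssec_Fourier_mult}.

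More concretely, I would start by invoking Corollary \ref{Cor_Lep} to obtain a net $(a_i)$ in $A(G)$ with $\|a_i\|_{A(G)} \leq 1$ and $a_i \to 1$ uniformly on compact subsets of $G$. The only thing to verify is that this net is also uniformly bounded in the norm $\|\cdot\|_{M_0A(G)}$ with the constant $1$. For this I would appeal to the chain of contractive inclusions
\begin{align*}
A(G)\hookrightarrow B(G)\hookrightarrow M_0A(G),
\end{align*}
which was established right after Theorem \ref{Thm_Boz-Fen}; the inclusion $B(G) \hookrightarrow M_0A(G)$ follows from the Bo\.zejko--Fendler characterisation (a coefficient $\varphi(s)=\langle\pi(s)\xi,\eta\rangle$ of a unitary representation satisfies condition (ii) of Theorem \ref{Thm_Boz-Fen} via the maps $t \mapsto \pi(t)\xi$ and $s \mapsto \pi(s)\eta$), while the inclusion $A(G) \hookrightarrow B(G)$ is contractive by the definition of the norm on $A(G)$ as a restriction of the $B(G)$-norm.

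Combining these inclusions yields $\|a_i\|_{M_0A(G)} \leq \|a_i\|_{A(G)} \leq 1$ for every $i$. Together with the fact that $a_i \to 1$ uniformly on compact sets, the net $(a_i)$ satisfies the defining condition of weak amenability with constant $C = 1$, so $\boldsymbol\Lambda(G) \leq 1$. Since by definition $\boldsymbol\Lambda(G) \geq 1$ whenever $G$ is weakly amenable, we conclude $\boldsymbol\Lambda(G) = 1$.

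There is no real obstacle here: the proposition is essentially a bookkeeping exercise once Corollary \ref{Cor_Lep} and the contractive inclusion $A(G) \hookrightarrow M_0A(G)$ are in hand. The only point deserving a brief remark is that the approximation required for weak amenability is uniform convergence on compact sets, which is weaker than $\|\cdot\|_{A(G)}$-convergence, so the net supplied by Corollary \ref{Cor_Lep} is directly of the required type without any further modification.
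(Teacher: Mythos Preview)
Your proposal is correct and is exactly the argument the paper has in mind: the paper states the proposition as an immediate consequence of Corollary \ref{Cor_Lep}, relying on the contractive inclusion $A(G)\hookrightarrow M_0A(G)$ established after Theorem \ref{Thm_Boz-Fen}, which is precisely what you spell out.
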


From the proposition above, one might feel tempted to suggest that the condition $\boldsymbol\Lambda(G)=1$ perhaps characterises amenability; however, we will see that this is far from being the case. As a matter of fact, free groups, which are the prototypical examples of nonamenable groups, satisfy $\boldsymbol\Lambda(\F_n)=1$; see e.g. Theorem \ref{Thm_Szwarc}. Observe that, in this case, the approximate identity $(\varphi_i)$ will be unbounded in the norm of $A(\F_n)$, but it will still satisfy $\|\varphi_i\|_{M_0A(\F_n)}\leq 1$.

As already mentioned, the main motivation behind the definition of weak amenability is the study of operator algebras. The link is given by the following theorem; see \cite[Proposition 6.1]{CowHaa} and \cite[Theorem 2.6]{Haa2}. We include its proof since it connects everything that we have discussed so far, and we believe it will help the reader get a more global understanding of these topics.

\begin{thm}[Haagerup, Cowling--Haagerup]\label{Thm_wa_CBAP}
Let $\Gamma$ be a discrete group. The following are equivalent:
\begin{itemize}
\item[(i)] The group $\Gamma$ is weakly amenable.
\item[(ii)] The $\mathbf{C}^*$-algebra $C_\lambda^*(\Gamma)$ has the CBAP.
\item[(iii)] The von Neumann algebra $L\Gamma$ has the w*-CBAP.
\end{itemize}
Moreover,
\begin{align*}
\boldsymbol\Lambda(\Gamma)=\boldsymbol\Lambda_{\mathrm{cb}}(C_\lambda^*(\Gamma))=\boldsymbol\Lambda_\mathrm{w^*cb}(L\Gamma).
\end{align*}
\end{thm}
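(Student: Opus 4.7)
My plan is to establish $(\mathrm{i})\Leftrightarrow(\mathrm{ii})$ and $(\mathrm{i})\Leftrightarrow(\mathrm{iii})$ with matching constants in each direction, so that the four resulting inequalities collapse to equality of the three Cowling--Haagerup-type constants. The pivot on both sides is the correspondence between cb maps on the group operator algebras and completely bounded Fourier multipliers in $M_0A(\Gamma)$, embodied by \eqref{mult_LG} and Theorem~\ref{Thm_Boz-Fen}.

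For the forward implications $(\mathrm{i})\Rightarrow(\mathrm{ii}),(\mathrm{iii})$, I would start from a net $(a_i)\subset A(\Gamma)$ witnessing $\boldsymbol\Lambda(\Gamma)\leq C$ (on a discrete group, uniform convergence on compact sets reduces to pointwise convergence). Using the density of $\mathbb{C}[\Gamma]$ in $A(\Gamma)$ together with the contractive inclusion $A(\Gamma)\hookrightarrow M_0A(\Gamma)$, one replaces each $a_i$ by a finitely supported $\tilde a_i$ with $\|\tilde a_i\|_{M_0A(\Gamma)}\leq C+o(1)$ and $\tilde a_i(g)\to 1$ pointwise. The associated multipliers $M_{\tilde a_i}$ defined by \eqref{mult_LG} are then finite rank (image in $\operatorname{span}\{\lambda(g):g\in\operatorname{supp}\tilde a_i\}$), have cb-norm $\|\tilde a_i\|_{M_0A(\Gamma)}$, and restrict to cb maps on $C_\lambda^*(\Gamma)$. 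Strong convergence $M_{\tilde a_i}x\to x$ on $C_\lambda^*(\Gamma)$ follows on $\lambda(\mathbb{C}[\Gamma])$ by pointwise convergence of $\tilde a_i$ and extends by density plus the uniform cb-bound. For the w*-CBAP, w*-continuity of $M_{\tilde a_i}$ is automatic (it is a dual map), and w*-convergence reduces via the duality $\langle M_{\tilde a_i}x,a\rangle=\langle x,\tilde a_i a\rangle$ to showing $\tilde a_i a\to a$ in $A(\Gamma)$-norm, which is a short density argument (finitely supported $a$ first, where $A$- and $\ell^\infty$-norms are equivalent).

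The reverse implications rest on the following lemma, which is the technical heart of the theorem: for any cb map $T$ on $C_\lambda^*(\Gamma)$ or $L\Gamma$, the function
\[
\psi_T(g)=\tau\bigl(T(\lambda(g))\lambda(g^{-1})\bigr),\qquad g\in\Gamma,
\]
satisfies $\psi_T\in M_0A(\Gamma)$ with $\|\psi_T\|_{M_0A(\Gamma)}\leq\|T\|_{\mathrm{cb}}$, and lies in $A(\Gamma)$ whenever $T$ has finite rank. For the norm bound I would invoke Wittstock's factorisation $T(x)=V^*\pi(x)W$, with $\pi$ a $*$-representation on some Hilbert space $\mathcal{K}$ and $\|V\|\|W\|=\|T\|_{\mathrm{cb}}$, then use the identity $\tau(y\lambda(g^{-1}))=\langle y\delta_h,\delta_{gh}\rangle$ (valid for any $h\in\Gamma$, since $y\in L\Gamma$ commutes with the right regular representation) with $g=s^{-1}t$, $h=t^{-1}$ to obtain the Herz--Schur factorisation $\psi_T(s^{-1}t)=\langle\xi(t),\eta(s)\rangle_{\mathcal K}$ with $\xi(t)=\pi(\lambda(t))W\delta_{t^{-1}}$, $\eta(s)=\pi(\lambda(s))V\delta_{s^{-1}}$; Theorem~\ref{Thm_Boz-Fen} then delivers the bound. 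For the finite-rank claim, write $T=\sum_j\phi_j(\cdot)y_j$, so $\psi_T(g)=\sum_j\phi_j(\lambda(g))\hat{y_j}(g)$: each $\phi_j\circ\lambda$ is a coefficient of a unitary representation (in $B(\Gamma)$, via GNS in the $C_\lambda^*$-case, and trivially in $A(\Gamma)$ in the w*-continuous case), each $\hat{y_j}$ equals the $\ell^2$-function $y_j\delta_e\in\ell^2(\Gamma)\subset A(\Gamma)$, and since $A(\Gamma)$ is an ideal in $B(\Gamma)$, $\psi_T\in A(\Gamma)$. Applied to the net witnessing $(\mathrm{ii})$ or $(\mathrm{iii})$, this yields $\psi_{T_i}\in A(\Gamma)$ with $\|\psi_{T_i}\|_{M_0A(\Gamma)}\leq C$; pointwise convergence $\psi_{T_i}(g)\to 1$ follows because $y\mapsto\tau(y\lambda(g^{-1}))$ equals the functional $\langle\,\cdot\,,\delta_g\rangle$, which is norm-continuous on $C_\lambda^*(\Gamma)$ and w*-continuous on $L\Gamma$ (since $\delta_g\in A(\Gamma)=(L\Gamma)_*$).

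The main obstacle is the multiplier lemma: extracting from the Wittstock data the precise Hilbert space vectors $\xi(t),\eta(s)$ that realise the Herz--Schur factorisation with the sharp constant $\|V\|\|W\|$. Everything else is relatively routine --- the density-based approximation, the extension of strong/w*-convergence, and the algebraic verification that finite rank of $T$ forces $\psi_T\in A(\Gamma)$. Closing the four implications produces matching inequalities in both directions, forcing $\boldsymbol\Lambda(\Gamma)=\boldsymbol\Lambda_{\mathrm{cb}}(C_\lambda^*(\Gamma))=\boldsymbol\Lambda_{\mathrm{w^*cb}}(L\Gamma)$.
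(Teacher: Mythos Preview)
Your proof is correct and follows essentially the same route as the paper: the forward direction $(\mathrm{i})\Rightarrow(\mathrm{ii}),(\mathrm{iii})$ via finitely supported multipliers producing finite-rank maps $M_{\varphi_i}$, and the reverse direction via the averaged function $\psi_T(g)=\tau(T(\lambda(g))\lambda(g^{-1}))=\tau(\lambda(g)^*T(\lambda(g)))$. The only difference is that you sketch an explicit proof of the key multiplier lemma ($\|\psi_T\|_{M_0A(\Gamma)}\leq\|T\|_{\mathrm{cb}}$ and $\psi_T\in A(\Gamma)$ for finite-rank $T$) via Wittstock factorisation and Theorem~\ref{Thm_Boz-Fen}, whereas the paper simply invokes \cite[Lemma~2.5]{Haa2} as a black box.
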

\begin{proof}
Assume first that $\Gamma$ is weakly amenable, let $C>\boldsymbol\Lambda(\Gamma)$, and let $(\varphi_i)$ be an approximate identity in $A(\Gamma)$ such that $\|\varphi_i\|_{M_0A(\Gamma)}\leq C$. Since $\C[\Gamma]$ is dense in $A(\Gamma)$, and the inclusion $A(\Gamma)\hookrightarrow M_0A(\Gamma)$ is contractive, we may assume that $\varphi_i$ is finitely supported for all $i$. Now let $M_{\varphi_i}:L\Gamma\to L\Gamma$ be the weak*-continuous completely bounded map defined as in \eqref{mult_LG}. By definition
\begin{align*}
\|M_{\varphi_i}\|_{\mathrm{cb}}=\|\varphi_i\|_{M_0A(\Gamma)}\leq C.
\end{align*}
Moreover, since $\varphi_i$ is finitely supported, $M_{\varphi_i}$ has finite rank. More precisely, the range of $M_{\varphi_i}$ is the linear span of
\begin{align*}
\left\{\lambda(s)\ \mid\ s\in\operatorname{supp}(\varphi_i)\right\}.
\end{align*}
On the other hand, again by density,
\begin{align*}
\|\varphi_ia-a\|_{A(\Gamma)}\to 0,\quad\forall a\in A(\Gamma).
\end{align*}
Hence, for every $x\in L\Gamma$ and every $a\in A(\Gamma)$,
\begin{align*}
|\langle M_{\varphi_i}x-x,a\rangle|=|\langle x,(\varphi_i-1)a\rangle|\leq \|x\|_{L\Gamma}\|\varphi_ia-a\|_{A(\Gamma)}\to 0,
\end{align*}
which shows that $L\Gamma$ has the w*-CBAP and $\boldsymbol\Lambda_\mathrm{w^*cb}(L\Gamma)\leq C$. Now observe that, since $M_{\varphi_i}$ is a bounded map on $L\Gamma$ such that $M_{\varphi_i}\lambda(\ell^1(\Gamma))\subseteq \lambda(\ell^1(\Gamma))$, it restricts to a bounded map
\begin{align*}
\overline{M}_{\varphi_i}:C_\lambda^*(\Gamma)\to C_\lambda^*(\Gamma),
\end{align*}
which has finite rank too. Now let $x\in C_\lambda^*(\Gamma)$, $\varepsilon>0$, and $f\in\C[\Gamma]$ such that
\begin{align*}
\|x-\lambda(f)\|<\varepsilon.
\end{align*}
We have
\begin{align*}
\|\overline{M}_{\varphi_i}x-x\| &\leq \|\overline{M}_{\varphi_i}x-\overline{M}_{\varphi_i}\lambda(f)\| 
+ \|\overline{M}_{\varphi_i}\lambda(f)-\lambda(f)\| + \|\lambda(f)-x\|\\
&\leq (C+1)\varepsilon + \|\overline{M}_{\varphi_i}\lambda(f)-\lambda(f)\|.
\end{align*}
Since $\varphi_i$ converges to $1$ uniformly on the support of $f$,
\begin{align*}
\|\overline{M}_{\varphi_i}\lambda(f)-\lambda(f)\|&\leq \sum_{t\in\mathrm{supp}(f)}|\varphi_i(t)-1| |f(t)| \|\lambda(t)\|\\
&\leq \left(\sup_{t\in\mathrm{supp}(f)}|\varphi_i(t)-1|\right)\|f\|_1 \to 0.
\end{align*}
And since $\varepsilon$ was arbitrary, we conclude that
\begin{align*}
\|\overline{M}_{\varphi_i}x-x\|\to 0,\quad\forall x\in C_\lambda^*(\Gamma),
\end{align*}
which shows that $C_\lambda^*(\Gamma)$ has the CBAP and $\boldsymbol\Lambda_{\mathrm{cb}}(C_\lambda^*(\Gamma))\leq C$. We conclude that
\begin{align*}
\boldsymbol\Lambda_\mathrm{w^*cb}(L\Gamma)\leq \boldsymbol\Lambda(\Gamma)\qquad\text{and}\qquad \boldsymbol\Lambda_{\mathrm{cb}}(C_\lambda^*(\Gamma))\leq \boldsymbol\Lambda(\Gamma).
\end{align*}
Now assume that $C_\lambda^*(\Gamma)$ has the CBAP and take $C>\boldsymbol\Lambda_{\mathrm{cb}}(C_\lambda^*(\Gamma))$. Let $(T_i)$ be a net of finite rank maps on $C_\lambda^*(\Gamma)$ such that $\|T_i\|_{\mathrm{cb}}\leq C$ and
\begin{align*}
\|T_i x-x\|\to 0,\quad\forall x\in C_\lambda^*(\Gamma).
\end{align*}
Let $\tau\in C_\lambda^*(\Gamma)^*$ be defined by
\begin{align*}
\tau(x)=\langle x\delta_e,\delta_e\rangle,\quad\forall x\in C_\lambda^*(\Gamma),
\end{align*}
where $\delta_e\in\ell^2(\Gamma)$ denotes the Dirac mass at the identity element $e\in\Gamma$. Observe that
\begin{align*}
\tau(\lambda(f))=f(e),\quad\forall f\in\ell^1(\Gamma).
\end{align*}
For each $i$, we define $\varphi_i:\Gamma\to\C$ by
\begin{align*}
\varphi_i(t)=\tau(\lambda(t)^*T_i\lambda(t)),\quad\forall t\in\Gamma.
\end{align*}
Then, by \cite[Lemma 2.5]{Haa2}, $\varphi_i$ belongs to $\ell^2(\Gamma)\subseteq A(\Gamma)$, and
\begin{align*}
\|\varphi_i\|_{M_0A(\Gamma)}\leq \|T_i\|_{\mathrm{cb}}\leq C.
\end{align*}
Moreover, for every $t\in\Gamma$,
\begin{align*}
\varphi_i(t)=\langle T_i\delta_t,\delta_t\rangle\to 1,
\end{align*}
which means that $\varphi_i\to 1$ uniformly on finite sets. Therefore $\Gamma$ is weakly amenable and $\boldsymbol\Lambda(\Gamma)\leq C$. This gives us the implication (ii)$\implies$(i). The proof of (iii)$\implies$(i) follows analogously. We get
\begin{align*}
\boldsymbol\Lambda(\Gamma)\leq\boldsymbol\Lambda_{\mathrm{cb}}(C_\lambda^*(\Gamma)) \qquad\text{and}\qquad \boldsymbol\Lambda(\Gamma)\leq\boldsymbol\Lambda_\mathrm{w^*cb}(L\Gamma).
\end{align*}
\end{proof}

\subsection{Stability properties}
We now discuss the stability of weak amenability under basic group constructions and relations, as well as the behaviour of the Cowling--Haagerup constant. All the results presented here and their proofs can be found in \cite{CowHaa}.

\begin{prop}
Let $G$ be a locally compact group, and let $H$ be a closed subgroup of $G$. Then
\begin{align*}
\boldsymbol\Lambda(H)\leq\boldsymbol\Lambda(G).
\end{align*}
\end{prop}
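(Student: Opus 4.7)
The plan is to restrict a witnessing net on $G$ directly to $H$. If $\boldsymbol\Lambda(G) = \infty$ the inequality is vacuous, so I would assume $\boldsymbol\Lambda(G) < \infty$, fix $C > \boldsymbol\Lambda(G)$, and pick a net $(\varphi_i)$ in $A(G)$ with $\|\varphi_i\|_{M_0A(G)} \leq C$ converging to $1$ uniformly on compact subsets of $G$. The goal is to show that the restricted net $(\varphi_i|_H)$ witnesses $\boldsymbol\Lambda(H) \leq C$.

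The completely bounded multiplier bound is immediate from Theorem~\ref{Thm_Boz-Fen}: for each $i$, choose a Hilbert space $\mathcal{H}_i$ and continuous bounded maps $\xi_i, \eta_i : G \to \mathcal{H}_i$ with
\begin{align*}
\varphi_i(s^{-1}t) = \langle \xi_i(t), \eta_i(s)\rangle, \qquad \forall s,t \in G,
\end{align*}
and $(\sup \|\xi_i\|)(\sup \|\eta_i\|) \leq C$. The restrictions $\xi_i|_H, \eta_i|_H$ are again continuous bounded $\mathcal{H}_i$-valued maps and they provide precisely the same factorisation of $\varphi_i|_H$ when $s, t \in H$. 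Another application of Theorem~\ref{Thm_Boz-Fen} then yields $\varphi_i|_H \in M_0A(H)$ with $\|\varphi_i|_H\|_{M_0A(H)} \leq C$. Uniform convergence to $1$ on compacta of $H$ is also inherited for free, since a closed subgroup $H$ carries the subspace topology, so every compact subset of $H$ is a compact subset of $G$.

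The main obstacle is to verify that $\varphi_i|_H$ actually lies in $A(H)$, rather than merely in $M_0A(H)$; this is what the definition of weak amenability demands, and it is the only step not formally set up in the excerpt. I would invoke Herz's restriction theorem, according to which, for any closed subgroup $H$ of a locally compact group $G$, the restriction map is a contractive (and in fact surjective) linear map $A(G) \to A(H)$. Applied to each $\varphi_i$, this gives $\varphi_i|_H \in A(H)$ automatically.

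Combining these three ingredients, the net $(\varphi_i|_H)$ meets every requirement in the definition of weak amenability for $H$ with constant $C$, so $\boldsymbol\Lambda(H) \leq C$. Letting $C \downarrow \boldsymbol\Lambda(G)$ yields $\boldsymbol\Lambda(H) \leq \boldsymbol\Lambda(G)$. No additional ingenuity is required beyond quoting Bo\.z{}ejko--Fendler for the multiplier bound and Herz's theorem for the $A(H)$-membership.
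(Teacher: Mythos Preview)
Your argument is correct and is precisely the standard one. The paper does not actually supply a proof of this proposition; it simply refers the reader to \cite{CowHaa}, where the argument is exactly the restriction argument you outline: restrict the witnessing net to $H$, use that restriction sends $A(G)$ into $A(H)$ (Herz's theorem), and note that the $M_0A$-norm does not increase under restriction, which is immediate from the Bo\.z{}ejko--Fendler characterisation.
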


\begin{prop}
Let $G$ and $H$ be locally compact groups. Then
\begin{align*}
\boldsymbol\Lambda(G\times H)=\boldsymbol\Lambda(G)\boldsymbol\Lambda(H).
\end{align*}
\end{prop}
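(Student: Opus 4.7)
The plan is to split the equality into the two inequalities $\boldsymbol\Lambda(G\times H) \leq \boldsymbol\Lambda(G)\boldsymbol\Lambda(H)$ and $\boldsymbol\Lambda(G\times H) \geq \boldsymbol\Lambda(G)\boldsymbol\Lambda(H)$, and to handle them separately. The first is a direct construction from tensor products of approximations, while the second is genuinely subtler: the preceding proposition on closed subgroups only yields $\max\{\boldsymbol\Lambda(G),\boldsymbol\Lambda(H)\} \leq \boldsymbol\Lambda(G\times H)$, so a multiplicative lower bound demands a different mechanism.

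For the upper bound, I would fix $\varepsilon > 0$ and choose nets $(a_i)_{i\in I}\subseteq A(G)$ and $(b_j)_{j\in J}\subseteq A(H)$ converging to $1$ uniformly on compact subsets, with $\|a_i\|_{M_0A(G)} \leq \boldsymbol\Lambda(G)+\varepsilon$ and $\|b_j\|_{M_0A(H)} \leq \boldsymbol\Lambda(H)+\varepsilon$. I then form the product net $c_{(i,j)}(s,t):=a_i(s)b_j(t)$ on $G\times H$ along $I\times J$. That $c_{(i,j)}\in A(G\times H)$ is immediate from writing $a_i=f_i\ast\check{g}_i$ and $b_j=f'_j\ast\check{g}'_j$ with $L^2$-factors, since then $c_{(i,j)}=(f_i\otimes f'_j)\ast(\check{g}_i\otimes\check{g}'_j)$. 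Applying Theorem \ref{Thm_Boz-Fen} to $a_i$ and $b_j$ produces Hilbert spaces $\mathcal{H}_i,\mathcal{H}'_j$ and bounded continuous maps $\xi_i,\eta_i\colon G\to\mathcal{H}_i$ and $\xi'_j,\eta'_j\colon H\to\mathcal{H}'_j$ that implement them; the tensor-product maps $\xi_i\otimes\xi'_j$ and $\eta_i\otimes\eta'_j$ from $G\times H$ to $\mathcal{H}_i\otimes\mathcal{H}'_j$ then realise $c_{(i,j)}$ as a Herz--Schur multiplier of norm at most $(\boldsymbol\Lambda(G)+\varepsilon)(\boldsymbol\Lambda(H)+\varepsilon)$. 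Any compact subset of $G\times H$ is contained in a product $K\times L$ of compact sets, and there the estimate $|c_{(i,j)}(s,t)-1|\leq \|a_i\|_\infty|b_j(t)-1|+|a_i(s)-1|$ together with $\|a_i\|_\infty\leq\|a_i\|_{M_0A(G)}$ yields uniform convergence to $1$. Letting $\varepsilon\downarrow 0$ gives the upper bound.

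For the lower bound, my preferred route is operator-algebraic. In the discrete case, Theorem \ref{Thm_wa_CBAP} reduces the question to proving $\boldsymbol\Lambda_{\mathrm{cb}}(C_\lambda^*(\Gamma\times\Lambda)) \geq \boldsymbol\Lambda_{\mathrm{cb}}(C_\lambda^*(\Gamma))\,\boldsymbol\Lambda_{\mathrm{cb}}(C_\lambda^*(\Lambda))$, which, combined with the canonical identification $C_\lambda^*(\Gamma\times\Lambda)\cong C_\lambda^*(\Gamma)\otimes_{\min}C_\lambda^*(\Lambda)$, would follow from a multiplicativity statement for the CBAP constant under minimal $\mathbf{C}^*$-tensor products. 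For general locally compact $G$ and $H$, one can attempt a direct argument based on the Herz--Schur description of Theorem \ref{Thm_Boz-Fen}: given a near-optimal approximation $(c_k)$ on $G\times H$ and an optimal approximation $(b_j)$ on $H$, slice the Hilbert-space data of $c_k$ against vectors coming from $b_j$ to manufacture an approximation of $1$ on $G$ whose norm is controlled by $\boldsymbol\Lambda(G\times H)/\boldsymbol\Lambda(H)$.

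The main obstacle is precisely this lower bound. Restriction to the closed subgroups $G\times\{e_H\}$ and $\{e_G\}\times H$ only recovers $\max\{\boldsymbol\Lambda(G),\boldsymbol\Lambda(H)\}$, so one needs to detect tensor-factor structure inside a generic element of $M_0A(G\times H)$; this is exactly the step where the theory of tensor products of operator spaces, and the deeper machinery behind Theorem \ref{Thm_wa_CBAP}, enter the picture.
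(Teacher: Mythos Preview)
The paper itself does not prove this proposition; it is stated alongside the other stability results and attributed wholesale to \cite{CowHaa}, so there is no in-text argument to compare your proposal against.

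On its own merits: your upper bound $\boldsymbol\Lambda(G\times H)\leq\boldsymbol\Lambda(G)\boldsymbol\Lambda(H)$ is correct and complete. Tensoring the two approximate identities and reading off the multiplier norm from Theorem~\ref{Thm_Boz-Fen} is exactly the standard argument.

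The lower bound, however, is not proved in your proposal, and you say as much yourself in the final paragraph. Both routes you outline remain sketches. The discrete reduction via Theorem~\ref{Thm_wa_CBAP} does not actually simplify the problem: the inequality $\boldsymbol\Lambda_{\mathrm{cb}}(\mathcal{A}\otimes_{\min}\mathcal{B})\geq\boldsymbol\Lambda_{\mathrm{cb}}(\mathcal{A})\,\boldsymbol\Lambda_{\mathrm{cb}}(\mathcal{B})$ is not a standard black-box fact for arbitrary $\mathbf{C}^*$-algebras, so you are trading one unproved multiplicativity statement for another of comparable depth --- and the non-discrete case would remain regardless. Your slicing idea is closer in spirit to what is actually done, but the phrase ``slice the Hilbert-space data of $c_k$ against vectors coming from $b_j$'' is not yet an argument: one must specify the slicing map precisely, verify that it lands in $A(G)$ rather than merely in $M_0A(G)$, and control its norm in the right multiplicative way. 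That is where the real content lies. The proof in \cite{CowHaa} goes through the predual $Q(G)$ of $M_0A(G)$ (the space appearing in Section~\ref{Sec_AP_HK} of this survey) and the exact multiplicativity $\|\varphi\otimes\psi\|_{M_0A(G\times H)}=\|\varphi\|_{M_0A(G)}\|\psi\|_{M_0A(H)}$, whose nontrivial direction uses that the cb-norm is multiplicative on tensor products of normal completely bounded maps; you should consult that source directly for the details.
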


\begin{prop}
Let $G$ be a locally compact group, and let $K$ be a compact normal subgroup of $G$. Then
\begin{align*}
\boldsymbol\Lambda(G/K)=\boldsymbol\Lambda(G).
\end{align*}
\end{prop}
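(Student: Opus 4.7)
The plan is to establish both inequalities separately, in each case exploiting compactness of $K$ through pullback along the quotient $\pi:G\to G/K$ and through averaging. Let $dk$ denote the normalized Haar measure on $K$, and define
\begin{align*}
P_K\varphi(s)=\int_K\int_K\varphi(k_1 s k_2)\,dk_1\,dk_2,\quad s\in G.
\end{align*}
Normality of $K$ ensures that $P_K\varphi$ is $K$-bi-invariant, hence descends to a function $\tilde\varphi$ on $G/K$. The two structural inputs I intend to use are: first, that the pullback $\tilde\varphi\mapsto\tilde\varphi\circ\pi$ isometrically embeds $A(G/K)$ into $A(G)$ and $M_0A(G/K)$ into $M_0A(G)$, with images the respective $K$-bi-invariant subspaces; second, that $P_K$ is a contractive projection on both $A(G)$ and $M_0A(G)$ onto those same subspaces.

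For $\boldsymbol\Lambda(G/K)\le\boldsymbol\Lambda(G)$, I would take a net $(\varphi_i)$ in $A(G)$ with $\|\varphi_i\|_{M_0A(G)}\le C>\boldsymbol\Lambda(G)$ converging to $1$ uniformly on compact sets, form $P_K\varphi_i\in A(G)$, and let $\tilde\varphi_i\in A(G/K)$ be the descended function. The structural facts give $\|\tilde\varphi_i\|_{M_0A(G/K)}\le C$. For a compact $L\subseteq G/K$, the set $\pi^{-1}(L)$ is compact in $G$ (as $K$ is compact) and already $K$-bi-invariant, so $\varphi_i\to 1$ uniformly there; since $P_K 1=1$, this yields $\tilde\varphi_i\to 1$ uniformly on $L$. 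For the reverse inequality, I would take a net $(\tilde\varphi_j)$ in $A(G/K)$ with $\|\tilde\varphi_j\|_{M_0A(G/K)}\le C'>\boldsymbol\Lambda(G/K)$, and set $\varphi_j=\tilde\varphi_j\circ\pi$. The pullback fact gives $\varphi_j\in A(G)$ with $\|\varphi_j\|_{M_0A(G)}\le C'$. For a compact $M\subseteq G$, the image $\pi(M)$ is compact in $G/K$, and uniform convergence of $\tilde\varphi_j$ to $1$ on $\pi(M)$ immediately implies the same for $\varphi_j$ on $M$.

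The main obstacle will be justifying the Fourier-algebra versions of the two structural facts, namely that the pullback of an element of $A(G/K)$ actually lies in $A(G)$ (rather than merely in $M_0A(G)$), and that $P_K$ preserves $A(G)$. The $M_0A$ versions fall out cleanly from the Herz--Schur characterization of Theorem \ref{Thm_Boz-Fen}: a representation $\tilde\varphi(\bar s^{-1}\bar t)=\langle\tilde\xi(\bar t),\tilde\eta(\bar s)\rangle$ on $G/K$ pulls back along $\pi$ without any change in the norm estimate, and for the averaging one rewrites $\varphi(k_1 s^{-1}t k_2)=\varphi\bigl((sk_1^{-1})^{-1}(tk_2)\bigr)$ using normality and then averages the vectors $\xi(\cdot k), \eta(\cdot k)$ over $K$ to produce a Herz--Schur representation of $P_K\varphi$ with no increase in norm. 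The analogous statement at the level of $A(G)$ is a classical result of Eymard identifying $A(G/K)$ with the closed $K$-bi-invariant subalgebra of $A(G)$, which crucially uses compactness of $K$; I would simply invoke it. Contractivity of $P_K$ on $A(G)$ then follows from writing $P_K$ as a Bochner integral of left and right translations, each of which is an isometry of $A(G)$.
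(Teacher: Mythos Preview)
Your proposal is correct and follows the standard route. The paper itself does not prove this proposition; it simply refers the reader to \cite{CowHaa}, and the argument there is precisely the one you outline: pull back along $\pi:G\to G/K$ for one inequality, and average over $K$ and descend for the other, using that $\pi$ is proper when $K$ is compact and invoking Eymard's identification of $A(G/K)$ with the $K$-bi-invariant part of $A(G)$. One small remark: the identity $\varphi(k_1 s^{-1}t k_2)=\varphi\bigl((sk_1^{-1})^{-1}(tk_2)\bigr)$ is purely algebraic and does not require normality; normality enters only to ensure that $K$-right-invariance coincides with $K$-bi-invariance, so that the averaged function descends to $G/K$.
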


\begin{prop}
Let $G$ be a locally compact group, and let $(G_i)$ be the net of compactly generated open subgroups of $G$ ordered by inclusion. Then
\begin{align*}
\boldsymbol\Lambda(G)=\lim_i\boldsymbol\Lambda(G_i).
\end{align*}
\end{prop}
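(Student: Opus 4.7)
The plan is to establish both inequalities $\lim_i \boldsymbol\Lambda(G_i) \leq \boldsymbol\Lambda(G)$ and $\boldsymbol\Lambda(G) \leq \lim_i \boldsymbol\Lambda(G_i)$. Every $G_i$ is an open subgroup of $G$ and hence also closed, since the complement of an open subgroup is a union of open cosets. The previous (subgroup) proposition then yields $\boldsymbol\Lambda(G_i) \leq \boldsymbol\Lambda(G)$, and applied to $G_i \subseteq G_j$ it shows that the net $(\boldsymbol\Lambda(G_i))_i$ is monotone increasing along the directed set of compactly generated open subgroups (which is directed by inclusion, as the subgroup generated by $G_i \cup G_j$ is again compactly generated and open). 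Hence $\lim_i \boldsymbol\Lambda(G_i) = \sup_i \boldsymbol\Lambda(G_i) \leq \boldsymbol\Lambda(G)$.

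For the nontrivial direction, I would fix $C > \sup_i \boldsymbol\Lambda(G_i)$ and, for each $i$, pick a net $(a^i_j)_j$ in $A(G_i)$ with $\|a^i_j\|_{M_0A(G_i)} \leq C$ converging to $1$ uniformly on compact subsets of $G_i$. The heart of the argument is a norm-preserving extension lemma: for any open subgroup $H \leq G$, the extension by $0$ maps $A(H)$ isometrically into $A(G)$ and $M_0A(H)$ isometrically into $M_0A(G)$. For $A(H)$ this follows from the factorisation $A(H) = \{f \ast \check{g} \mid f,g \in L^2(H)\}$: extend $f, g$ by $0$ to $G$, and note that the $G$-convolution of the extensions vanishes off $H$. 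For $M_0A(H)$, I would start from the Bo\.zejko--Fendler representation $\varphi(s^{-1}t) = \langle \xi(t), \eta(s) \rangle$, pick a system $R$ of coset representatives for $G/H$ with $e \in R$, and exploit that $H$ is clopen to ensure that the maps $g \mapsto r(g) \in R$ and $g \mapsto h(g) = r(g)^{-1} g \in H$ are continuous (they are constant on each coset of $H$). Then $\tilde{\xi}(t) = \delta_{r(t)} \otimes \xi(h(t))$ and $\tilde{\eta}(s) = \delta_{r(s)} \otimes \eta(h(s))$ in $\ell^2(R) \otimes \mathcal{H}$ realise the zero-extension $\tilde{\varphi}$ on $G$ as a Herz--Schur multiplier of the same norm.

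With the extended functions $\tilde{a}^i_j \in A(G)$ in hand, satisfying $\|\tilde{a}^i_j\|_{M_0A(G)} \leq C$, it remains to assemble a single net on $G$. Using that every compact subset of $G$ lies in some compactly generated open subgroup (take the subgroup generated by the given compact set together with a compact symmetric neighbourhood of $e$, which contains an open subgroup), I would index by pairs $(K, \varepsilon)$ with $K \subseteq G$ compact and $\varepsilon > 0$: for each such pair, pick $i$ with $G_i \supseteq K$ and then $j$ with $\sup_{t \in K} |a^i_j(t) - 1| \leq \varepsilon$, and take the corresponding $\tilde{a}^i_j$. This produces a net in $A(G)$ converging to $1$ uniformly on compacta with $M_0A(G)$-norm at most $C$, giving $\boldsymbol\Lambda(G) \leq C$; letting $C$ decrease to $\sup_i \boldsymbol\Lambda(G_i)$ closes the argument. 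I expect the main obstacle to be the $M_0A$ extension lemma: the algebraic identity $\tilde{\varphi}(s^{-1}t) = \langle \tilde{\xi}(t), \tilde{\eta}(s) \rangle$ is immediate from the coset decomposition, but verifying the continuity of $\tilde{\xi}, \tilde{\eta}$ relies crucially on $H$ being clopen, without which the extension would generally fail.
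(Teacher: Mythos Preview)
The paper does not actually prove this proposition; all the stability results in that subsection are stated without proof, with a reference to \cite{CowHaa}. Your argument is correct and is essentially the standard one (and the one in \cite{CowHaa}): the subgroup inequality gives monotonicity and the easy bound $\sup_i\boldsymbol\Lambda(G_i)\leq\boldsymbol\Lambda(G)$, while the reverse inequality follows from the extension-by-zero lemma for open subgroups---for $A$ via the factorisation through $L^2$, and for $M_0A$ via the Bo\.zejko--Fendler characterisation with the coset-block construction you describe---together with the fact that every compact set lies in some compactly generated open subgroup. One small wording issue: your parenthetical ``which contains an open subgroup'' is slightly misplaced (a compact neighbourhood of $e$ need not contain an open subgroup); what you want is that the subgroup \emph{generated} by a neighbourhood of $e$ is open, which is what makes the ambient compactly generated subgroup open.
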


Let $G$ be a locally compact group. A lattice in $G$ is a closed discrete subgroup $\Gamma$ such that the quotient $G/\Gamma$ admits a regular, $G$-invariant probability measure. Some examples of lattices are $\Z^n$ inside $\R^n$, and $\operatorname{SL}(n,\Z)$ inside $\operatorname{SL}(n,\R)$. The following result will be essential for studying lattices in Lie groups.

\begin{prop}\label{Prop_CH_lattice}
Let $G$ be a locally compact group, and let $\Gamma$ be a lattice in $G$. Then
\begin{align*}
\boldsymbol\Lambda(\Gamma)=\boldsymbol\Lambda(G).
\end{align*}
\end{prop}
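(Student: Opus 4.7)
The inequality $\boldsymbol\Lambda(\Gamma)\leq\boldsymbol\Lambda(G)$ is immediate from the preceding proposition on closed subgroups. The content of this proposition lies entirely in the reverse inequality $\boldsymbol\Lambda(G)\leq\boldsymbol\Lambda(\Gamma)$, which I would establish by constructing a norm-contractive ``induction'' procedure $M_0A(\Gamma)\to M_0A(G)$ that transports approximate identities from $\Gamma$ to $G$.

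Starting from a net $(\varphi_i)$ in $A(\Gamma)$ with $\|\varphi_i\|_{M_0A(\Gamma)}\leq C$ (for $C$ slightly above $\boldsymbol\Lambda(\Gamma)$) and $\varphi_i\to 1$ pointwise, I would apply Theorem \ref{Thm_Boz-Fen} to produce a Herz--Schur representation $\varphi_i(\gamma_1^{-1}\gamma_2)=\langle\xi_i(\gamma_2),\eta_i(\gamma_1)\rangle_{\mathcal{H}_i}$ with $\sup\|\xi_i\|\cdot\sup\|\eta_i\|\leq C+o(1)$. Fixing the $G$-invariant probability measure $\mu$ on $\Gamma\backslash G$ and a Borel section $\sigma:\Gamma\backslash G\to G$, form the cocycle $c(x,g)\in\Gamma$ determined by $\sigma(x)g=c(x,g)\sigma(xg)$, and define
$$\tilde\xi_i(g)(x)=\xi_i(c(x,g)),\qquad \tilde\eta_i(g)(x)=\eta_i(c(x,g)),$$
viewed as elements of $L^2(\Gamma\backslash G,\mu;\mathcal{H}_i)$. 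Since $\mu$ is a probability measure, the sup-norms of $\tilde\xi_i,\tilde\eta_i$ are automatically dominated by those of $\xi_i,\eta_i$.

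The crucial computation exploits the cocycle identity $c(x,s)^{-1}c(x,t)=\sigma(xs)\,s^{-1}t\,\sigma(xt)^{-1}$: the change of variables $x\mapsto xh^{-1}$, which is measure-preserving by $G$-invariance of $\mu$, shows that
$$\langle\tilde\xi_i(t),\tilde\eta_i(s)\rangle = \int_{\Gamma\backslash G}\varphi_i\bigl(c(x,s)^{-1}c(x,t)\bigr)\,d\mu(x)$$
is invariant under the simultaneous replacement $(s,t)\mapsto(hs,ht)$, and therefore depends only on $s^{-1}t$. Writing $\tilde\varphi_i(s^{-1}t)$ for this quantity and invoking Theorem \ref{Thm_Boz-Fen} in the opposite direction, one obtains $\tilde\varphi_i\in M_0A(G)$ with $\|\tilde\varphi_i\|_{M_0A(G)}\leq C+o(1)$. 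Uniform convergence $\tilde\varphi_i\to 1$ on compact subsets of $G$ follows from the pointwise convergence $\varphi_i\to 1$ via dominated convergence, provided the cocycle values $c(x,g)$ stay in a controlled family as $(x,g)$ varies in $(\Gamma\backslash G)\times K$. A final truncation --- multiplying each $\tilde\varphi_i$ by a compactly supported element of $A(G)$ equal to $1$ on a prescribed compact set, using that $A(G)$ is an ideal in $M_0A(G)$ --- delivers the required net inside $A(G)$.

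The main obstacle is the $s^{-1}t$-invariance of the integral: it is essential for obtaining a genuine element of $M_0A(G)$, and it rests on the $G$-invariance of $\mu$, which is precisely the defining property of a lattice (and, tellingly, the reason the analogous construction fails for general closed subgroups of infinite covolume). A secondary difficulty arises for non-uniform lattices, where $\Gamma\backslash G$ is noncompact: controlling the range of the cocycle $c(\cdot,g)$ uniformly in $g$ on compact sets, and verifying the regularity required to apply Theorem \ref{Thm_Boz-Fen} to the constructed $\tilde\xi_i,\tilde\eta_i$, needs additional delicate analysis beyond the uniform-lattice case.
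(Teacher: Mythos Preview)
The paper does not prove this proposition; it is one of the stability results stated without proof and attributed to \cite{CowHaa}. Your cocycle-induction argument is essentially the one carried out there (and in \cite{Haa2}): average $\varphi_i\circ c(\cdot,g)$ over the probability space $\Gamma\backslash G$, use $G$-invariance of $\mu$ to obtain a function of $s^{-1}t$, and invoke Theorem~\ref{Thm_Boz-Fen}. Your diagnosis of the two technical issues --- continuity of $\tilde\varphi_i$ and control of the cocycle for non-uniform $\Gamma$ --- is accurate; the paper's remark in \S\ref{Ssec_HS_mult} that continuity of $\varphi$ alone suffices in Theorem~\ref{Thm_Boz-Fen} is precisely what disposes of the first, while for the second one exhausts $\Gamma\backslash G$ by sets of large measure on which $\sigma$ has relatively compact image, so that $c(\cdot,g)$ takes only finitely many values there for $g$ in a fixed compact set. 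One minor point: the invariance under $(s,t)\mapsto(hs,ht)$ is more transparent via the cocycle identity $c(x,hg)=c(x,h)\,c(xh,g)$, which gives $c(x,hs)^{-1}c(x,ht)=c(xh,s)^{-1}c(xh,t)$ directly, than via the expression $\sigma(xs)\,s^{-1}t\,\sigma(xt)^{-1}$ you wrote (which lies in $G$, not manifestly in $\Gamma$).
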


\begin{rmk}
Weak amenability is not stable under group extensions. In \cite{Haa2}, Haagerup showed that $\operatorname{SL}(2,\R)\ltimes\R^2$ is not weakly amenable; however, both $\operatorname{SL}(2,\R)$ and $\R^2$ are weakly amenable with constant $1$; see Theorem \ref{Thm_Lie} and Proposition \ref{Prop_amen_wa}. 
\end{rmk}

\section{{\bf Weak amenability for Lie groups}}\label{Sec_Lie}

Since Theorem \ref{Thm_wa_CBAP} is the main motivation for studying weak amenability, one might argue that this property is only relevant for discrete groups, which would considerably simplify the presentation in this introductory text. However, there is a good reason for working in the realm of locally compact groups from the beginning: Lie groups. The analytic tools provided by the theory of Lie groups, together with Proposition \ref{Prop_CH_lattice} allow us to draw information about the operator algebras associated to their lattices. Furthermore, before receiving the name \textit{weak amenability}, the first examples of groups for which this property was studied were the Lie groups $\operatorname{SU}(n,1)$ in \cite{Cow}.

After an enormous body of work that spanned over almost four decades, weak amenability for Lie groups is very well understood nowadays, as well as the values of their Cowling--Haagerup constants. We will summarise all these results in Theorems \ref{Thm_Lie}, \ref{Thm_Lie2} and \ref{Thm_Lie3} below, following the presentations of \cite{Knu3} and \cite{Knu}. 

Before stating these results, we briefly discuss the notions of real rank and local isomorphism; for more details on Lie groups and their structure, we refer the reader to \cite{Hel} and \cite{Kna}. Let $G$ be a simple Lie group with Lie algebra $\mathfrak{g}$. The Cartan decomposition of $\mathfrak{g}$ is
\begin{align*}
\mathfrak{g}=\mathfrak{k}+\mathfrak{p},
\end{align*}
where $\mathfrak{k}$ and $\mathfrak{p}$ are the eigenspaces for the Cartan involution $\theta:\mathfrak{g}\to\mathfrak{g}$, associated to the eigenvalues $1$ and $-1$ respectively; see \cite[\S VI.2]{Kna}. The real rank of $G$ is the dimension of a maximal abelian subspace of $\mathfrak{p}$. We will use the notation $G\approx H$ to say that $G$ is locally isomorphic to $H$, meaning that their Lie algebras are isomorphic.

\begin{thm}\label{Thm_Lie}
Let $G$ be a connected simple Lie group. Then $G$ is weakly amenable if and only if it has real rank $0$ or $1$. Furthermore,
\begin{align*}
\boldsymbol\Lambda(G)=\begin{cases}
1 & \text{if }\ G \text{ has real rank } 0,\\
1 & \text{if }\ G\approx\operatorname{SO}(n,1),\ n\geq 2,\\
1 & \text{if }\ G\approx\operatorname{SU}(n,1),\ n\geq 2,\\
2n-1 & \text{if }\ G\approx\operatorname{Sp}(n,1),\ n\geq 2,\\
21 & \text{if }\ G\approx\operatorname{F}_{4,-20}.
\end{cases}
\end{align*}
\end{thm}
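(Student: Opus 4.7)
The plan is a case analysis on the real rank of $G$, with three regimes: rank $0$, rank $1$ (four subcases), and rank $\geq 2$.

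For real rank $0$, such $G$ is compact, hence amenable, and Proposition \ref{Prop_amen_wa} immediately gives $\boldsymbol\Lambda(G)=1$. For real rank at least $2$, I would establish $\boldsymbol\Lambda(G)=\infty$ by an obstruction argument. Using the Iwasawa decomposition and the classification of restricted root systems, any such $G$ contains a closed subgroup locally isomorphic either to $\operatorname{SL}(2,\R)\ltimes\R^2$, arising from a long parabolic, or to $\operatorname{SL}(3,\R)$, which itself contains such a semidirect product. Combining the monotonicity $\boldsymbol\Lambda(H)\leq\boldsymbol\Lambda(G)$ for closed subgroups with Haagerup's theorem that $\operatorname{SL}(2,\R)\ltimes\R^2$ is not weakly amenable (cited in the Remark after Proposition \ref{Prop_CH_lattice}) yields $\boldsymbol\Lambda(G)=\infty$.

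The heart of the theorem is real rank $1$. Every such $G$ is, up to local isomorphism, the identity component of the isometry group of a rank-$1$ non-compact symmetric space: real, complex, or quaternionic hyperbolic space, or the octonionic hyperbolic plane, which dictates the four families. By the Cartan decomposition $G=KAK$, a natural approximate identity is sought among $K$-bi-invariant functions $\varphi_t$ with $\varphi_t\to 1$ uniformly on compacta, and with $\|\varphi_t\|_{M_0A(G)}\to C_G$, where $C_G$ is the conjectured constant. Upper bounds would come from explicit constructions: for $\operatorname{SO}(n,1)$ and $\operatorname{SU}(n,1)$, one may take functions modelled on $\exp(-t\,d(o,\cdot))$, or analytic continuations of complementary-series matrix coefficients following de Cannière--Haagerup and Cowling. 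Using Theorem \ref{Thm_Boz-Fen} applied to Hilbert space realisations coming from spherical analysis, the $M_0A$-norm of these radial functions can be estimated and shown to tend to $1$. For $\operatorname{Sp}(n,1)$ and $\operatorname{F}_{4,-20}$, one needs more care: Cowling's radial construction produces the matching upper bounds $2n-1$ and $21$, essentially because the quaternionic (resp.\ octonionic) Plancherel density forces a larger norm on any radial approximation of the constant function $1$.

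The main obstacle is the matching lower bound for $\operatorname{Sp}(n,1)$ and $\operatorname{F}_{4,-20}$. The strategy, following Cowling and Haagerup, would be to first reduce any candidate approximate identity to its $K$-bi-invariant part (the averaging projection is contractive on $M_0A(G)$, by integrating the Bożejko--Fendler representation against $K\times K$), thereby reducing the question to radial multipliers. One then exploits property (T) for these groups: the trivial representation is isolated in the spherical unitary dual, while the complementary series deforms analytically towards it, and an explicit estimate on the $M_0A$-norm of the spherical functions at the endpoint of the complementary strip produces exactly the lower bound $2n-1$ (resp.\ $21$). This rigidity step is technically the most delicate piece of the whole theorem, and it is precisely what separates $\operatorname{Sp}(n,1)$ and $\operatorname{F}_{4,-20}$ from the two families with $\boldsymbol\Lambda=1$.
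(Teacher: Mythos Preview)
The paper does not prove Theorem \ref{Thm_Lie}. It explicitly says: ``We will not give any details on the proof of this very deep theorem; we will simply mention that an essential ingredient is the study of uniformly bounded representations.'' What follows in the paper is a historical account attributing the various pieces to \cite{Cow}, \cite{deCHaa}, \cite{Haa2}, \cite{CowHaa}, \cite{Han}, \cite{Dor}, \cite{Dor2}. So there is no proof in the paper to compare your proposal against.

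Your outline is broadly faithful to the structure of that literature: rank $0$ is compactness, the rank $1$ upper bounds come from $K$-bi-invariant coefficients of analytic families of (uniformly bounded) representations, the rank $1$ lower bounds from the asymptotics of spherical functions near the endpoint of the complementary series, and the higher-rank obstruction via Haagerup's result on $\operatorname{SL}(2,\R)\ltimes\R^2$. One genuine gap deserves mention. In your rank $\geq 2$ step you exhibit a closed subgroup $H\leq G$ that is only \emph{locally isomorphic} to $\operatorname{SL}(2,\R)\ltimes\R^2$ (or $\operatorname{SL}(3,\R)$), and then invoke $\boldsymbol\Lambda(\operatorname{SL}(2,\R)\ltimes\R^2)=\infty$. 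Monotonicity gives $\boldsymbol\Lambda(H)\leq\boldsymbol\Lambda(G)$, but passing from Haagerup's computation to $\boldsymbol\Lambda(H)=\infty$ presupposes that $\boldsymbol\Lambda$ is a local-isomorphism invariant---which is exactly the content being established, and is precisely why the separate work of Hansen \cite{Han} and Dorofaeff \cite{Dor,Dor2} removing the finite-centre hypothesis is needed. For $G$ with finite centre the reduction goes through as you describe; for infinite centre it does not, and Dorofaeff's argument proceeds differently (working directly with the universal cover rather than by a subgroup reduction to the linear case).
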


By the classification of simple real Lie algebras (see e.g. \cite[Theorem 6.105]{Kna}), any connected simple Lie group of real rank $1$ is locally isomorphic to either $\operatorname{SO}(n,1)$, $\operatorname{SU}(n,1)$, $\operatorname{Sp}(n,1)$ or $\operatorname{F}_{4,-20}$. Hence Theorem \ref{Thm_Lie} completely determines the value of $\boldsymbol\Lambda(G)$ for any connected simple Lie group $G$. We will not give any details on the proof of this very deep theorem; we will simply mention that an essential ingredient is the study of uniformly bounded representations, in a similar spirit as in Section \ref{Ssec_wa_ubr}.

Now we briefly describe the history of how Theorem \ref{Thm_Lie} came to be. As mentioned above, in \cite{Cow}, Cowling showed that $\boldsymbol\Lambda(\operatorname{SU}(n,1))=1$ without using that language. 

In \cite{deCHaa}, de Canni\`ere and Haagerup added $\operatorname{SO}_0(n,1)$ and its closed subgroups to the list. In particular, they showed that all free groups $\F_N$ ($2\leq N\leq\infty$) are weakly amenable and $\boldsymbol\Lambda(\F_N)=1$. Again, the Cowling--Haagerup constant had not yet been introduced, so they did not state their results that way.

In \cite{Haa2}, Haagerup showed that, for every simple Lie group $G$ of real rank at least $2$ and finite centre, $\boldsymbol\Lambda(G)=\infty$. Although that paper was published only in 2016, a highly cited preprint had been circulating amongst experts since 1986.

Weak amenability and the Cowling--Haagerup constant were finally introduced in \cite{CowHaa}. In that same article, the authors showed that every simple Lie group with real rank $1$ and finite centre is weakly amenable, giving the exact value of the Cowling--Haagerup constant in each case. In particular, they computed $\boldsymbol\Lambda(\operatorname{Sp}(n,1))$ and $\boldsymbol\Lambda(\operatorname{F}_{4,-20})$, providing the first examples of weakly amenable groups with Cowling--Haagerup constant strictly greater than $1$. A very important corollary of their results is the following.

\begin{cor}[Cowling--Haagerup]
Let $n,m\geq 2$ with $n\neq m$. Let $\Gamma$ and $\Lambda$ be lattices in $\operatorname{Sp}(n,1)$ and $\operatorname{Sp}(m,1)$ respectively. Then $C_\lambda^*(\Gamma)$ and $C_\lambda^*(\Lambda)$ are not isomorphic as $\mathbf{C}^*$-algebras, and $L\Gamma$ and $L\Lambda$ are not isomorphic as von Neumann algebras.
\end{cor}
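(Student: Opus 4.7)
The strategy is to transport the value of the Cowling--Haagerup constant all the way from the ambient Lie group down to the two operator algebras, and then read off the statement by comparing these numerical invariants.

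First I would observe that $\boldsymbol\Lambda_{\mathrm{cb}}(\cdot)$ and $\boldsymbol\Lambda_{\mathrm{w^*cb}}(\cdot)$ are genuine isomorphism invariants in their respective categories. Indeed, any $\ast$-isomorphism between $\mathbf{C}^*$-algebras (resp.\ von Neumann algebras) is automatically a complete isometry (resp.\ and weak*-weak*-bicontinuous), so if $\phi:\mathcal{A}\to\mathcal{B}$ is such an isomorphism and $(T_i)$ witnesses the CBAP (resp.\ w*-CBAP) for $\mathcal{A}$ with constant $C$, then $(\phi\circ T_i\circ\phi^{-1})$ witnesses it for $\mathcal{B}$ with the same constant; taking infima yields equality of the two invariants. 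This is where I would pause carefully: this step, although conceptually clear, is the part that really makes the argument work, and it requires the standard fact that $\ast$-homomorphisms are completely positive, hence completely contractive.

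Next I would combine the machinery gathered in the previous sections. By Theorem \ref{Thm_Lie}, $\boldsymbol\Lambda(\operatorname{Sp}(n,1))=2n-1$ and $\boldsymbol\Lambda(\operatorname{Sp}(m,1))=2m-1$. Since $\Gamma$ and $\Lambda$ are lattices in these Lie groups, Proposition \ref{Prop_CH_lattice} gives
\begin{align*}
\boldsymbol\Lambda(\Gamma)=2n-1\qquad\text{and}\qquad\boldsymbol\Lambda(\Lambda)=2m-1.
\end{align*}
Because $\Gamma$ and $\Lambda$ are discrete (lattices are closed discrete by definition), Theorem \ref{Thm_wa_CBAP} then applies and translates these equalities into
\begin{align*}
\boldsymbol\Lambda_{\mathrm{cb}}(C_\lambda^*(\Gamma))=2n-1,\quad \boldsymbol\Lambda_{\mathrm{cb}}(C_\lambda^*(\Lambda))=2m-1,
\end{align*}
and identically for $\boldsymbol\Lambda_{\mathrm{w^*cb}}$ applied to $L\Gamma$ and $L\Lambda$.

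Finally, since $n\neq m$ forces $2n-1\neq 2m-1$, the invariants differ. Combined with the first paragraph, this rules out the existence of any $\ast$-isomorphism $C_\lambda^*(\Gamma)\cong C_\lambda^*(\Lambda)$ or $L\Gamma\cong L\Lambda$. The main conceptual hurdle is really absorbed upstream (in Theorem \ref{Thm_Lie} and Proposition \ref{Prop_CH_lattice}); what remains once those are in hand is little more than the invariance check of $\boldsymbol\Lambda_{\mathrm{cb}}$ and $\boldsymbol\Lambda_{\mathrm{w^*cb}}$ and arithmetic.
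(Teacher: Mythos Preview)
Your proposal is correct and matches exactly the argument the paper has in mind: the corollary is stated without proof precisely because it is meant to be read off from Theorem~\ref{Thm_Lie}, Proposition~\ref{Prop_CH_lattice}, and Theorem~\ref{Thm_wa_CBAP}, together with the evident invariance of $\boldsymbol\Lambda_{\mathrm{cb}}$ and $\boldsymbol\Lambda_{\mathrm{w^*cb}}$ under isomorphism. There is nothing to add.
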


In \cite{Han}, Hansen was able to remove the finite centre condition from the main result of \cite{CowHaa} by studying the universal covering group of $\operatorname{SU}(n,1)$. 

Subsequently, Dorofaeff showed in \cite{Dor,Dor2} that the finite centre assumption can also be removed from the main result of \cite{Haa2}. In other words, every simple Lie group $G$ of real rank at least $2$ satisfies $\boldsymbol\Lambda(G)=\infty$. All these results together yield Theorem \ref{Thm_Lie}.

For semisimple Lie groups, the situation is also very well understood.

\begin{thm}\label{Thm_Lie2}
Let $G$ be a connected semisimple Lie group. Then $G$ is locally isomorphic to a direct product $S_1\times\cdots\times S_n$ of connected simple Lie groups $S_1,\ldots,S_n$. Moreover, $G$ is weakly amenable if and only if each $S_i$ is weakly amenable, and
\begin{align*}
\boldsymbol\Lambda(G)=\prod_{i=1}^n \boldsymbol\Lambda(S_i).
\end{align*}
\end{thm}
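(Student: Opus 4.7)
The first assertion is purely Lie-algebraic. The Lie algebra $\mathfrak{g}$ of $G$ is semisimple, so by Cartan's classification it decomposes as a direct sum $\mathfrak{g}=\mathfrak{s}_1\oplus\cdots\oplus\mathfrak{s}_n$ of simple ideals. Choosing any connected Lie group $S_i$ with Lie algebra $\mathfrak{s}_i$ (for concreteness, one can take the simply connected one), the direct product $S_1\times\cdots\times S_n$ has Lie algebra $\mathfrak{g}$, and is therefore locally isomorphic to $G$.

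For the Cowling--Haagerup constant, my plan is to separate the proof into two ingredients. The first is the product formula, which by induction on $n$ from the two-factor version stated earlier gives
\begin{align*}
\boldsymbol\Lambda(S_1\times\cdots\times S_n)=\prod_{i=1}^n\boldsymbol\Lambda(S_i).
\end{align*}
The second, and more substantial, ingredient is the invariance of $\boldsymbol\Lambda$ under local isomorphism within the class of connected Lie groups. Granting this, applying it to $G$ and to $S_1\times\cdots\times S_n$ immediately yields the theorem.

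To prove the local isomorphism invariance, I would reduce it to the following claim: for any connected Lie group $L$ and any discrete central subgroup $Z\leq L$, one has $\boldsymbol\Lambda(L)=\boldsymbol\Lambda(L/Z)$. This reduction is achieved by going through a common universal cover $\widetilde{L}$: both $G$ and the product $S_1\times\cdots\times S_n$ are connected Lie groups with universal cover $\widetilde{G}$, and each is a quotient of $\widetilde{G}$ by a discrete central subgroup. The inequality $\boldsymbol\Lambda(L)\leq\boldsymbol\Lambda(L/Z)$ is the easy direction: a net $(\varphi_j)$ in $A(L/Z)$ approximating $1$ with $\|\varphi_j\|_{M_0A(L/Z)}\leq C$ pulls back via the quotient $\pi:L\to L/Z$ to the Herz--Schur multipliers $\varphi_j\circ\pi$ on $L$, which have the same $M_0A$-norm by Theorem \ref{Thm_Boz-Fen} (compose the Hilbert space representation on $L/Z$ with $\pi$). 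Since $\pi$ maps compacts to compacts, $\varphi_j\circ\pi\to 1$ uniformly on compacts of $L$, and multiplying by a compactly supported element of $A(L)$ of norm close to $1$ produces the desired net inside $A(L)$.

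The main obstacle is the reverse inequality $\boldsymbol\Lambda(L/Z)\leq\boldsymbol\Lambda(L)$. When $Z$ is compact, one averages a given multiplier on $L$ over $Z$ using normalised Haar measure and descends it to $L/Z$; this is already covered by the stability proposition for compact normal subgroups. When $Z$ is an infinite discrete central subgroup (which genuinely occurs in the semisimple setting, e.g.\ inside the universal cover of $\operatorname{SL}(2,\R)$), $Z$ is nevertheless amenable since it is discrete abelian. The strategy is then to replace Haar averaging by a F\o{}lner averaging: given a F\o{}lner sequence $(F_k)$ in $Z$, form
\begin{align*}
\psi_j^{(k)}(x)=\frac{1}{|F_k|}\sum_{z\in F_k}\psi_j(xz),
\end{align*}
and let $k\to\infty$ to obtain a $Z$-invariant multiplier that descends to $L/Z$. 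The delicate point, which I expect to be the hardest step, is to preserve the $M_0A$-norm bound and pointwise convergence to $1$ through this limiting procedure; this is done by using the Bo\.zejko--Fendler description of $M_0A$ and a compactness/weak$^*$-limit argument in the space of Hilbert-space valued symbols representing the multipliers, rather than working with the abstract norm directly. Once the $Z$-invariant approximate identity with $M_0A$-norm $\leq C$ on $L$ is produced, it descends to an approximate identity on $L/Z$ with the same bound, completing the argument.
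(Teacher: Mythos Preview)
The paper does not prove this theorem; it is stated without proof and attributed to Knudby \cite{Knu3}, with the remark that the simply connected and finite-centre cases already follow from Theorem~\ref{Thm_Lie3}. The surrounding narrative (Hansen \cite{Han} for the universal cover of $\operatorname{SU}(n,1)$, Dorofaeff \cite{Dor,Dor2} for higher rank, Knudby \cite{Knu3} for the general semisimple case) makes clear that handling infinite centre required separate, substantial arguments each time rather than a general local-isomorphism principle derived from the stability propositions of Section~\ref{Sec_wa}.

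Your reduction to $\boldsymbol\Lambda(L)=\boldsymbol\Lambda(L/Z)$ for discrete central $Z$ is the right target, but the direction you call ``easy'', $\boldsymbol\Lambda(L)\leq\boldsymbol\Lambda(L/Z)$, has a genuine gap. The pullbacks $\varphi_j\circ\pi$ land in $M_0A(L)$ with the correct norm, but being $Z$-periodic they are not in $C_0(L)$, hence not in $A(L)$, when $Z$ is infinite. Your fix --- multiply by a compactly supported $a\in A(L)$ ``of norm close to $1$'' --- is circular: the existence, for every compact $K\subset L$ and every $\varepsilon>0$, of $a\in A(L)$ with $a|_K=1$ and $\|a\|_{M_0A(L)}\leq 1+\varepsilon$ is essentially the statement $\boldsymbol\Lambda(L)=1$. (The result from \cite{KanLau} invoked in Section~\ref{Ssec_Fourier_mult} only yields such $a$ on a \emph{small} neighbourhood of a single point, with the neighbourhood shrinking as $\varepsilon\to 0$.) Were this step valid, one would get $\boldsymbol\Lambda\bigl(\widetilde{\operatorname{SL}(2,\R)}\bigr)\leq\boldsymbol\Lambda\bigl(\operatorname{PSL}(2,\R)\bigr)=1$ for free, and Hansen's contribution \cite{Han} would have been unnecessary. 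The F{\o}lner-averaging direction you flag as hard is indeed hard, and your sketch does not resolve its core difficulty either: after a weak$^*$ limit one obtains a $Z$-invariant element of $M_0A(L)$, but descending it to something in $A(L/Z)$ (not merely $M_0A(L/Z)$) while simultaneously keeping the norm bound and the convergence to $1$ on compacts is exactly where the work lies.
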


In the non-semisimple case, there is not a complete description, but still much is known. The following result is expressed in terms of the Levi decomposition of real Lie algebras; see e.g. \cite[\S B.1]{Kna}.

\begin{thm}\label{Thm_Lie3}
Let $G$ be a connected Lie group, and let $\mathfrak{g}=\mathfrak{s}\ltimes\mathfrak{r}$ be a Levi decomposition of its Lie algebra such that $\mathfrak{s}$ decomposes as a sum of simple ideals $\mathfrak{s}_1\oplus\cdots\oplus\mathfrak{s}_n$. Let $S$ be the semisimple Lie group associated to $\mathfrak{s}$. Assume that $G$ is simply connected or that $S$ has finite centre. Then the following are equivalent:
\begin{itemize}
\item[a)] $G$ is weakly amenable.
\item[b)] For each $i\in\{1,\ldots,n\}$, one of the following holds:
\begin{itemize}
\item[$\bullet$] $\mathfrak{s}_i$ has real rank $0$.
\item[$\bullet$] $\mathfrak{s}_i$ has real rank $1$ and $[\mathfrak{s}_i,\mathfrak{r}]=0$.
\end{itemize}
\end{itemize}
In that case, if $S_i$ denotes the connected simple Lie group associated to $\mathfrak{s}_i$,
\begin{align*}
\boldsymbol\Lambda(G)=\prod_{i=1}^n \boldsymbol\Lambda(S_i).
\end{align*}
\end{thm}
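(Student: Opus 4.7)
The plan is to reduce Theorem \ref{Thm_Lie3} to the semisimple case (Theorem \ref{Thm_Lie2}), the amenable case (Proposition \ref{Prop_amen_wa}), and two concrete obstructions, using the stability properties of $\boldsymbol\Lambda$ from Section 4.3. The hypotheses ``$G$ simply connected or $S$ has finite centre'' are exactly what is required to translate a Lie algebra direct sum into a direct decomposition of the group itself, possibly after quotienting by a compact central subgroup (which does not affect $\boldsymbol\Lambda$).

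For the direction \textbf{(b)}$\Rightarrow$\textbf{(a)}, assume every simple factor $\mathfrak{s}_i$ is either compact (real rank $0$) or rank $1$ with $[\mathfrak{s}_i,\mathfrak{r}]=0$. Let $I=\{i:\operatorname{rank}_{\R}\mathfrak{s}_i=1\}$, and set $\mathfrak{h}=\bigoplus_{i\in I}\mathfrak{s}_i$ and $\mathfrak{g}'=\bigl(\bigoplus_{i\notin I}\mathfrak{s}_i\bigr)\ltimes\mathfrak{r}$. Since each $\mathfrak{s}_i$ with $i\in I$ commutes with $\mathfrak{r}$ and with the other simple summands, we get a Lie algebra direct sum $\mathfrak{g}=\mathfrak{h}\oplus\mathfrak{g}'$. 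The group $G'$ integrating $\mathfrak{g}'$ is an extension of a compact semisimple group by the solvable radical, hence amenable, so $\boldsymbol\Lambda(G')=1$ by Proposition \ref{Prop_amen_wa}. Under the simply connected (or finite-centre) hypothesis, this Lie algebra splitting upgrades to $G\cong\prod_{i\in I}S_i\times G'$ up to a compact central quotient, and the multiplicativity of $\boldsymbol\Lambda$ under direct products together with Theorem \ref{Thm_Lie} yield $\boldsymbol\Lambda(G)=\prod_{i\in I}\boldsymbol\Lambda(S_i)=\prod_{i=1}^n\boldsymbol\Lambda(S_i)$, the compact factors contributing $1$.

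For the contrapositive of \textbf{(a)}$\Rightarrow$\textbf{(b)}, I would exhibit, in each failure case, a non-weakly-amenable closed subgroup of $G$. If some $\mathfrak{s}_i$ has real rank $\geq 2$, the analytic subgroup of $G$ integrating $\mathfrak{s}_i$ is (after passing to a compact central quotient in the finite-centre case) closed and locally isomorphic to a simple Lie group of real rank $\geq 2$; Theorem \ref{Thm_Lie} gives $\boldsymbol\Lambda=\infty$ there, forcing $\boldsymbol\Lambda(G)=\infty$ via the closed-subgroup inequality. If instead some rank-$1$ factor $\mathfrak{s}_i$ satisfies $[\mathfrak{s}_i,\mathfrak{r}]\neq 0$, one selects an $\mathfrak{sl}_2(\R)$-triple in $\mathfrak{s}_i$ together with a two-dimensional $\mathfrak{sl}_2$-invariant subspace $V\subseteq\mathfrak{r}$ affording the standard representation (produced by weight-space analysis of $\mathfrak{r}$ as an $\mathfrak{s}_i$-module); the corresponding analytic subgroup is locally isomorphic to $\operatorname{SL}(2,\R)\ltimes\R^2$, which is not weakly amenable by Haagerup's result quoted in the Remark after Proposition \ref{Prop_CH_lattice}.

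The main obstacles are twofold. The technical step is the upgrade from Lie algebra splittings to genuine Lie group direct products: one performs the splitting in the simply connected cover and then descends via the quotient-by-compact-normal-subgroup proposition. The substantive difficulty lies in the construction of the $\mathfrak{sl}_2(\R)\ltimes\R^2$ subgroup when $[\mathfrak{s}_i,\mathfrak{r}]\neq 0$; since the action of $\mathfrak{s}_i$ on $\mathfrak{r}$ is merely known to be nontrivial, one must locate a two-dimensional invariant subspace carrying the \emph{standard} $\mathfrak{sl}_2$-module, and then verify that the resulting analytic subgroup is closed in $G$ so that the closed-subgroup inequality genuinely applies. This is the step where the rank-$1$ structure of $\mathfrak{s}_i$ plays its essential role beyond mere non-compactness.
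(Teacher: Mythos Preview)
The paper does not itself prove Theorem \ref{Thm_Lie3}; it attributes the finite-centre case to \cite{CDSW} and the simply connected case to Knudby \cite{Knu}, who follows Ozawa \cite{Oza2}. So the comparison is with those cited approaches rather than with an in-text argument.

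Your (b)$\Rightarrow$(a) direction is the natural one and is essentially how the references proceed; the care you flag about lifting the Lie-algebra splitting to the group level is exactly why the two alternative hypotheses are needed.

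In (a)$\Rightarrow$(b), the rank-$\geq 2$ case via Theorem \ref{Thm_Lie} and the closed-subgroup inequality is fine. The genuine gap is in the rank-$1$ case with $[\mathfrak{s}_i,\mathfrak{r}]\neq 0$: your assertion that weight-space analysis produces a two-dimensional subspace of $\mathfrak{r}$ carrying the \emph{standard} $\mathfrak{sl}_2$-module is false in general. Take $G$ simply connected with Lie algebra $\mathfrak{g}=\mathfrak{sl}_2(\R)\ltimes V_2$, where $V_2\cong\R^3$ is the irreducible three-dimensional (adjoint) representation. By the theorem itself $G$ is not weakly amenable, yet $V_2$ has no two-dimensional $\mathfrak{sl}_2$-invariant subspace. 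In fact, as an $\mathfrak{sl}_2$-module for the adjoint action of the Levi factor, $\mathfrak{g}\cong V_2\oplus V_2$, so no copy of $V_1$ appears anywhere; combined with Malcev conjugacy of Levi subalgebras this shows there is \emph{no} Lie-subalgebra of $\mathfrak{g}$ isomorphic to $\mathfrak{sl}_2\ltimes\R^2$. The obstacle you identify (closedness of the analytic subgroup) is thus moot---the subalgebra does not exist to begin with.

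The route that actually works, and the one the paper points to via \cite{Oza2} and \cite{Knu}, avoids the search for a bad closed subgroup entirely. It uses the invariant-mean criterion recorded later as Theorem \ref{Thm_Oza_nonwa}: taking $N$ to be the radical (or a suitable abelian subquotient on which the noncompact factor still acts nontrivially), one shows directly that no $(G\ltimes N)$-invariant mean on $L^\infty(N)$ can exist. This disposes of $\operatorname{SL}(2,\R)\ltimes V_n$ uniformly for every $n\geq 1$, which your subgroup strategy cannot.
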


In \cite{CDSW}, Cowling, Dorofaeff, Seeger and Wright proved Theorem \ref{Thm_Lie3} in the case when $S$ has finite centre, which includes all real algebraic Lie groups. The case when $G$ is simply connected was proved by Knudby in \cite{Knu} following an approach developed by Ozawa \cite{Oza2}.

Finally, Theorem \ref{Thm_Lie2} was proved in full generality by Knudby in \cite{Knu3}. Observe that the particular cases when $G$ is simply connected or has finite centre can be deduced from Theorem \ref{Thm_Lie3}.

\begin{rmk}
Let $G$ be a Lie group as in any of the theorems above. Then, when $G$ is weakly amenable, $\boldsymbol\Lambda(G)$ is an odd natural number. It is not known whether the Cowling--Haagerup constant of a locally compact group can take values outside of $(2\N+1)\cup\{\infty\}$.
\end{rmk}

\section{{\bf Weak amenability from geometric group theory}}\label{Sec_ggt}

One of the consequences of Theorem \ref{Thm_Lie} is that free groups are weakly amenable because they can be realised as closed subgroups of $\operatorname{SO}(2,1)$. It turns out that this fact can also be derived in a more direct way from the geometry of trees. This idea was first exploited by Szwarc \cite{Szw} by means of uniformly bounded representations and by Bo\.{z}ejko and Picardello \cite{BozPic} using radial Schur multipliers. These strategies have become extremely fruitful in the study of weak amenability, and have been extended to several different contexts.

\subsection{Schur multipliers}
Let $X$ be a set, and let $\ell^2(X)$ be the Hilbert space of complex-valued, square-summable functions on $X$. Observe that every $T\in\mathbf{B}(\ell^2(X))$ is completely determined by its matrix coefficients:
\begin{align*}
T_{x,y}=\langle T\delta_y,\delta_x\rangle,\quad\forall x,y\in X.
\end{align*} 
We say that $\varphi:X\times X\to\C$ is a Schur multiplier on $X$ if the map
\begin{align*}
T=(T_{x,y})_{x,y\in X}\in\mathbf{B}(\ell^2(X)) \ \longmapsto\ (\varphi(x,y)T_{x,y})_{x,y\in X}\in\mathbf{B}(\ell^2(X))
\end{align*}
is well defined. In that case, thanks to the closed graph theorem, it defines a bounded linear map $M_\varphi:\mathbf{B}(\ell^2(X))\to\mathbf{B}(\ell^2(X))$.

The following result clarifies why these objects are relevant for weak amenability; for a proof, we refer the reader to \cite[Theorem 5.1]{Pis3}. This theorem may be attributed to Grothendieck, Gilbert and Haagerup; see the Notes and Remarks at the end of Chapter 5 of \cite{Pis3} for a more detailed discussion about this.

\begin{thm}\label{Thm_Schur_mult}
Let $X$ be a set, $\varphi:X\times X\to\C$, and $C\geq 0$. The following are equivalent:
\begin{itemize}
\item[(i)] The function $\varphi$ is a Schur multiplier and the associated map $M_\varphi:\mathbf{B}(\ell^2(X))\to\mathbf{B}(\ell^2(X))$ satisfies $\|M_\varphi\|\leq C$.
\item[(ii)] The function $\varphi$ is a Schur multiplier and the associated map $M_\varphi:\mathbf{B}(\ell^2(X))\to\mathbf{B}(\ell^2(X))$ is completely bounded with $\|M_\varphi\|_{\mathrm{cb}}\leq C$.
\item[(iii)] There is a Hilbert space $\mathcal{H}$ and bounded maps $\xi,\eta:X\to\mathcal{H}$ such that
\begin{align*}
\varphi(x,y)=\langle\xi(x),\eta(y)\rangle,\quad\forall x,y\in X,
\end{align*}
and
\begin{align*}
\left(\sup_{x\in X}\|\xi(x)\|\right)\left(\sup_{y\in X}\|\eta(y)\|\right)\leq C.
\end{align*}
\end{itemize}
\end{thm}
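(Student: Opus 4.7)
The equivalence involves three implications. The easy one is (ii) $\Rightarrow$ (i), which follows from the definitional bound $\|M_\varphi\| \leq \|M_\varphi\|_{\mathrm{cb}}$. The real content is in (iii) $\Rightarrow$ (ii), a concrete factorization statement, and in the deep converse (i) $\Rightarrow$ (iii), where a mere boundedness hypothesis is upgraded both to complete boundedness and to a Hilbert-space factorization, with no loss in the constant $C$. To chain everything with the constant preserved it suffices to prove (iii) $\Rightarrow$ (ii) and (i) $\Rightarrow$ (iii), since (ii) $\Rightarrow$ (i) then closes the loop.

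For (iii) $\Rightarrow$ (ii), my plan is to exhibit $M_\varphi$ as a concrete composition. Given the representing maps $\xi, \eta : X \to \mathcal{H}$, I would define two operators $V, W : \ell^2(X) \to \ell^2(X) \otimes \mathcal{H}$ by the diagonal formulas $V\delta_y = \delta_y \otimes \xi(y)$ and $W\delta_x = \delta_x \otimes \eta(x)$, extended by linearity and continuity. A direct orthogonality computation gives $\|V\| \leq \sup_y \|\xi(y)\|$ and $\|W\| \leq \sup_x \|\eta(x)\|$. A computation on matrix units then identifies
\begin{align*}
W^*(T \otimes I_{\mathcal{H}}) V = M_\varphi(T), \qquad \forall\, T \in \mathbf{B}(\ell^2(X)),
\end{align*}
possibly after passing to the conjugate Hilbert space to reconcile sesquilinearity conventions. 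Since $T \mapsto T \otimes I_{\mathcal{H}}$ is a $\ast$-homomorphism, hence completely isometric, and conjugating a completely bounded map by fixed bounded operators does not increase the cb norm beyond the product of their operator norms, we obtain $\|M_\varphi\|_{\mathrm{cb}} \leq \|V\|\|W\| \leq C$.

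The main obstacle is the direction (i) $\Rightarrow$ (iii), which is a Grothendieck-type factorization theorem in disguise. My plan would be a two-step argument. First, I would reduce to the case of finite $X$: every restriction $\varphi|_{F \times F}$ for finite $F \subset X$ is still a Schur multiplier of norm at most $C$, and if one has representing maps $\xi_F, \eta_F$ with $\sup\|\xi_F\|\sup\|\eta_F\| \leq C$ on each finite $F$, then an ultrafilter limit along the net of finite subsets, realized inside a sufficiently large Hilbert space, glues these into global $\xi, \eta$ on $X$ with the desired pointwise inner-product identity and norm bound. Second, for finite $X$, I would dualize and apply Hahn--Banach: the hypothesis $\|M_\varphi\| \leq C$ controls a bilinear functional coming from the operator-trace pairing, and a separation argument in the cone of positive definite functions on $X \times X$ produces positive weights $\mu, \nu$ on $X$ with $|\varphi(x,y)|^2 \leq C^2 \mu(x)\nu(y)$ up to normalization, from which a GNS-style construction extracts the vectors $\xi(x), \eta(y)$ in a Hilbert space. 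This last step is the true hard part, as it essentially encodes Grothendieck's inequality; the identity $\|M_\varphi\| = \|M_\varphi\|_{\mathrm{cb}}$ is a genuine \emph{automatic complete boundedness} phenomenon specific to Schur multipliers, and it is the reason the theorem is so useful.
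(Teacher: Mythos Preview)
The paper does not prove this theorem: immediately before the statement it says ``for a proof, we refer the reader to \cite[Theorem 5.1]{Pis3}'', and in the introduction it explicitly notes that the only result proved in the paper is Theorem~\ref{Thm_wa_CBAP}. So there is no in-paper proof to compare against; your proposal is being measured against the standard argument in Pisier's book, to which the paper defers.

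With that understood, your outline is the standard one and is essentially correct. The implication (iii) $\Rightarrow$ (ii) via $M_\varphi(T)=W^*(T\otimes I_{\mathcal H})V$ is exactly the usual proof; the reduction of (i) $\Rightarrow$ (iii) to finite $X$ by an ultralimit/compactness argument is also standard. The one place where your sketch drifts is the sentence ``produces positive weights $\mu,\nu$ on $X$ with $|\varphi(x,y)|^2\le C^2\mu(x)\nu(y)$'': that pointwise inequality is not the right intermediate statement and would not by itself yield a Hilbert-space factorisation $\varphi(x,y)=\langle\xi(x),\eta(y)\rangle$. What the Hahn--Banach/minimax step actually delivers (in the finite case) is a pair of probability vectors $p,q$ on $X$ such that, for all finitely supported scalars $(\alpha_x),(\beta_y)$,
\[
\Big|\sum_{x,y}\varphi(x,y)\alpha_x\beta_y\Big|\ \le\ C\Big(\sum_x|\alpha_x|^2 p(x)^{-1}\Big)^{1/2}\Big(\sum_y|\beta_y|^2 q(y)^{-1}\Big)^{1/2},
\]
equivalently a factorisation of the bilinear form through $\ell^2$, from which the vectors $\xi,\eta$ are read off. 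You clearly know this is the delicate step; just be careful that the inequality you wrote is not the one that does the job.
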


We let $V(X)$ denote the space of Schur multipliers on $X$, and we endow it with the norm
\begin{align*}
\|\varphi\|_{V(X)}=\|M_\varphi\|=\|M_\varphi\|_{\mathrm{cb}},\quad\forall \varphi\in V(X),
\end{align*}
for which it becomes a commutative Banach algebra for pointwise operations.

\subsection{Herz--Schur multipliers}\label{Ssec_HS_mult}
Now let $G$ be a (discrete) group. We say that $\varphi:G\to\C$ is a Herz--Schur multiplier if the function $\tilde{\varphi}: G\times G\to\C$ given by
\begin{align*}
\tilde{\varphi}(t,s)=\varphi(s^{-1}t),\quad\forall t,s\in G,
\end{align*}
belongs to $V(G)$. By Theorem \ref{Thm_Boz-Fen}, this is equivalent to being a completely bounded multiplier of $A(G)$. Moreover,
\begin{align*}
\|\varphi\|_{M_0A(G)}=\|\tilde{\varphi}\|_{V(G)}.
\end{align*}
If $G$ is a locally compact group, the maps $\xi,\eta$ in Theorem \ref{Thm_Boz-Fen} are assumed to be continuous; however, one can show that the continuity of $\varphi$ suffices to get a similar characterisation; see \cite[Theorem 3.2]{Haa2}.

As we saw in Section \ref{Ssec_Fourier_mult}, the coefficients of unitary representations of $G$ belong to $M_0A(G)$. More generally, we say that a representation $\pi:G\to\mathbf{B}(\mathcal{H})$ is uniformly bounded if
\begin{align*}
|\pi|=\sup_{t\in G}\|\pi(t)\|<\infty.
\end{align*}
If $\varphi:G\to\C$ is defined by
\begin{align*}
\varphi(t)=\langle\pi(t)\xi,\eta\rangle,\quad\forall t\in G,
\end{align*}
for some $\xi,\eta\in\mathcal{H}$, then $\varphi$ belongs to $M_0A(G)$ and
\begin{align*}
\|\varphi\|_{M_0A(G)}\leq|\pi|^2\|\xi\|\|\eta\|.
\end{align*}
This observation allows us to describe a very large class of multipliers. However, if $G$ contains a free subgroup, there are elements of $M_0A(G)$ that do not arise as coefficients of uniformly bounded representations; see \cite[Remark 3.2]{Pis4} and \cite[Lemma 1.2]{BozFen2}.

\subsection{Weak amenability via uniformly bounded representations}\label{Ssec_wa_ubr}
For a group acting on a tree, Szwarc constructed \cite{Szw} an analytic family of uniformly bounded representations with very interesting properties. One of the consequences of his construction is the following; see \cite[Theorem 6]{Szw}.

\begin{thm}[Szwarc]\label{Thm_Szwarc}
Let $G$ be a locally compact group acting on a locally finite tree $X$. Assume that there is a vertex $x_0\in X$ such that the stabiliser $G_{x_0}=\{t\in G\ \mid\ tx_0=x_0\}$ is compact. Then $G$ is weakly amenable and $\boldsymbol\Lambda(G)=1$.
\end{thm}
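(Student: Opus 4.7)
The plan is to build, for each parameter $0 < r < 1$, a completely bounded multiplier $\varphi_r \in M_0A(G)$ of norm at most one such that $\varphi_r \to 1$ uniformly on compact subsets of $G$ as $r \to 1^-$, and then to modify the $\varphi_r$ slightly so that they sit inside $A(G)$ itself without inflating the multiplier norm by more than a factor $1+\varepsilon$. This yields $\boldsymbol\Lambda(G) \leq 1$, and since the Cowling--Haagerup constant is always $\geq 1$, the theorem follows.

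Define $\varphi_r : G \to \C$ by $\varphi_r(t) = r^{d(x_0, t x_0)}$, where $d$ is the graph metric on $X$. Since $G$ acts by isometries of the tree, $\varphi_r(s^{-1}t) = r^{d(s x_0,\, t x_0)}$, so by Theorem~\ref{Thm_Boz-Fen} the bound $\|\varphi_r\|_{M_0A(G)} \leq 1$ reduces to producing a Hilbert space $\mathcal{H}$ and a map $\xi : G \to \mathcal{H}$ with $\sup_t \|\xi(t)\| \leq 1$ and $\langle \xi(t), \xi(s) \rangle = r^{d(s x_0, t x_0)}$. Equivalently, one must show that $(x, y) \mapsto r^{d(x, y)}$ is a positive definite kernel on the tree $X$. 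This is the classical Haagerup phenomenon on trees: the tree metric is conditionally negative definite (for instance via the isometric embedding of $X$ into $\ell^1$ by summing characteristic functions of edges along geodesics, which makes $\sqrt{d}$ a Hilbert-space kernel), so Schoenberg's theorem guarantees that $e^{-\tau d(x, y)}$ is positive definite for every $\tau > 0$. Setting $r = e^{-\tau}$ and pulling back through the orbit map $t \mapsto t x_0$ gives the desired $\xi$.

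Uniform convergence on compacts is where the geometric hypotheses enter. Let $K \subseteq G$ be compact. Because $G$ acts continuously on the locally finite (hence proper) metric tree $X$, the orbit $K \cdot x_0$ is compact, and compact subsets of a locally finite tree are bounded and therefore meet only finitely many vertices; hence
\[
N_K \,:=\, \sup_{t \in K} d(x_0, t x_0) \,<\, \infty,
\]
so $\sup_{t \in K} |1 - \varphi_r(t)| \leq 1 - r^{N_K} \to 0$ as $r \to 1^-$. The compactness of $G_{x_0}$ is needed here to ensure that the orbit $G \cdot x_0$ is a genuine discrete subset of $X$ on which $\varphi_r$ is well-defined via the tree distance.

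To land inside $A(G)$: as recalled in Section~\ref{Sec_Fourier_alg} and used in the proof of Corollary~\ref{Cor_Lep}, for every compact $K \subseteq G$ and every $\varepsilon > 0$ there exists $\psi_{K, \varepsilon} \in A(G)$ with $\psi_{K, \varepsilon} \equiv 1$ on $K$ and $\|\psi_{K, \varepsilon}\|_{A(G)} \leq 1 + \varepsilon$. Since $A(G)$ is an ideal of $MA(G) \supseteq M_0A(G)$, and $A(G) \hookrightarrow M_0A(G)$ is contractive while $M_0A(G)$ is a Banach algebra, the product $a_{K, \varepsilon, r} := \psi_{K, \varepsilon}\, \varphi_r$ lies in $A(G)$ with $\|a_{K, \varepsilon, r}\|_{M_0A(G)} \leq (1 + \varepsilon) \cdot 1$, and converges to $\psi_{K, \varepsilon}$ uniformly on compacts as $r \to 1^-$, hence to $1$ on $K$. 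A suitable reindexing of the triples $(K, \varepsilon, r)$ produces the net witnessing $\boldsymbol\Lambda(G) \leq 1$. The main obstacle is the first step---the positive definiteness of $r^{d(x, y)}$---which is the geometric core of the argument; Szwarc's original proof in \cite{Szw} in fact derives this from an explicit analytic family of uniformly bounded representations of $G$, fitting into the strategy outlined in Section~\ref{Ssec_wa_ubr}.
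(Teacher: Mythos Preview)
Your first two steps are fine: positive definiteness of $(x,y)\mapsto r^{d(x,y)}$ on a tree yields $\|\varphi_r\|_{M_0A(G)}\le 1$ via Theorem~\ref{Thm_Boz-Fen}, and local finiteness of $X$ together with compactness (hence, for a continuous action, compact-openness) of $G_{x_0}$ forces each level set $\{t:d(x_0,tx_0)\le N\}$ to be compact, so $\varphi_r\to 1$ uniformly on compacta. (Your aside that compactness of $G_{x_0}$ makes the orbit ``discrete'' is misplaced; the orbit sits in the vertex set, which is already discrete. What compactness actually buys is compactness of these level sets.)

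The real gap is in the last step. The claim that for \emph{every} compact $K\subseteq G$ there exists $\psi_{K,\varepsilon}\in A(G)$ with $\psi_{K,\varepsilon}\equiv 1$ on $K$ and $\|\psi_{K,\varepsilon}\|_{A(G)}\le 1+\varepsilon$ is false for non-amenable $G$: writing $\psi_{K,\varepsilon}(t)=\langle\lambda(t)f,g\rangle$ with $\|f\|\,\|g\|\le 1+\varepsilon$ and $\psi_{K,\varepsilon}(e)=1$ forces $f$ to be almost invariant under $\lambda(s)$ for every $s\in K$, and taking $K$ a generating set gives almost-invariant vectors for $\lambda$, i.e.\ amenability. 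The passage you cite from Section~\ref{Ssec_Fourier_mult} only produces such a function on a neighbourhood of a \emph{single} point; Corollary~\ref{Cor_Lep} obtains the global statement precisely by invoking Leptin's theorem, that is, amenability. The correct way to land in $A(G)$ is by truncation rather than multiplication: write $\varphi_r=\sum_{n\ge 0}r^n\chi_n$ with $\chi_n(t)=\mathds{1}_{\{d(x_0,tx_0)=n\}}$, note that each $\chi_n$ is a finite sum of characteristic functions of $G_{x_0}$-double cosets and hence lies in $A(G)$, and use the Bo\.{z}ejko--Picardello estimate $\|\chi_n\|_{M_0A(G)}\le 2n$ (Theorem~\ref{Thm_Boz-Pic}) to show that the tail $\sum_{n>N}r^n\chi_n$ is small in $M_0A$-norm. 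This is exactly the route sketched in Section~\ref{Ssec_rad_Schur}; Szwarc's own proof in \cite{Szw} builds the analytic family of uniformly bounded representations directly, as described in Section~\ref{Ssec_wa_ubr}.
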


As a consequence, free groups are weakly amenable with Cowling--Haagerup constant $1$. The same holds for $\operatorname{SL}(2,\mathbb{Q}_p)$, were $\mathbb{Q}_p$ denotes the field of $p$-adic numbers for some prime $p$; see \cite[\S II.1]{Ser} or \cite[Theorem 10.2.1]{CohGel}. Similar constructions to that of \cite{Szw} were given in \cite{Pim}, \cite{PytSzw} and \cite{Val}.

Theorem \ref{Thm_Szwarc} has also been used to prove weak amenability of Baumslag--Solitar groups:
\begin{align*}
\operatorname{BS}(m,n)=\langle a,b\ \mid\ ba^mb^{-1}=a^n \rangle.
\end{align*}
The argument was developed by Gal and Januszkiewicz in \cite{GalJan}, and it consists in embedding $\operatorname{BS}(m,n)$ into a product of weakly amenable groups, where one of the factors is the automorphism group of a locally finite tree. The goal of \cite{GalJan} was to prove the Haagerup property for such groups, and weak amenability is nowhere mentioned in that paper. It was later observed in \cite[Example 1.4]{CorVal} that the same argument proves weak amenability.

\begin{thm}[Cornulier--Valette]\label{Thm_BS(m,n)}
Let $m,n\in\Z$. The Baumslag--Solitar group $\operatorname{BS}(m,n)$ is weakly amenable and $\boldsymbol\Lambda(\operatorname{BS}(m,n))=1$.
\end{thm}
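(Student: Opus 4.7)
The plan is to realise $\operatorname{BS}(m,n)$ as a closed (discrete) subgroup of a product $G_1\times G_2$ of two locally compact groups, each weakly amenable with Cowling--Haagerup constant $1$; the product formula and the subgroup inequality from Section \ref{Sec_wa} then give $\boldsymbol\Lambda(\operatorname{BS}(m,n))\leq\boldsymbol\Lambda(G_1)\boldsymbol\Lambda(G_2)=1$, and since $\boldsymbol\Lambda\geq 1$ always the desired equality follows. In the degenerate cases where $m=0$, $n=0$, or $|m|=|n|=1$, the group $\operatorname{BS}(m,n)$ is virtually free or solvable and the conclusion is immediate from Theorem \ref{Thm_Szwarc} and Proposition \ref{Prop_amen_wa}, so I may assume $m,n\neq 0$ and $(|m|,|n|)\neq (1,1)$ in what follows.

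For the first factor I take $G_1=\operatorname{Aut}(T)$, where $T$ is the Bass--Serre tree of the HNN presentation $\operatorname{BS}(m,n)=\langle a\rangle\ast_{m\Z=n\Z}$. Under our assumption this tree is locally finite (the valences are $|m|$ and $|n|$), and $\operatorname{Aut}(T)$ with the topology of pointwise convergence on vertices is a second countable locally compact group whose vertex stabilisers are profinite; Theorem \ref{Thm_Szwarc} then yields $\boldsymbol\Lambda(\operatorname{Aut}(T))=1$. For the second factor I would use the affine action $a\cdot x=x+1$, $b\cdot x=(n/m)x$, giving a homomorphism into the solvable (hence amenable) group $\operatorname{Aff}(\R)$; when necessary to secure closedness of the diagonal image, this is supplemented by the analogous $\mathbb{Q}_p$-affine actions for primes $p\mid mn$, producing an amenable locally compact group $G_2$ whose Cowling--Haagerup constant is $1$ by Proposition \ref{Prop_amen_wa}. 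The diagonal map $\operatorname{BS}(m,n)\to G_1\times G_2$ is then injective, since the kernel of the $T$-action is a conjugate of $\langle a\rangle$ and $a$ already acts non-trivially by translation in the $\operatorname{Aff}$-factor.

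The main obstacle is verifying that this diagonal image is closed (equivalently discrete) in $G_1\times G_2$. A small identity neighbourhood in $\operatorname{Aut}(T)$ fixes a large finite subtree pointwise, which forces any element of the image lying in it to sit in a single vertex stabiliser, i.e. in a conjugate of $\langle a\rangle$; one then needs the image of this infinite cyclic group to be discrete in $G_2$. This is exactly the role of the $p$-adic supplementation: $\Z[1/(mn)]$ embeds diagonally as a cocompact lattice in $\R\times\prod_{p\mid mn}\mathbb{Q}_p$, so the translation subgroup generated by $a$ maps discretely into the corresponding product of affine groups. Once closedness has been checked, the stability of $\boldsymbol\Lambda$ under closed subgroups and direct products closes the argument, matching the strategy attributed to Gal--Januszkiewicz in the excerpt.
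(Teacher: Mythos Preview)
Your proposal is correct and is precisely the Gal--Januszkiewicz argument the paper sketches: embed $\operatorname{BS}(m,n)$ discretely into $\operatorname{Aut}(T)\times G_2$ with $T$ the Bass--Serre tree and $G_2$ amenable, then apply the subgroup inequality and product formula for $\boldsymbol\Lambda$ together with Theorem~\ref{Thm_Szwarc}. Two minor corrections: the $p$-adic supplementation is in fact unnecessary, since once you fix the basic neighbourhood $U_1=\operatorname{Fix}(B_R(v_0))$ in $\operatorname{Aut}(T)$ the elements of $\operatorname{BS}(m,n)$ landing in $U_1$ already lie in the single cyclic group $\langle a\rangle$, whose image in $\operatorname{Aff}(\R)$ is the discrete group $\Z$ of integer translations; and the Bass--Serre tree of this HNN extension is $(|m|+|n|)$-regular rather than $(|m|,|n|)$-biregular, while the kernel of the tree action is \emph{contained in} $\langle a\rangle$ (being normal) rather than equal to a conjugate of it.
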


We refer the reader to \cite[Corollary 1.7]{CorVal} for a more general result on generalised Baumslag--Solitar groups.

The idea of proving weak amenability via analytic families of representations was taken even further by Valette in \cite{Val2}. Let $G$ be a locally compact group, and let $e$ denote the identity element of $G$. Recall that a map $L:G\to\N$ is proper if $L^{-1}(\{n\})$ is compact for every $n\in\N$. Let $\D$ denote the open unit disc in $\C$.

\begin{thm}[Valette]\label{Thm_Valette_ubr}
Let $G$ be a locally compact group endowed with a continuous, proper map $L:G\to\N$ such that $L(e)=0$. Assume that there is an analytic family of uniformly bounded representations $(\pi_z)_{z\in\D}$ of $G$ on a Hilbert space $\mathcal{H}$ such that $\pi_t$ is unitary for all $t\in(0,1)$. Assume also that there is $\xi\in\mathcal{H}$ such that
\begin{align*}
z^{L(s)}=\langle\pi_z(s)\xi,\xi\rangle,\quad\forall z\in\D,\ \forall s\in G.
\end{align*}
Then $G$ is weakly amenable and $\boldsymbol\Lambda(G)=1$.
\end{thm}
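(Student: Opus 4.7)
The plan is to exhibit the family $\varphi_t(s) := t^{L(s)} = \langle \pi_t(s)\xi, \xi\rangle$, indexed by $t \in (0,1)$ and filtered by $t \to 1^-$, as an approximate identity witnessing $\boldsymbol{\Lambda}(G) = 1$. Three items must be verified: (a) $\varphi_t \in A(G)$; (b) $\|\varphi_t\|_{M_0A(G)} \leq 1$; and (c) $\varphi_t \to 1$ uniformly on compact subsets of $G$ as $t \to 1^-$.

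The verifications of (b) and (c) are routine. Setting $s = e$ in the hypothesis $z^{L(s)} = \langle \pi_z(s)\xi, \xi\rangle$ yields $\|\xi\|^2 = 1$. For $t \in (0,1)$, the unitarity of $\pi_t$ makes $\varphi_t$ a normalized continuous positive definite function on $G$; hence $\varphi_t \in B(G)$ with $\|\varphi_t\|_{B(G)} = \varphi_t(e) = 1$, and the contractive embedding $B(G) \hookrightarrow M_0A(G)$ of Section~\ref{Ssec_Fourier_mult} gives (b). For (c), properness and continuity of $L$ together with $L(e) = 0$ force every compact $K \subseteq G$ to lie inside some sublevel set $L^{-1}(\{0, 1, \dots, N\})$, so $\sup_{s \in K} |\varphi_t(s) - 1| \leq 1 - t^N \to 0$. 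The same properness also yields $\varphi_t \in C_0(G)$.

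The main obstacle is (a): showing $\varphi_t \in A(G)$ rather than only $B(G) \cap C_0(G)$. The inclusion $A(G) \subseteq B(G) \cap C_0(G)$ is generally strict, so positive-definiteness together with $C_0$-decay is not enough; the analyticity of the full family $(\pi_z)_{z \in \D}$ is the indispensable extra input. The strategy I would follow is to show that the cyclic subrepresentation of $\pi_t$ generated by $\xi$ is weakly contained in the regular representation $\lambda$, which is equivalent to $\varphi_t \in A(G)$. This decomposes into two steps. First, for $|z|$ small enough the compactness of the level sets $L^{-1}(\{n\})$ makes $\int_G |z|^{2L(s)}\,ds < \infty$, placing $\varphi_z$ in $L^2(G)$; for real $z = r$ in this range, the positive definite $L^2$-function $\varphi_r$ lies automatically in $A(G)$ as the convolution of an $L^2$ square root with its involution. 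Second, propagate $A(G)$-membership from small $r$ to all $t \in (0,1)$ using the analyticity of $z \mapsto \pi_z(s)\xi$, for instance via a Cauchy-integral representation of $\varphi_t$ as a contour average of $\varphi_z$ over $\{|z| = \rho\}$ with $t < \rho < 1$, or by constructing an analytic intertwiner from $\pi_z$ into a multiple of $\lambda$.

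The delicate point is that such a contour integral converges a priori only in the $M_0A(G)$-norm, not in $A(G)$. It is the combination of unitarity of $\pi_t$ on the interval $(0,1)$ (which pins the sharp bound $\|\varphi_t\|_{M_0A(G)} \leq 1$) with analyticity on all of $\D$ (which transports $A(G)$-structure outward from the $L^2$-regime) that allows the obstruction to be resolved. Once (a) is established, combining with (b) and (c) yields $\boldsymbol{\Lambda}(G) \leq 1$, and equality follows from the trivial lower bound $\boldsymbol{\Lambda}(G) \geq 1$.
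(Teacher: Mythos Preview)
The paper does not give a proof of this theorem: it is stated with attribution to Valette and a reference to \cite{Val2}, consistently with the paper's announced policy that results other than Theorem~\ref{Thm_wa_CBAP} are presented without proof. So there is no in-paper argument to compare against; what follows is an assessment of your sketch on its own merits.

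Your items (b) and (c) are correct and routine, and you are right that (a) is where the content lies. However, the route you outline for (a) has two concrete gaps. First, the claim that $\varphi_r\in L^2(G)$ for small $r$ amounts to $\sum_{n\ge 0} r^{2n}\,\mu\big(L^{-1}(\{n\})\big)<\infty$; this requires at most exponential growth of the level-set measures, which nothing in the stated hypotheses guarantees for a general locally compact $G$. Second, even if $\varphi_{r_0}\in A(G)$ for some small $r_0$, the multiplicative identity $\varphi_a\varphi_b=\varphi_{ab}$ together with the ideal property $A(G)\trianglelefteq B(G)$ only propagates membership \emph{downward} in $t$ (toward $0$), not upward toward $1$; and, as you yourself note, the contour-integral idea produces limits in $M_0A(G)$, not in $A(G)$.

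The cleaner strategy is to abandon the goal ``$\varphi_t\in A(G)$'' and instead approximate. Writing $\chi_n=\mathds{1}_{\{L=n\}}$, the Cauchy formula
\[
\chi_n(s)=\frac{1}{2\pi i}\oint_{|z|=\rho}\frac{\varphi_z(s)}{z^{n+1}}\,dz
\]
combined with a locally uniform bound $|\pi_z|\le C_\rho$ on $|z|=\rho$ gives $\|\chi_n\|_{M_0A(G)}\le C_\rho^{\,2}\rho^{-n}$. Hence the truncations $\psi_{t,N}=\sum_{n\le N}t^n\chi_n$ satisfy $\|\psi_{t,N}-\varphi_t\|_{M_0A(G)}\to 0$ as $N\to\infty$ (choose $\rho\in(t,1)$), so $\|\psi_{t,N}\|_{M_0A(G)}\to 1$. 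Each $\psi_{t,N}$ is continuous with compact support; for discrete $G$ (which covers all the intended applications --- Coxeter groups, groups acting on $\operatorname{CAT}(0)$ cube complexes) this means finitely supported, hence in $A(G)$, and a diagonal choice in $(t,N)$ finishes. For non-discrete $G$ one still has to pass from compactly supported $M_0A(G)$-functions to $A(G)$; that step is where the full argument in \cite{Val2} should be consulted.
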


This result was established in order to show that right-angled Coxeter groups are weakly amenable; see \cite{Jan}. Afterwards, Guentner and Higson \cite{GueHig} proved something much stronger. They showed that any group acting properly on a finite-dimensional $\operatorname{CAT}(0)$ cube complex satisfies the hypotheses of Theorem \ref{Thm_Valette_ubr}. We will not define $\operatorname{CAT}(0)$ cube complexes here, we only mention that a product of $N$ trees is an example of an $N$-dimensional $\operatorname{CAT}(0)$ cube complex; for details, we refer the reader to \cite[\S 2]{GueHig}, and to \cite[\S 2.A]{Cor} for examples of groups satisfying the hypotheses of Theorem \ref{Thm_GueHig} below. This result was proved independently by Mizuta using radial Schur multipliers; see Section \ref{Ssec_rad_Schur}.

\begin{thm}[Guentner--Higson, Mizuta]\label{Thm_GueHig}
Let $G$ be a countable group acting properly on a finite-dimensional $\operatorname{CAT}(0)$ cube complex. Then $G$ is weakly amenable and $\boldsymbol\Lambda(G)=1$.
\end{thm}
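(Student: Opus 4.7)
The plan is to reduce the statement to Theorem \ref{Thm_Valette_ubr}. Concretely, given a countable group $G$ acting properly on a finite-dimensional $\operatorname{CAT}(0)$ cube complex $X$, I must produce a proper length function $L:G\to\N$ with $L(e)=0$, together with an analytic family of uniformly bounded representations $(\pi_z)_{z\in\D}$ on some Hilbert space $\mathcal{H}$, unitary for real $z\in(0,1)$, and a vector $\xi\in\mathcal{H}$ satisfying $\langle\pi_z(s)\xi,\xi\rangle=z^{L(s)}$. Once these are in place, Valette's theorem delivers $\boldsymbol\Lambda(G)=1$ immediately.

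For the length function, I would use the combinatorial (that is, edge-path) distance on the $1$-skeleton of $X$. Fix a vertex $x_0$ and set $L(s)=d(x_0,sx_0)$, which is a nonnegative integer with $L(e)=0$. Because the action is proper and simplicial, for every $n$ the set $\{s\in G\mid L(s)\leq n\}$ is contained in the stabiliser system of the finite vertex set at distance $\leq n$ from $x_0$, hence is finite; as $G$ is discrete this gives properness and continuity of $L$. The key combinatorial feature of $\operatorname{CAT}(0)$ cube complexes that I will exploit is Sageev's hyperplane description: the edges of a geodesic from $x_0$ to $sx_0$ are in bijection with the hyperplanes separating $x_0$ from $sx_0$, so $L(s)$ equals the number of such hyperplanes.

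For the analytic family, I would follow Guentner--Higson and build $(\pi_z)_{z\in\D}$ on $\mathcal{H}=\ell^2(\mathcal{H}_X)$, where $\mathcal{H}_X$ is the set of hyperplanes of $X$, orienting each hyperplane $h$ by declaring its \emph{positive} half-space to be the one not containing $x_0$. Using the pocset structure of half-spaces, one assigns to each pair $(s,h)$ a sign or weight recording whether $s$ preserves or reverses the chosen orientation of $h$ relative to $x_0$, and one defines $\pi_z(s)$ by combining the permutation action of $s$ on hyperplanes with multiplication by $z$ on each "flipped" coordinate, leaving unflipped coordinates untouched. A direct calculation with a suitable distinguished vector $\xi$ (essentially built out of an exhaustion of hyperplanes near $x_0$, or from the characteristic function of hyperplanes separating $x_0$ from an ideal point) then gives the coefficient identity
\begin{equation*}
\langle\pi_z(s)\xi,\xi\rangle=z^{L(s)},\qquad s\in G,\ z\in\D,
\end{equation*}
because only the $L(s)$ hyperplanes separating $x_0$ from $sx_0$ contribute a factor of $z$. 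Holomorphy in $z$ is automatic from the construction, and unitarity for $z\in(0,1)$ follows from a Gram-matrix computation using the median structure.

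The main obstacle, and the one where finite-dimensionality is decisive, is proving the uniform bound $\sup_{z\in\D}|\pi_z|<\infty$. This amounts to controlling, for each $s\in G$ and each hyperplane $h$, the cardinality of the set of hyperplanes that interact non-trivially with $h$ under the action of $s$; in a finite-dimensional complex the number of pairwise transverse hyperplanes at any vertex is bounded by $\dim(X)$, and this combinatorial bound converts an otherwise only formal expression for $\pi_z(s)$ into a genuine bounded operator with norm controlled independently of $z$ and $s$. Once this uniform boundedness is secured, the four hypotheses of Theorem \ref{Thm_Valette_ubr} are in place and the weak amenability of $G$ with $\boldsymbol\Lambda(G)=1$ follows.
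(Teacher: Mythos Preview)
Your overall strategy---reduce to Theorem \ref{Thm_Valette_ubr} by constructing an analytic family of uniformly bounded representations with the coefficient identity $\langle\pi_z(s)\xi,\xi\rangle=z^{L(s)}$---is exactly the Guentner--Higson route that the paper describes (the paper does not give a proof, but explicitly says that Guentner and Higson proved the result by verifying the hypotheses of Valette's theorem). The paper also records Mizuta's independent proof via radial Schur multipliers: he shows that on a finite-dimensional $\operatorname{CAT}(0)$ cube complex the indicators $\chi_n$ of distance-$n$ pairs have polynomially bounded Schur norm, and combines this with positive definiteness of $e^{-\lambda d}$ to run the truncated-exponential argument used for trees.

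One point of caution on your sketch of the representations. The Hilbert space in the Guentner--Higson construction is not $\ell^2(\mathcal{H}_X)$ with the ``flip by $z$'' action you describe; on that space there is no obvious vector $\xi$ producing $z^{L(s)}$ (your candidates---an exhaustion near $x_0$ or a characteristic function of separating hyperplanes---are either not in $\ell^2$ or do not give the right coefficient). In their paper the underlying space is built from the cubes of $X$ (equivalently, from finite \emph{pairwise transverse} collections of hyperplanes), the distinguished vector is the vacuum $\delta_{x_0}$, and the operators $\pi_z(s)$ are obtained by deforming a Julg--Valette type cocycle. Finite-dimensionality enters precisely because only sets of at most $\dim X$ hyperplanes can be pairwise transverse, which is what makes the deformation uniformly bounded. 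So your identification of where the dimension hypothesis bites is right, but the concrete model you wrote down would need to be replaced by the actual Guentner--Higson space before the coefficient and uniform-boundedness claims go through.
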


\begin{rmk}
The finite dimension hypothesis in Theorem \ref{Thm_GueHig} is essential. Infinite-dimensional complexes allow much more pathological behaviours; see Theorem \ref{Thm_Osa} and Theorem \ref{Thm_wa->ex}.
\end{rmk}

A similar construction of representations was given in \cite{CosMar} for mapping class groups; however, it is not clear whether it can be used to prove weak amenability. The following question remains open.

\begin{ques}
Are mapping class groups of compact punctured surfaces weakly amenable?
\end{ques}

\subsection{Weak amenability via radial Schur multipliers}\label{Ssec_rad_Schur}
Recall that (the set of vertices of) a connected graph $X$ may be viewed as a metric space when we endow it with the edge-path distance, i.e. the length of the shortest path joining each pair of vertices. For each $n\in\N$, we can define $\chi_n:X\times X\to\{0,1\}$ by
\begin{align}\label{chi_n}
\chi_n(x,y)=\begin{cases}
1, & \text{if } d(x,y)=n,\\
0, & \text{otherwise.}
\end{cases}
\end{align}
Observe that $\chi_n$ depends only on the distance on $X$. We will call this kind of functions radial. The following result is the starting point for the study of weak amenability via radial Schur multipliers; see \cite[Proposition 2.1]{BozPic}. Recall that a tree is a connected graph without cycles.

\begin{thm}[Bo\.{z}ejko--Picardello]\label{Thm_Boz-Pic}
Let $X$ be a tree, and let $\chi_n:X\times X\to\{0,1\}$ be defined as in \eqref{chi_n}. Then, for every $n\in\N$, $\chi_n$ is a Schur multiplier on $X$, and
\begin{align*}
\|\chi_n\|_{V(X)}\leq 2n,\quad\forall n\geq 1.
\end{align*}
\end{thm}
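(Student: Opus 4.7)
The plan is to invoke Theorem \ref{Thm_Schur_mult} and produce bounded maps $\xi, \eta : X \to \mathcal{H}$ with $\chi_n(x,y) = \langle \xi(x), \eta(y)\rangle$ and $\sup\|\xi\| \cdot \sup\|\eta\| \le 2n$. The idea is to exploit the rooted structure of the tree via iterated ancestor maps, then reduce the problem to a nuclear-norm estimate for a small Gram matrix.

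I would start by fixing a base vertex $o \in X$ and, if necessary, embedding $X$ into a larger tree $X'$ obtained by attaching an auxiliary ray of length $n$ at $o$, and re-rooting at the far endpoint of the ray, so that every vertex of $X$ has depth at least $n$. Writing $p^k(x)$ for the $k$-th ancestor of $x$ in $X'$, and letting $e_0, \ldots, e_n, f_0, \ldots, f_n$ be vectors in an auxiliary Hilbert space $\mathcal{K}$ to be determined, I would set
\[
\xi(x) = \sum_{k=0}^n \delta_{p^k(x)} \otimes e_k, \qquad \eta(y) = \sum_{l=0}^n \delta_{p^l(y)} \otimes f_l, \qquad x,y \in X,
\]
viewed as elements of $\ell^2(X') \otimes \mathcal{K}$.

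For $x, y \in X$ with confluent $c = x \wedge y$, setting $j = |x| - |c|$ and $j' = |y| - |c|$ so that $d(x,y) = j + j'$, a direct tree-geometric calculation (common ancestors of $x$ and $y$ are precisely the ancestors of $c$) gives
\[
\langle \xi(x), \eta(y)\rangle \;=\; \sum_{\substack{s \ge 0 \\ j+s \le n,\ j'+s \le n}} \langle e_{j+s},\, f_{j'+s}\rangle.
\]
Imposing that this equal $\mathbf{1}[j+j' = n]$ for all admissible $(j,j')$ telescopes downward in $\max(j,j')$ into the Gram conditions
\[
\langle e_j, f_l\rangle \;=\; G_{jl} := \mathbf{1}[j+l = n] - \mathbf{1}[j+l = n-2], \qquad 0 \le j, l \le n.
\]
I would realize such vectors via the singular value decomposition $G = U\Sigma V^*$, taking $e_j$ and $f_l$ as the rows of $U\Sigma^{1/2}$ and $V\Sigma^{1/2}$ respectively; this yields $\bigl(\sum_j \|e_j\|^2\bigr)^{1/2}\bigl(\sum_l \|f_l\|^2\bigr)^{1/2} = \|G\|_*$, the trace (nuclear) norm.

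Since $\sup_x \|\xi(x)\|^2 \le \sum_k \|e_k\|^2$ and likewise for $\eta$, it remains to show $\|G\|_* \le 2n$. I would decompose $G = G_+ - G_-$, where $G_+$ is the flip permutation $j \mapsto n - j$ of $\{0,\ldots,n\}$ and $G_-$ is the analogous rank-$(n-1)$ partial permutation; since both are partial isometries, their nuclear norms equal their ranks, $n+1$ and $n-1$, and the triangle inequality gives $\|G\|_* \le 2n$. The main obstacle I anticipate is spotting the correct Gram matrix $G$: the correction term $-\mathbf{1}[j+l = n-2]$ is not apparent \emph{a priori} and emerges only after carefully running the telescoping computation. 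The ray-augmentation is a small technical device ensuring that the constraint $s \le |c|$ never binds, so that the telescoping genuinely produces $\mathbf{1}[d(x,y) = n]$ on all pairs of original vertices; once these two points are in hand, the trace-norm estimate is routine.
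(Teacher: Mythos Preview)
The paper does not give a proof of this statement: as noted in the introduction, this is a survey in which only Theorem~\ref{Thm_wa_CBAP} is proved in full, and Theorem~\ref{Thm_Boz-Pic} is simply stated with a reference to~\cite{BozPic}.

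Your argument is correct. The ray-augmentation guarantees that for $x,y$ in the original tree the confluent has depth at least $n$, so the index $s$ genuinely ranges over $0\le s\le n-\max(j,j')$ and the telescoping yields $\mathbf 1[j+j'=n]$ exactly. The nuclear-norm bound $\|G\|_*\le(n{+}1)+(n{-}1)=2n$ via the partial-permutation decomposition is also correct. One small remark: since the vectors $\delta_{p^k(x)}$ for $k=0,\dots,n$ are orthonormal in $\ell^2(X')$, your inequality $\sup_x\|\xi(x)\|^2\le\sum_k\|e_k\|^2$ is actually an equality, but this changes nothing.

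It is worth noting that your construction is a repackaging of the classical argument rather than a different route. Unwinding the Gram matrix, what you have proved is the identity
\[
\chi_n(x,y)\;=\;\sum_{j+l=n}\bigl\langle\delta_{p^j(x)},\delta_{p^l(y)}\bigr\rangle\;-\;\sum_{j+l=n-2}\bigl\langle\delta_{p^j(x)},\delta_{p^l(y)}\bigr\rangle,
\]
expressing $\chi_n$ as a signed sum of $2n$ norm-one Schur multipliers of the form $(x,y)\mapsto\langle\delta_{p^j(x)},\delta_{p^l(y)}\rangle$. This is precisely the decomposition underlying the original Bo\.zejko--Picardello proof, and your nuclear-norm step $\|G_+-G_-\|_*\le\|G_+\|_*+\|G_-\|_*$ is just the triangle inequality applied to those $2n$ pieces. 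The SVD packaging is elegant but not essential; the ``correction term'' $-\mathbf 1[j+l=n-2]$ that you flag as the main obstacle is, in the classical presentation, simply the subtraction needed to pass from ``$p^j(x)=p^{n-j}(y)$'' to ``$p^j(x)=p^{n-j}(y)$ is the confluent''.
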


This result was proved in order to show that certain amalgamated free products are weakly amenable; see Theorem \ref{Thm_Boz-Pic2}. The polynomial growth of $\|\chi_n\|_{V(X)}$ can be combined with the fact that the kernel
\begin{align}\label{pd_ker_d}
(x,y)\in X\times X \longmapsto e^{-\lambda d(x,y)}
\end{align}
is positive definite for all $\lambda>0$, which allows one to show that there is a sequence of truncated maps
\begin{align*}
\varphi_n(x,y)=\sum_{j=0}^{k_n}\chi_j(x,y)e^{-j/n},\quad\forall x,y\in X,
\end{align*}
such that $\varphi_n$ converges to $1$ pointwise, and $\|\varphi_n\|_{V(X)}\to 1$; see \cite[\S 12.3]{BroOza} for details. Then, if $G$ is a group acting properly by isometries on $X$, fixing a vertex $x_0\in X$, one can consider the sequence of Herz--Schur multipliers
\begin{align*}
s\in G \longmapsto \varphi_n(sx_0,x_0),
\end{align*}
and show that $\boldsymbol\Lambda(G)=1$. In particular, this holds for free groups and $\operatorname{SL}(2,\mathbb{Q}_p)$.

Theorem \ref{Thm_Boz-Pic} has also been used to prove weak amenability of $\operatorname{GL}(2,F)$, where $F$ is a field. Similarly to how Theorem \ref{Thm_BS(m,n)} was obtained, it was first proved in \cite{GuHiWe} that $\operatorname{GL}(2,F)$ has the Haagerup property, and the argument was later adapted in \cite[Theorem 1.11]{KnuLi} for weak amenability.

\begin{thm}[Knudby--Li]
Let $F$ be a field. Then $\operatorname{GL}(2,F)$, as a discrete group, is weakly amenable and $\boldsymbol\Lambda(\operatorname{GL}(2,F))=1$.
\end{thm}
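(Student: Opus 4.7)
The plan, following the strategy of Theorem \ref{Thm_BS(m,n)} and the Haagerup-property argument of \cite{GuHiWe}, is to realise the discrete group $\operatorname{GL}(2,F)$ as a discrete, hence closed, subgroup of a finite product of locally compact groups each having Cowling--Haagerup constant $1$, and then to conclude via the subgroup and product formulas of Section \ref{Sec_wa}. The opening move is a reduction to finitely generated $F$: any finitely generated subgroup of $\operatorname{GL}(2,F)$ sits inside $\operatorname{GL}(2,F_0)$ for the finitely generated subfield $F_0$ generated by the matrix entries of a generating set. The proposition expressing $\boldsymbol\Lambda$ of a locally compact group as the limit over its compactly generated open subgroups --- equivalently, for a discrete group, its finitely generated subgroups --- therefore reduces matters to $\operatorname{GL}(2,F_0)$.

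Assume then that $F$ is finitely generated. The arithmetic input supplied by \cite{GuHiWe} provides a finite family of places $v_1,\ldots,v_k$ of $F$ (non-archimedean valuations, together with archimedean embeddings when $\mathrm{char}(F)=0$) such that the diagonal map $F \hookrightarrow \prod_{i=1}^k F_{v_i}$ into the product of local completions has discrete image. Consequently
\begin{align*}
\operatorname{GL}(2,F)\ \hookrightarrow\ \prod_{i=1}^k \operatorname{GL}(2,F_{v_i})
\end{align*}
is a discrete, hence closed, embedding into a locally compact group.

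It remains to show $\boldsymbol\Lambda(\operatorname{GL}(2,K))=1$ for every local field $K$. In the non-archimedean case, $\operatorname{PGL}(2,K)$ acts on its Bruhat--Tits tree with vertex stabilizer $\operatorname{PGL}(2,\mathcal{O}_K)$, which is compact, so Theorem \ref{Thm_Szwarc} yields $\boldsymbol\Lambda(\operatorname{PGL}(2,K))=1$. To transfer this to $\operatorname{GL}(2,K)$, whose vertex stabilizers contain the non-compact centre $K^{\ast}$, one considers the homomorphism
\begin{align*}
g \longmapsto (\pi(g),\det g) \in \operatorname{PGL}(2,K)\times K^{\ast},
\end{align*}
which has finite kernel $\{\pm I\}$ and closed image; since $K^{\ast}$ is abelian and hence has $\boldsymbol\Lambda=1$ by Proposition \ref{Prop_amen_wa}, the product formula, the subgroup bound, and the compact-normal-quotient formula from Section \ref{Sec_wa} combine to give $\boldsymbol\Lambda(\operatorname{GL}(2,K))=1$. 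In the archimedean case $K\in\{\mathbb{R},\mathbb{C}\}$, the identity component of $\operatorname{GL}(2,K)$ is a connected Lie group whose Levi decomposition has a simple part of real rank one commuting with its abelian radical, so Theorems \ref{Thm_Lie} and \ref{Thm_Lie3} give $\boldsymbol\Lambda=1$ there, and the same finite-index-supergroup considerations as above propagate this to all of $\operatorname{GL}(2,K)$. Combining via multiplicativity and the subgroup bound yields $\boldsymbol\Lambda(\operatorname{GL}(2,F))=1$.

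The main obstacle is the arithmetic step: producing, uniformly over arbitrary finitely generated fields, the finite family of places that ensures a discrete diagonal embedding. This is the genuine content of \cite{GuHiWe} and relies on valuation-theoretic input specific to finitely generated fields. The technical addition of \cite{KnuLi} over \cite{GuHiWe} is that one must replace the positive-definite-function approximate units sufficient for the Haagerup property by approximate units of unit norm in $M_0A$; thanks to Theorem \ref{Thm_wa_CBAP} and the stability properties recorded in Section \ref{Sec_wa}, these pull back cleanly from the factor groups $\operatorname{GL}(2,F_{v_i})$ down to the discrete subgroup $\operatorname{GL}(2,F)$.
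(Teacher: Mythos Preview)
Your outline is correct and matches the strategy the paper attributes to Knudby--Li: reduce to finitely generated $F$ via the directed-limit proposition, invoke the discrete diagonal embedding of \cite{GuHiWe} into a finite product $\prod_i\operatorname{GL}(2,F_{v_i})$, verify $\boldsymbol\Lambda=1$ for each local factor, and conclude by the subgroup and product formulas. The paper gives no argument beyond pointing to this scheme, so your proposal is faithful to what is described there.

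Two small points. First, the paper situates this result under Theorem~\ref{Thm_Boz-Pic} (the Bo\.{z}ejko--Picardello radial-multiplier estimate) rather than Theorem~\ref{Thm_Szwarc}; for the purpose of showing that a group acting on a locally finite tree with compact vertex stabilisers has $\boldsymbol\Lambda=1$, the two tools are interchangeable, so this is cosmetic. It is worth noting, though, that Knudby--Li also treat $\operatorname{SL}_2$ over general finitely generated commutative rings, where no embedding into a product of local fields is available; there the Bo\.{z}ejko--Picardello estimate is used more directly, which is presumably why the paper files the theorem in that subsection.

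Second, your handling of the archimedean factor $\operatorname{GL}(2,\mathbb{R})$ appeals to ``finite-index-supergroup considerations'', but no stability statement of that form appears among the propositions of Section~\ref{Sec_wa} (and it is not what you actually did ``above'', which was a finite-\emph{kernel} argument). The cleanest repair, using only what the paper records, is to observe that $\operatorname{GL}(2,\mathbb{R})$ embeds as a closed subgroup of the \emph{connected} group $\operatorname{GL}(2,\mathbb{C})$, to which Theorem~\ref{Thm_Lie3} applies directly since $\mathfrak{gl}(2,\mathbb{C})_{\mathbb{R}}=\mathfrak{sl}(2,\mathbb{C})_{\mathbb{R}}\oplus\mathbb{R}^2$ with $[\mathfrak{s},\mathfrak{r}]=0$ and $\operatorname{SL}(2,\mathbb{C})$ of finite centre.
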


As a consequence of a similar result for commutative rings, they also obtained the following; see \cite[Corollary 3.12]{KnuLi}.

\begin{thm}[Knudby--Li]
Residually free groups are weakly amenable with Cowling--Haagerup constant $1$.
\end{thm}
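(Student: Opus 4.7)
The plan is to realize $G$ as a subgroup of $\operatorname{GL}(2,R)$ for a suitable commutative unital ring $R$, and then invoke the commutative-ring extension of the preceding Knudby--Li theorem --- this is the ``similar result for commutative rings'' explicitly cited in the paragraph before the statement. Combined with the subgroup stability of $\boldsymbol\Lambda$ from Section \ref{Sec_wa}, this at once forces $\boldsymbol\Lambda(G) \leq 1$, and hence equality.

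The embedding is built in two steps. First, I would embed $G$ into a direct product of finitely generated free groups. By residual freeness, for each $g \in G \setminus \{e\}$ there is a homomorphism $\phi_g : G \to F_g$ into a free group with $\phi_g(g) \neq e$. Since $\phi_g(g)$ is a reduced word using only finitely many generators of $F_g$, composing $\phi_g$ with the obvious retraction of $F_g$ onto the free subgroup generated by those letters, I may assume that each $F_g$ is a finitely generated free group. The diagonal homomorphism
\begin{align*}
\Phi = \prod_{g \neq e} \phi_g : G \hookrightarrow \prod_{g \neq e} F_g
\end{align*}
is then injective. Second, every finitely generated free group embeds into $F_2 \leq \operatorname{SL}(2,\Z)$ via the classical generators $\left(\begin{smallmatrix} 1 & 2 \\ 0 & 1 \end{smallmatrix}\right)$ and $\left(\begin{smallmatrix} 1 & 0 \\ 2 & 1 \end{smallmatrix}\right)$, and $\operatorname{GL}(2,\cdot)$ is functorial with respect to arbitrary products of unital commutative rings:
\begin{align*}
\prod_{g \neq e} \operatorname{GL}(2,\Z) = \operatorname{GL}\!\bigl(2, \textstyle\prod_{g \neq e} \Z\bigr).
\end{align*}
Setting $R = \prod_{g \neq e} \Z$, a commutative unital ring, I thus obtain an injective homomorphism $G \hookrightarrow \operatorname{GL}(2,R)$ of abstract (discrete) groups.

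The main obstacle is of course the commutative-ring generalization of $\boldsymbol\Lambda(\operatorname{GL}(2,F)) = 1$: passing from a field to an arbitrary commutative ring loses access to the Bruhat--Tits tree that underlies the field case, and requires substantial additional work in the Knudby--Li paper. Once that deeper input is available, the reduction above is entirely formal: the subgroup stability proposition yields $\boldsymbol\Lambda(G) \leq \boldsymbol\Lambda(\operatorname{GL}(2,R)) = 1$, and $\boldsymbol\Lambda(G) \geq 1$ is automatic, concluding the proof.
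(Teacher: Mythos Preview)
Your proposal is correct and follows exactly the route the paper indicates: the text does not give a standalone proof but simply says the result is ``a consequence of a similar result for commutative rings'' and cites \cite[Corollary 3.12]{KnuLi}, and your argument --- embed a residually free group diagonally into a product of free groups, then into $\operatorname{GL}(2,\prod\Z)$ via $F_2\hookrightarrow\operatorname{SL}(2,\Z)$ and the identification $\prod\operatorname{GL}(2,\Z)=\operatorname{GL}(2,\prod\Z)$, and finally apply the Knudby--Li commutative-ring theorem together with subgroup stability of $\boldsymbol\Lambda$ --- is precisely how that corollary is deduced. You have also correctly flagged that the commutative-ring version of the $\operatorname{GL}(2,\cdot)$ result is the genuine analytic input here, with everything else being formal.
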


As mentioned above, the idea of proving weak amenability using truncated exponential functions has been extended to much more general contexts. In \cite{Oza}, Ozawa showed that, although the kernels \eqref{pd_ker_d} are no longer positive definite on a general hyperbolic graph, they do define a uniformly bounded semigroup of Schur multipliers, provided that the graph has bounded geometry. This fact, combined with an estimate similar to Theorem \ref{Thm_Boz-Pic}, allows one to prove weak amenability in similar fashion; see \cite[Theorem 1]{Oza}. We refer the reader to \cite[\S 7]{Loh} for details on hyperbolic spaces and groups.

\begin{thm}[Ozawa]\label{Thm_Ozawa_wa}
Hyperbolic groups are weakly amenable.
\end{thm}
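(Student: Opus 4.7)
The proof follows the strategy hinted at in the text, paralleling Theorem \ref{Thm_Boz-Pic} but with the positive definiteness of $e^{-\lambda d}$ replaced by a uniform Schur multiplier bound. Fix a finite symmetric generating set and let $X$ be the corresponding Cayley graph, equipped with the (left-invariant) word metric $d$ and word length $|\cdot|$. Since $G$ is hyperbolic and $X$ has bounded geometry, the goal is to produce, for each small $t > 0$, a finitely supported function $\varphi_t : G \to \C$ satisfying
\begin{align*}
\sup_{t>0}\|\varphi_t\|_{M_0A(G)} < \infty \qquad \text{and} \qquad \varphi_t \to 1 \text{ pointwise as } t \to 0^+.
\end{align*}
By left-invariance of $d$, the kernel $\widetilde{\varphi_t}(x,y) = \varphi_t(y^{-1}x)$ depends only on $d(x,y)$, so by Theorem \ref{Thm_Boz-Fen} the problem reduces to controlling the Schur-multiplier norms of radial kernels on $X$.

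The heart of the argument is the following geometric estimate: there exists a constant $C$ such that for every $t > 0$ the kernel $\psi_t(x,y) = e^{-t\, d(x,y)}$ on $X\times X$ is a Schur multiplier with $\|\psi_t\|_{V(X)} \leq C$. I would prove this using the Hilbert-space factorisation of Theorem \ref{Thm_Schur_mult}. On a tree the natural factorisation takes $\xi_t(x)$ to be an $\ell^2$-valued vector supported on the geodesic from a basepoint to $x$ with weights $e^{-tk/2}$; the inner product then recovers $e^{-td(x,y)}$ \emph{exactly}, because two geodesics from the basepoint fellow-travel along a common initial segment and split cleanly. In a general hyperbolic graph this cleanness fails, but only by an error controlled by the hyperbolicity constant $\delta$ and the bounded valence, thanks to the thin-triangles property. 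A Mineyev-type bicombing (or any canonical choice of quasi-geodesics on $X$) lets one define $\xi_t(x)$ along the chosen paths so that $\langle\xi_t(x),\xi_t(y)\rangle$ agrees with $\psi_t(x,y)$ up to a multiplicative constant independent of $t$ and of the pair $(x,y)$. This is the main obstacle, and is where hyperbolicity enters essentially. A parallel analysis on the characteristic kernels of distance spheres yields a polynomial bound $\|\chi_n\|_{V(X)} \leq P(n)$, the analogue on $X$ of Theorem \ref{Thm_Boz-Pic}, proved by splitting a pair $(x,y)$ at a quasi-midpoint of a geodesic.

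With these two estimates in hand, for each $t > 0$ and each $N \in \N$ set
\begin{align*}
\psi_{t,N}(x,y) = \sum_{n=0}^{N} e^{-tn}\,\chi_n(x,y), \qquad \varphi_{t,N}(s) = \begin{cases} e^{-t|s|} & |s|\leq N,\\ 0 & |s|>N,\end{cases}
\end{align*}
so that $\widetilde{\varphi_{t,N}}(x,y) = \psi_{t,N}(x,y)$ by left-invariance of $d$. The tail estimate
\begin{align*}
\|\psi_t - \psi_{t,N}\|_{V(X)} \leq \sum_{n>N} e^{-tn} P(n) \xrightarrow[N\to\infty]{} 0
\end{align*}
then yields $\|\psi_{t,N}\|_{V(X)} \leq C+1$ for $N$ sufficiently large, and therefore $\|\varphi_{t,N}\|_{M_0A(G)} \leq C+1$ by Theorem \ref{Thm_Boz-Fen}. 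Each $\varphi_{t,N}$ is finitely supported, hence lies in $A(G)$, and choosing $N=N(t)\to\infty$ as $t\to 0^+$ gives $\varphi_{t,N(t)}\to 1$ uniformly on finite subsets of $G$. This exhibits $G$ as weakly amenable with $\boldsymbol\Lambda(G)\leq C+1$.
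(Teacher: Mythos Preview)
Your proposal is correct and follows precisely the approach that the paper sketches (which is Ozawa's original argument in \cite{Oza}): a uniform Schur-multiplier bound on the semigroup $e^{-td}$ obtained via a Mineyev-type bicombing, combined with a polynomial estimate on $\|\chi_n\|_{V(X)}$, yields weak amenability through the standard truncation argument. The paper itself gives no detailed proof of this theorem, only the outline you have reproduced.
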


Theorem \ref{Thm_Ozawa_wa} says that $\boldsymbol\Lambda(\Gamma)$ is finite for every hyperbolic group $\Gamma$, but it does not provide an estimate on its value. If one examines the proof, one can find an upper bound in terms of the maximal degree and the hyperbolicity constant of the graph on which the group acts, but it is not possible to get a uniform bound for all hyperbolic groups. Indeed, uniform lattices in $\operatorname{Sp}(n,1)$ are hyperbolic for all $n\geq 2$, and we know from Theorem \ref{Thm_Lie} that such a lattice satisfies $\boldsymbol\Lambda(\Gamma)=2n-1$.

These ideas have also been extended to some relatively hyperbolic groups \cite{GuReTe}, and groups acting on products of hyperbolic graphs \cite{Ver}.

As mentioned earlier, radial Schur multipliers were also used to prove Theorem \ref{Thm_GueHig}. In \cite{Miz}, Mizuta extended Theorem \ref{Thm_Boz-Pic} to all finite-dimensional $\operatorname{CAT}(0)$ cube complexes. In this case, the growth of $\|\chi_n\|_{V(X)}$ is no longer linear, but it is still polynomial, and the degree depends on the dimension of the complex. Then the fact that the semigroup \eqref{pd_ker_d} is positive definite on any $\operatorname{CAT}(0)$ cube complex allows one to repeat the argument for trees in this more general setting.

\subsection{Free products}
As mentioned above, the main goal of Theorem \ref{Thm_Boz-Pic} was to prove weak amenability of amalgamated free products, by exploiting the intimate connection that exists between free products and trees; see \cite{Ser}. We now state the main result of \cite{BozPic}.

\begin{thm}[Bo\.{z}ejko--Picardello]\label{Thm_Boz-Pic2}
Let $(G_i)_{i\in I}$ be a family of amenable locally compact groups, and let $A$ be an open compact subgroup of $G_i$ for every $i\in I$. Then the amalgamated free product $G=\ast_A G_i$ is weakly amenable and $\boldsymbol\Lambda(G)=1$.
\end{thm}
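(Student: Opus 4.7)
The plan is to exploit the action of $G=\ast_A G_i$ on its Bass--Serre tree $X$: vertices are cosets $gG_i$ and edges are cosets $gA$. Since $A$ is open compact in each $G_i$, the subgroups $G_i$ embed as open subgroups of $G$, so $X$ is a discrete $G$-space with open amenable vertex stabilizers and open compact edge stabilizers. I would construct a bounded approximate identity in $M_0A(G)$ by pulling back radial multipliers on $X$, and then move the construction into $A(G)$.

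The first step is to build radial Schur multipliers on $X$ with norm tending to $1$. Theorem~\ref{Thm_Boz-Pic} gives $\|\chi_n\|_{V(X)}\leq 2n$ for the indicator $\chi_n$ of the distance-$n$ set. On any tree the kernel $(x,y)\mapsto e^{-\lambda d(x,y)}$ is positive definite for every $\lambda>0$, and hence a Schur multiplier of norm at most one; setting $\psi_{t,N}(x,y)=\sum_{n=0}^N t^{n}\chi_n(x,y)$ for $t\in(0,1)$, the polynomial bound yields $\|\psi_{t,N}-\rho_t\|_{V(X)}\leq \sum_{n>N}2n\,t^n\to 0$ as $N\to\infty$, where $\rho_t(x,y)=t^{d(x,y)}$. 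Choosing $N=N(t)$ appropriately produces finitely supported radial Schur multipliers $\psi_{t,N(t)}$ with $\|\psi_{t,N(t)}\|_{V(X)}\to 1$ and $\psi_{t,N(t)}\to 1$ pointwise as $t\to 1^-$.

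Next, fix $x_0\in X$ and define $\varphi_t(s)=\psi_{t,N(t)}(sx_0,x_0)$ on $G$. Since $\psi_{t,N(t)}$ is radial and $G$ acts by isometries, the kernel $(s,r)\mapsto \psi_{t,N(t)}(sx_0,rx_0)$ depends only on $r^{-1}s$. Pulling a Bo\.z{}ejko--Fendler decomposition of $\psi_{t,N(t)}$ back along $s\mapsto sx_0$ and invoking Theorem~\ref{Thm_Boz-Fen} yields $\varphi_t\in M_0A(G)$ with $\|\varphi_t\|_{M_0A(G)}\leq \|\psi_{t,N(t)}\|_{V(X)}$. Openness of $G_{x_0}$ in $G$ makes $X$ discrete, so $Kx_0$ is finite for every compact $K\subset G$; pointwise convergence $\psi_{t,N(t)}(\cdot,x_0)\to 1$ is therefore uniform on $Kx_0$, and $\varphi_t\to 1$ uniformly on $K$.

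A routine adjustment---multiplying $\varphi_t$ by an element of $A(G)$ of norm close to $1$ equal to $1$ on a prescribed compact set---pushes the approximants into $A(G)$ without affecting the $M_0A(G)$-bound or the convergence, giving $\boldsymbol\Lambda(G)\leq 1$, and the trivial lower bound yields equality. The main technical obstacle is calibrating $N(t)$ against $t\to 1^-$ so that both the norm convergence $\|\psi_{t,N(t)}\|_{V(X)}\to 1$ and uniform convergence of the pullback on compacta are preserved; this is precisely where the polynomial bound from Theorem~\ref{Thm_Boz-Pic} is essential. Amenability of the $G_i$ is used to guarantee that $\boldsymbol\Lambda(G_i)=1$, which is necessary because $\boldsymbol\Lambda(G)\geq \boldsymbol\Lambda(G_i)$ by the earlier inheritance proposition, and is therefore compatible with the target value $\boldsymbol\Lambda(G)=1$.
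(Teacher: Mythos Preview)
There is a genuine gap in the final step. You claim that multiplying $\varphi_t$ by an element $a\in A(G)$ with $\|a\|_{A(G)}$ close to $1$ and $a\equiv 1$ on a prescribed compact set is a ``routine adjustment''. But the existence of such elements for \emph{arbitrary} compact sets is precisely Leptin's characterisation of amenability (Theorem~\ref{Thm_Lep}): if for every compact $K\subset G$ and every $\varepsilon>0$ there were $a\in A(G)$ with $a|_K=1$ and $\|a\|_{A(G)}\le 1+\varepsilon$, then $A(G)$ would have a bounded approximate identity and $G$ itself would be amenable. The passage from \cite{KanLau} quoted in \S\ref{Ssec_Fourier_mult} only produces such an $a$ on a \emph{small} neighbourhood of a single point; for a prescribed large $K$ one can always find $a\in A(G)$ with $a|_K=1$, but with no control on $\|a\|_{A(G)}$, and then the estimate $\|\varphi_t a\|_{M_0A(G)}\le\|\varphi_t\|_{M_0A(G)}\|a\|_{A(G)}$ is useless for showing $\boldsymbol\Lambda(G)=1$.

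This is not a repairable technicality: your argument nowhere uses amenability of the $G_i$ in an essential way, so if it were valid it would prove $\boldsymbol\Lambda(\ast_A G_i)=1$ for \emph{arbitrary} $G_i$ sharing a common open compact subgroup $A$. Taking $G_1$ discrete with $\boldsymbol\Lambda(G_1)>1$ (for instance a lattice in $\operatorname{Sp}(n,1)$, by Theorem~\ref{Thm_Lie}) and $A=\{e\}$ yields an immediate contradiction via the subgroup inequality. The underlying issue is that your $\varphi_t$ are \emph{not} compactly supported: their support contains the full vertex stabiliser $G_{x_0}$, which is a conjugate of one of the noncompact groups $G_i$, so $\varphi_t$ has no reason to lie in $A(G)$. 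In Bo\.zejko and Picardello's actual argument, amenability of the $G_i$ is exactly what bridges this gap: compactly supported positive definite functions on each $G_i$ (available by amenability, and made bi-$A$-invariant by averaging over the compact $A$) are used to truncate $\varphi_t$ within each block of the free-product normal form, producing compactly supported elements of $A(G)$ while keeping the $M_0A(G)$-norm under control. Your sketch has the tree-multiplier half of the proof but is missing this second ingredient entirely.
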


In general, it is not known if weak amenability is preserved by free products, but some particular cases are known. The following was proved in  \cite[Theorem 4.13]{RicXu}.

\begin{thm}[Ricard--Xu]\label{Thm_RicXu}
Let $(G_i)_{i\in I}$ be a family of discrete groups such that $\boldsymbol\Lambda(G_i)=1$ for every $i\in I$. Then the free product $G=\ast_{i\in I} G_i$ is weakly amenable and $\boldsymbol\Lambda(G)=1$.
\end{thm}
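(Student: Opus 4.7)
My plan is to translate the problem to the operator algebra side via Theorem \ref{Thm_wa_CBAP}, and then to carry out a free product construction of approximating maps directly on the von Neumann algebra level. Concretely, by Theorem \ref{Thm_wa_CBAP}, the conclusion $\boldsymbol\Lambda(G)=1$ for $G=\ast_{i\in I}G_i$ is equivalent to $LG$ having the w*-CCAP. The key structural observation is that $LG$ can be identified with the (reduced, tracial) von Neumann algebra free product $\ast_{i\in I}(LG_i,\tau_i)$, where $\tau_i$ is the canonical trace coming from $\delta_e$. In this language, the statement becomes: a free product of tracial von Neumann algebras each having w*-CCAP still has w*-CCAP, with constant $1$. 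I would prove it in that form, and deduce the group-theoretic statement.

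The first step is a standard reduction. Using the directed-limit behaviour of the Cowling--Haagerup constant (the analogue of the colimit statement for compactly generated open subgroups, applied here to the net of finite subsets of $I$), it suffices to treat the case $|I|=2$. So I fix $M_1=LG_1$ and $M_2=LG_2$ with traces $\tau_1,\tau_2$, and use the hypothesis to extract, for each $\varepsilon>0$, finite-rank, unital, trace-preserving, weak*-continuous c.b.\ maps $T_j^\varepsilon:M_j\to M_j$ with $\|T_j^\varepsilon\|_{\mathrm{cb}}\le 1+\varepsilon$ converging to the identity in the point-weak* topology; the unitality and trace-preservation can be arranged by a routine normalisation, because the multipliers witnessing $\boldsymbol\Lambda(G_j)=1$ may be chosen to satisfy $\varphi_j(e)=1$, so the corresponding multiplier maps fix $1$ and preserve $\tau_j$.

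The core of the proof is the construction of a corresponding c.b.\ map $T^\varepsilon$ on $M=M_1\ast M_2$. The natural definition uses the free product Hilbert space decomposition $L^2(M)=\mathbb{C}\Omega\oplus\bigoplus_n\bigoplus_{i_1\ne i_2\ne\cdots\ne i_n}L^2(M_{i_1})^\circ\otimes\cdots\otimes L^2(M_{i_n})^\circ$, where $L^2(M_j)^\circ$ denotes the orthogonal complement of $\Omega$ in $L^2(M_j)$. Writing $x\in M$ in Wick form as a sum over reduced words $a_1\cdots a_n$ with $a_k\in M_{i_k}^\circ=\ker\tau_{i_k}$, one sets
\begin{equation*}
T^\varepsilon(a_1\cdots a_n)=\bigl(T_{i_1}^\varepsilon(a_1)\bigr)^\circ\cdots\bigl(T_{i_n}^\varepsilon(a_n)\bigr)^\circ,
\end{equation*}
extended by linearity and by $T^\varepsilon(1)=1$. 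The map so defined is finite-rank (each $T_j^\varepsilon$ is), unital, trace-preserving, weak*-continuous, and converges to the identity in point-weak* as $\varepsilon\to 0$; these are formal checks.

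The genuinely hard step, and the heart of \cite{RicXu}, is bounding $\|T^\varepsilon\|_{\mathrm{cb}}$ by something tending to $1$. This does not follow from any soft argument: free products of merely c.b.\ (as opposed to u.c.p.) maps generally do not admit clean c.b.\ norm estimates, because the Wick-ordering compression from $M$ onto the free product Hilbert space introduces corrections of order $\tau(T_j^\varepsilon(\cdot))$ that must be absorbed. I would follow the Ricard--Xu strategy: realise $T^\varepsilon$ as a composition of three operator-space morphisms, namely a Wick-type ``unfolding'' embedding of $M$ into a direct sum of tensor products of the $M_{i_k}$'s, a tensor product $\bigotimes T_{i_k}^\varepsilon$ acting diagonally on each summand (whose c.b.\ norm is controlled multiplicatively by $\prod(1+\varepsilon)$), and a ``refolding'' projection back onto the free product, and then show that the unfolding and refolding maps are completely contractive using Voiculescu's description of free products as compressions of free Fock spaces. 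This last estimate is the main obstacle and is precisely where the hypothesis that the $T_j^\varepsilon$ are unital and trace-preserving is essential, since it guarantees that the free-Fock-space realisations of $T^\varepsilon$ decompose into a diagonal operator whose CB norm is the product of the $\|T_j^\varepsilon\|_{\mathrm{cb}}$. Combining the three estimates yields $\|T^\varepsilon\|_{\mathrm{cb}}\le(1+\varepsilon)^2$, which proves $\boldsymbol\Lambda_{\mathrm{w^\ast cb}}(M)=1$, and hence $\boldsymbol\Lambda(G)=1$ by Theorem \ref{Thm_wa_CBAP}.
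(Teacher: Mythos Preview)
The paper does not prove this theorem; it only records that it is a corollary of \cite[Proposition 4.11]{RicXu}, a general CCAP result for reduced free products of $\mathbf{C}^*$-algebras. Your outline is therefore an attempt to reconstruct the Ricard--Xu argument itself, and its overall architecture---pass to operator algebras via Theorem \ref{Thm_wa_CBAP}, reduce to finitely many factors, choose unital trace-preserving finite-rank approximants on each factor, and assemble a free-product map through the Wick/Fock picture---is indeed theirs. But two steps in your sketch do not work as written.

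First, the map $T^\varepsilon$ you define is not finite rank. Even if each $T_j^\varepsilon$ has finite-dimensional range $V_j\subset M_j$, the free-product map sends a reduced word of length $n$ into $V_{i_1}^\circ\cdots V_{i_n}^\circ$, and summing over all $n$ gives an infinite-dimensional range. The Ricard--Xu proof repairs this by composing with a word-length truncation onto words of length at most $d$, and controlling the cb norm of that truncation separately; this is precisely where the free-probabilistic combinatorics (in the spirit of Theorem \ref{Thm_Boz-Pic}) enters. You have omitted this ingredient entirely.

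Second, the bound $\|T^\varepsilon\|_{\mathrm{cb}}\le(1+\varepsilon)^2$ does not follow from the factorisation you describe. On the length-$n$ summand the diagonal tensor $\bigotimes_{k=1}^n T_{i_k}^\varepsilon$ has cb norm at most $(1+\varepsilon)^n$, so taking the supremum over $n$ gives no finite bound when $\varepsilon>0$. The genuine Ricard--Xu estimate is finer: using their Khintchine-type inequalities they control the length-$d$ block by a quantity polynomial in $d$ (for unital trace-preserving factor maps), and then play this off against the word-length cutoff. Your sketch asserts the conclusion of that estimate without the mechanism producing it.
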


This theorem was obtained as corollary of a more general result on free products of $\mathbf{C}^*$-algebras; see \cite[Proposition 4.11]{RicXu}. Afterwards, Reckwerdt \cite{Rec} gave another proof of Theorem \ref{Thm_RicXu}, extending it to graph products of groups.

\begin{thm}[Reckwerdt]\label{Thm_Rec}
Let $G=\ast_{v,\Gamma} G_v$ be the graph product of a family of groups $(G_v)_{v\in\Gamma}$, where $\Gamma$ is a finite graph. If $\boldsymbol\Lambda(G_v)=1$ for every $v\in\Gamma$, then $\boldsymbol\Lambda(G)=1$.
\end{thm}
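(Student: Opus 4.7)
The plan is to proceed by induction on $|\Gamma|$. The case $|\Gamma|=1$ is immediate. For the inductive step, fix a vertex $v_0$ of $\Gamma$, let $L\subseteq\Gamma$ be the set of vertices adjacent to $v_0$, and set $\Gamma'=\Gamma\setminus\{v_0\}$. A standard Bass--Serre-type decomposition of graph products gives
$$G \;\cong\; G_{\Gamma'} \ast_{G_L} (G_L \times G_{v_0}),$$
where $G_L$ sits inside $G_{\Gamma'}$ as a natural subgraph subgroup, and inside $G_L\times G_{v_0}$ as the first factor (the product structure being forced by the fact that $v_0$ is joined to every vertex of $L$). If $v_0$ is connected to every other vertex, then $L=\Gamma'$, the amalgamation is trivial, and $G = G_{\Gamma'}\times G_{v_0}$, which has Cowling--Haagerup constant $1$ by the product formula. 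Otherwise $|L|<|\Gamma'|<|\Gamma|$, so by the inductive hypothesis $\boldsymbol{\Lambda}(G_{\Gamma'})=\boldsymbol{\Lambda}(G_L)=1$, and therefore also $\boldsymbol{\Lambda}(G_L\times G_{v_0})=1$.

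The problem thus reduces to the following amalgamated strengthening of Theorem~\ref{Thm_RicXu}: if $A$ and $B$ are discrete groups with $\boldsymbol{\Lambda}(A)=\boldsymbol{\Lambda}(B)=1$ and $C$ is a common subgroup, then $\boldsymbol{\Lambda}(A\ast_C B)=1$. Note that $\boldsymbol{\Lambda}(C)=1$ comes for free from the subgroup inequality. To prove this lemma I would adapt the Ricard--Xu strategy. Fix $\varepsilon>0$ and finitely supported $\varphi^A\in A(A)$, $\varphi^B\in A(B)$, $\varphi^C\in A(C)$ of $M_0A$-norm at most $1+\varepsilon$, approximating $1$ on prescribed finite sets. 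Using the Bo\.{z}ejko--Fendler characterisation (Theorem~\ref{Thm_Boz-Fen}) write
$$\varphi^A(s^{-1}t)=\langle\xi^A(t),\eta^A(s)\rangle,\qquad \varphi^B(s^{-1}t)=\langle\xi^B(t),\eta^B(s)\rangle,$$
and similarly for $\varphi^C$, into bounded Hilbert-space-valued maps. Exploiting the normal form of $A\ast_C B$ (reduced alternating words in coset representatives of $C$ in $A$ and $B$, terminated by a tail in $C$), assemble these vector-valued data into global maps $\Xi,H:A\ast_C B\to\mathcal{K}$ via an iterated tensor product indexed by the alternating syllables, so that the resulting function $\Phi$ is a finitely supported Herz--Schur multiplier whose restriction to $A$, $B$ and $C$ recovers (up to controlled error) the given inputs.

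The main obstacle is to make $\|\Phi\|_{M_0A(A\ast_C B)}$ actually tend to $1$ rather than to some power of $1+\varepsilon$. A naive tensor construction would blow up by a factor of $1+\varepsilon$ per syllable, which is hopeless. The remedy, following Ricard--Xu, is to orthogonalise each building block against a distinguished ``trivial coset'' direction before tensoring, so that only finitely many factors contribute nontrivially at any given argument and the norms compound into $1+O(\varepsilon)$ uniformly in the syllable length. In the amalgamated setting this orthogonalisation has to be carried out \emph{relative} to the Hilbert subspace arising from the embedding of $C$, which forces a careful common choice of unit reference vectors in the factorisations of $\varphi^A$, $\varphi^B$ and $\varphi^C$ so that the gluing over $C$ is isometric. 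Once this combinatorial--analytic balancing is in place, the pointwise convergence $\Phi\to 1$ on finite subsets is automatic from the convergence of the input multipliers on each factor, and the induction closes.
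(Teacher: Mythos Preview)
The paper itself does not prove this theorem; it is a survey and simply attributes the result to Reckwerdt \cite{Rec}, with a later alternative proof in \cite{Bor}. So there is no ``paper's own proof'' to compare against, but your proposal still deserves scrutiny on its own terms.

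Your inductive decomposition
\[
G \;\cong\; G_{\Gamma'} \ast_{G_L} \bigl(G_L \times G_{v_0}\bigr)
\]
is correct and standard. The trouble is the lemma you reduce to: \emph{if $\boldsymbol\Lambda(A)=\boldsymbol\Lambda(B)=1$ and $C\leq A,B$ is any common subgroup, then $\boldsymbol\Lambda(A\ast_C B)=1$.} This is not known in general, and is in fact strictly stronger than the theorem you are trying to prove. The Ricard--Xu argument depends in an essential way on the amalgam being trivial: the ``orthogonalisation against the trivial coset direction'' works because that direction is one-dimensional and canonical. When $C$ is an arbitrary (possibly nonamenable) group, there is no single reference vector to orthogonalise against, and your phrase ``orthogonalisation relative to the Hilbert subspace arising from the embedding of $C$'' hides the entire difficulty rather than resolving it. Compare with Theorem~\ref{Thm_Boz-Pic2}, where even for \emph{amenable} factors one needs $C$ to be compact and open; and with Question~\ref{Ques_freeprod}, which is already open for the unamalgamated case with constant $>1$.

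Reckwerdt's actual proof does not pass through amalgamated free products at all. It works directly with the normal form for graph products (commuting syllables indexed by cliques of $\Gamma$) and builds the approximating multipliers globally, using the graph combinatorics to control the norm. The later proof of Borst \cite{Bor} likewise works at the level of graph products of $\mathbf{C}^*$-algebras in the spirit of \cite{RicXu}, again without reducing to a general amalgamation lemma. If you want to salvage your inductive scheme, you would at minimum need to exploit the very special feature that $C=G_L$ is a \emph{retract} of $B=G_L\times G_{v_0}$; but even then, carrying this through is essentially a new theorem, not a routine adaptation.
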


More recently, a new proof of Theorem \ref{Thm_Rec} was given in \cite{Bor} by studying graph products of $\mathbf{C}^*$-algebras, along the lines of \cite{RicXu}.

Theorem \ref{Thm_RicXu} says that weak amenability with Cowling--Haagerup constant 1 is stable under free products, but the following question remains open.

\begin{ques}\label{Ques_freeprod}
Let $G,H$ be groups such that
\begin{align*}
1<\max\{\boldsymbol\Lambda(G),\boldsymbol\Lambda(H)\}<\infty.
\end{align*}
Can we conclude that $\boldsymbol\Lambda(G\ast H)<\infty$?
\end{ques}

In the particular case when $G$ is amenable and $H$ is hyperbolic, Question \ref{Ques_freeprod} has a positive answer; see \cite{Ver2}. See also \cite[Corollary 5.2]{GuReTe} for the case when $G$ has polynomial growth.

\section{{\bf Weak amenability and measured group theory}}\label{Sec_mgt}

Even though geometric tools have proved to be very useful to the study of weak amenability, this is not a geometric property. More precisely, one can find pairs of finitely generated groups which are quasi-isometric, but one is weakly amenable and the other one is not; see \cite{Car}. In sharp contrast with this, weak amenability behaves very well with respect to equivalence relations coming from measured group theory; we refer the reader to \cite{Fur} for an introduction to these topics.

Let $\Gamma, \Lambda$ be countable groups. We say that two measure preserving actions $\Gamma\curvearrowright(X,\mu)$, $\Lambda\curvearrowright(Y,\nu)$ on standard non-atomic probability spaces are orbit equivalent if there is a measure space isomorphism $T:(X,\mu)\to(Y,\nu)$ sending $\Gamma$-orbits onto $\Lambda$-orbits. The following was proved in \cite[Lemma 2.6]{CowZim} and \cite[Theorem 3.1]{CowZim}.

\begin{thm}[Cowling--Zimmer]
Let $\Gamma, \Lambda$ be countable groups admitting orbit equivalent actions as above. Then $\boldsymbol\Lambda(\Gamma)=\boldsymbol\Lambda(\Lambda)$.
\end{thm}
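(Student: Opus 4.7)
My plan is to transfer weak amenability from $\Lambda$ to $\Gamma$ via the measurable cocycle supplied by the orbit equivalence. The measure-space isomorphism $T:(X,\mu)\to(Y,\nu)$ determines a cocycle $\alpha:\Gamma\times X\to\Lambda$, defined a.e.\ by $T(gx)=\alpha(g,x)T(x)$ and satisfying the identity $\alpha(gh,x)=\alpha(g,hx)\alpha(h,x)$ (which implies $\alpha(g,g^{-1}x)=\alpha(g^{-1},x)^{-1}$); a symmetric cocycle $\beta:\Lambda\times Y\to\Gamma$ is obtained from $T^{-1}$. By symmetry, it suffices to prove $\boldsymbol\Lambda(\Gamma)\leq\boldsymbol\Lambda(\Lambda)$.

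Fix $C>\boldsymbol\Lambda(\Lambda)$ and a net $(\varphi_i)$ of finitely supported functions on $\Lambda$ with $\|\varphi_i\|_{M_0A(\Lambda)}\leq C$ and $\varphi_i\to 1$ pointwise (using density of $c_c(\Lambda)$ in $A(\Lambda)$). Define the transferred functions
\begin{equation*}
\tilde\varphi_i(g)=\int_X\varphi_i(\alpha(g,x))\,d\mu(x),\qquad g\in\Gamma.
\end{equation*}
The key estimate comes from Theorem~\ref{Thm_Boz-Fen}: writing $\varphi_i(\lambda^{-1}\mu)=\langle\xi_i(\mu),\eta_i(\lambda)\rangle_{\mathcal{H}_i}$ with $\|\xi_i\|_\infty\|\eta_i\|_\infty\leq C$, the cocycle identity gives $\alpha(h^{-1}g,x)=\alpha(h,h^{-1}gx)^{-1}\alpha(g,x)$, and after the measure-preserving substitution $y=gx$ I obtain
\begin{equation*}
\tilde\varphi_i(h^{-1}g)=\int_X\bigl\langle\xi_i\bigl(\alpha(g,g^{-1}y)\bigr),\eta_i\bigl(\alpha(h,h^{-1}y)\bigr)\bigr\rangle\,d\mu(y).
\end{equation*}
Setting $\tilde\xi_i(g)(y):=\xi_i(\alpha(g,g^{-1}y))$ and $\tilde\eta_i(h)(y):=\eta_i(\alpha(h,h^{-1}y))$ as elements of $L^2(X;\mathcal{H}_i)$, the probability normalization $\mu(X)=1$ forces $\sup_g\|\tilde\xi_i(g)\|\leq\|\xi_i\|_\infty$ and analogously for $\tilde\eta_i$, so Theorem~\ref{Thm_Boz-Fen} yields $\|\tilde\varphi_i\|_{M_0A(\Gamma)}\leq C$. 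Pointwise convergence $\tilde\varphi_i\to 1$ on $\Gamma$ then follows by dominated convergence, since $|\varphi_i|\leq\|\varphi_i\|_{M_0A(\Lambda)}\leq C$ uniformly.

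The main obstacle is the final conversion step: the transferred functions $\tilde\varphi_i$ are in general neither finitely supported nor even in $A(\Gamma)$, so the definition of $\boldsymbol\Lambda(\Gamma)$ cannot be invoked directly. I would resolve this by passing through the equivalence $\boldsymbol\Lambda(\Gamma)=\boldsymbol\Lambda_{\mathrm{w^*cb}}(L\Gamma)$ from Theorem~\ref{Thm_wa_CBAP}: the multipliers $M_{\tilde\varphi_i}:L\Gamma\to L\Gamma$ are w*-continuous, $\mathrm{cb}$-bounded by $C$, and converge to $\mathrm{id}_{L\Gamma}$ pointwise in the weak* topology. A truncation argument using an exhausting sequence of finite subsets $F_n\uparrow\Gamma$ together with the canonical trace $\tau$ on $L\Gamma$---producing finite-rank compressions of each $M_{\tilde\varphi_i}$---followed by a diagonal extraction, then yields a net of finite-rank w*-continuous maps with $\mathrm{cb}$-norm $\leq C$ converging pointwise-weak* to $\mathrm{id}_{L\Gamma}$, whence $\boldsymbol\Lambda_{\mathrm{w^*cb}}(L\Gamma)\leq C$. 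Letting $C\searrow\boldsymbol\Lambda(\Lambda)$ gives $\boldsymbol\Lambda(\Gamma)\leq\boldsymbol\Lambda(\Lambda)$, and the reverse inequality follows by running the symmetric construction based on $\beta$.
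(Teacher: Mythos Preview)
The paper does not include a proof of this theorem; it merely cites \cite{CowZim} (and later points to Ozawa's simplification \cite[\S 2]{Oza2} for the more general Theorem~\ref{Thm_Jol}). Your cocycle-transfer construction is exactly the Cowling--Zimmer strategy, and the Hilbert-space computation yielding $\|\tilde\varphi_i\|_{M_0A(\Gamma)}\leq C$ is correct.

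The genuine gap is in your final ``conversion'' step. Your proposed truncation does not preserve the cb-bound: composing $M_{\tilde\varphi_i}$ with the Fourier truncation onto $\operatorname{span}\{\lambda(g):g\in F\}$ amounts to multiplying $\tilde\varphi_i$ by $\mathds{1}_F$, and $\|\mathds{1}_F\|_{M_0A(\Gamma)}$ is in general unbounded in $F$ (already for $\Gamma=\Z$ it grows like $\log|F|$). More fundamentally, a bounded net in $M_0A(\Gamma)$ converging pointwise to $1$ proves nothing on its own---the constant net $\tilde\varphi_i\equiv 1$ has this property for every group---so you really must land the $\tilde\varphi_i$ inside $A(\Gamma)$, and no abstract finite-rank compression will do that for you while keeping the bound $C$.

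The resolution in \cite{CowZim} is not to truncate but to observe that $\tilde\varphi_i$ already lies in $A(\Gamma)$. The point (implicit once you invoke the cocycle, since the whole setup requires essentially free actions) is that freeness of the $\Gamma$-action makes $g\mapsto\alpha(g,x)$ injective for a.e.\ $x$: if $\alpha(g,x)=\alpha(g',x)$ then $T(gx)=T(g'x)$, hence $gx=g'x$, hence $g=g'$. Since each $\varphi_i$ is finitely supported, Jensen's inequality, Tonelli, and this injectivity give
\begin{align*}
\sum_{g\in\Gamma}|\tilde\varphi_i(g)|^2\ \leq\ \int_X\sum_{g\in\Gamma}|\varphi_i(\alpha(g,x))|^2\,d\mu(x)\ \leq\ \|\varphi_i\|_{\ell^2(\Lambda)}^2\ <\ \infty,
\end{align*}
so $\tilde\varphi_i\in\ell^2(\Gamma)\subset A(\Gamma)$ (via $f=f\ast\check\delta_e$). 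The definition of $\boldsymbol\Lambda(\Gamma)$ then applies directly, with no detour through $L\Gamma$.
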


This result was extended by Jolissaint to measure equivalent groups in an unpublished manuscript; see \cite[Theorem 1.10]{Jol3} for a more general result. Two countable groups $\Gamma, \Lambda$ are measure equivalent if there is a measure preserving action of $\Gamma\times\Lambda$ on a standard measure space $(\Omega,\mu)$ such that each of the actions $\Gamma\curvearrowright\Omega$, $\Lambda\curvearrowright\Omega$ admits a finite-measure fundamental domain. In other words, there are measurable sets $X,Y\subset\Omega$ such that $\mu(X)<\infty$, $\mu(Y)<\infty$, and
\begin{align}\label{ME_coupling}
\Omega=\bigsqcup_{s\in\Gamma}sX=\bigsqcup_{t\in\Lambda}tY.
\end{align}
This is a weakening of orbit equivalence in the following sense. If $\Gamma$ and $\Lambda$ admit essentially free ergodic probability measure-preserving actions that are orbit equivalent, then one can find a measure equivalence coupling $\Gamma\times\Lambda\curvearrowright(\Omega,\mu)$ with one common fundamental domain for both individual actions; see the proof of \cite[Theorem 2.5]{Fur}.

\begin{thm}[Jolissaint]\label{Thm_Jol}
Let $\Gamma, \Lambda$ be measure equivalent groups. Then $\boldsymbol\Lambda(\Gamma)=\boldsymbol\Lambda(\Lambda)$.
\end{thm}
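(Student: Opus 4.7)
The plan is to prove $\boldsymbol\Lambda(\Lambda)\leq\boldsymbol\Lambda(\Gamma)$; by symmetry this suffices. The strategy, generalising Cowling--Zimmer's orbit-equivalence argument, is to use the measurable cocycle associated to the measure equivalence coupling to transfer a Herz--Schur approximate identity on $\Gamma$ to one on $\Lambda$, then invoke the Bo\.{z}ejko--Fendler theorem (Theorem \ref{Thm_Boz-Fen}). We may assume $\boldsymbol\Lambda(\Gamma)<\infty$, as otherwise the conclusion is trivial.

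Fix an ME coupling $\Gamma\times\Lambda\curvearrowright(\Omega,\mu)$ with fundamental domains $X$ and $Y$, as in \eqref{ME_coupling}. Since the two actions commute, the $\Lambda$-action descends to a measure-preserving action $(t,x)\mapsto t\cdot x$ on $(X,\mu|_X)$, and one obtains a measurable cocycle $\alpha\colon\Lambda\times X\to\Gamma$ characterised by $tx=\alpha(t,x)\,(t\cdot x)$ in $\Omega$. The cocycle identity $\alpha(t_1t_2,x)=\alpha(t_2,x)\alpha(t_1,t_2\cdot x)$ quickly rewrites as
\begin{align*}
\alpha(\tau,x)^{-1}\alpha(\rho,x)=\alpha(\rho\tau^{-1},\tau\cdot x),\qquad \rho,\tau\in\Lambda,\ x\in X.
\end{align*}

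Given $C>\boldsymbol\Lambda(\Gamma)$, I would pick a net $(\varphi_i)\subset A(\Gamma)$ with $\|\varphi_i\|_{M_0A(\Gamma)}\leq C$ and $\varphi_i\to 1$ pointwise; by Theorem \ref{Thm_Boz-Fen} there exist bounded maps $\xi_i,\eta_i\colon\Gamma\to\mathcal{H}_i$ with $\varphi_i(s^{-1}r)=\langle\xi_i(r),\eta_i(s)\rangle$ and $\sup_g\|\xi_i(g)\|\cdot\sup_g\|\eta_i(g)\|\leq C$. I then define, for $\rho,\tau,s\in\Lambda$,
\begin{align*}
\tilde\xi_i(\rho)(x)=\xi_i(\alpha(\rho,x)),\qquad \tilde\eta_i(\tau)(x)=\eta_i(\alpha(\tau,x)),\qquad \psi_i(s)=\frac{1}{\mu(X)}\int_X\varphi_i(\alpha(s,y))\,d\mu(y).
\end{align*}
The $L^2$-norm of $\tilde\xi_i(\rho)$ in $L^2(X,\mathcal{H}_i;\mu(X)^{-1}d\mu)$ is bounded by $\sup\|\xi_i\|$, and similarly for $\tilde\eta_i$. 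Combining the displayed cocycle relation with the $\Lambda$-invariant substitution $y=\tau\cdot x$ gives
\begin{align*}
\langle\tilde\xi_i(\rho),\tilde\eta_i(\tau)\rangle_{L^2} = \psi_i(\rho\tau^{-1}).
\end{align*}
The right-sided form of Bo\.{z}ejko--Fendler (which has the same $M_0A$-norm as the left-sided one, since inversion is an isometry of $M_0A$) then yields $\|\psi_i\|_{M_0A(\Lambda)}\leq C$, while dominated convergence (using $|\varphi_i|\leq C$ and pointwise convergence $\varphi_i\to 1$) gives $\psi_i(s)\to 1$ for every $s\in\Lambda$.

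The main obstacle is that the approximants must actually lie in $A(\Lambda)$ and not merely in $M_0A(\Lambda)$: integration over $X$ generically spreads the support, so the $\psi_i$ need not be finitely supported even when the $\varphi_i$ are chosen so. I would deal with this by a refined truncation exploiting that $A(\Lambda)$ is an ideal in $M_0A(\Lambda)$ (Section \ref{Ssec_Fourier_mult}), combined with a diagonal extraction keeping the $M_0A$-norm within $C+\varepsilon_i$ with $\varepsilon_i\to 0$; equivalently, one invokes the standard fact that for discrete groups the condition of weak amenability is equivalent to the existence of a net in $M_0A(\Lambda)$ of $M_0A$-norm at most $C$ converging pointwise to $1$, dropping the $A(\Lambda)$-membership. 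Carrying out this last step carefully is the technical heart of the proof in \cite{Jol3}.
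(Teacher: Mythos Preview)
The paper does not itself prove Theorem \ref{Thm_Jol}; it refers to Jolissaint's manuscript and to Ozawa's short argument in \cite[\S 2]{Oza2}. Your overall strategy---transfer a Herz--Schur approximate identity through the ME cocycle and bound the $M_0A$-norm via Theorem \ref{Thm_Boz-Fen}---is precisely Ozawa's, and your cocycle computation leading to $\langle\tilde\xi_i(\rho),\tilde\eta_i(\tau)\rangle=\psi_i(\rho\tau^{-1})$ is correct.

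The genuine gap is in your last paragraph; both proposed fixes fail. The ``standard fact'' is simply false: the constant function $1$ lies in $M_0A(\Lambda)$ with norm $1$, so a pointwise-convergent net in $M_0A(\Lambda)$ of norm at most $C$ exists trivially for \emph{every} group (take the constant net $1$); this cannot characterise weak amenability. The truncation idea is circular: multiplying $\psi_i$ by elements of $A(\Lambda)$ that are close to $1$ with controlled $M_0A$-norm presupposes exactly the approximate identity you are trying to build.

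The correct resolution is that $\psi_i$ already lies in $A(\Lambda)$, with no further work on the $\Lambda$ side. For $\varphi_i$ supported on a finite set $F\subset\Gamma$, one checks that
\begin{align*}
\psi_i(s)=\frac{1}{\mu(X)}\sum_{g\in F}\varphi_i(g)\,\mu(sX\cap gX)=\frac{1}{\mu(X)}\sum_{g\in F}\varphi_i(g)\,\langle\pi(s)1_X,1_{gX}\rangle_{L^2(\Omega)},
\end{align*}
where $\pi$ is the $\Lambda$-representation on $L^2(\Omega)$. Since $\mu(X)<\infty$, the vectors $1_X,1_{gX}$ lie in $L^2(\Omega)$; and since $\Omega=\bigsqcup_{t\in\Lambda}tY$, the representation $\pi$ is unitarily equivalent to $\lambda\otimes\operatorname{id}_{L^2(Y)}$, a multiple of the left regular representation of $\Lambda$. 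Hence each coefficient $s\mapsto\langle\pi(s)1_X,1_{gX}\rangle$ lies in $A(\Lambda)$, and so does $\psi_i$. Note that this step uses only $\mu(X)<\infty$ and the existence of a $\Lambda$-fundamental domain, not $\mu(Y)<\infty$---which is exactly why the paper remarks that the same argument yields $\boldsymbol\Lambda(\Lambda)\leq\boldsymbol\Lambda(\Gamma)$ under the one-sided finiteness hypothesis.
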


A simple proof of Theorem \ref{Thm_Jol} was given by Ozawa in \cite[\S 2]{Oza2}. One can also consider the more general setting in which only one of the fundamental domains has finite measure. More precisely, suppose that \eqref{ME_coupling} holds, but we only know that $X$ has finite measure. Then the same proof shows that $\boldsymbol\Lambda(\Lambda)\leq\boldsymbol\Lambda(\Gamma)$.

More recently, these results were generalised even further. A noncommutative version of measure equivalence, called von Neumann equivalence, was introduced in \cite{IsPeRu}. In this case, measure spaces are replaced by tracial von Neumann algebras, and fundamental domains are replaced by projections; we refer the reader to \cite{IsPeRu} for details. Measure equivalence implies von Neumann equivalence, but it is not known whether the converse holds. The following was proved in \cite[Theorem 1.1]{Ish}.

\begin{thm}[Ishan]
Let $\Gamma, \Lambda$ be von Neumann equivalent groups. Then $\boldsymbol\Lambda(\Gamma)=\boldsymbol\Lambda(\Lambda)$.
\end{thm}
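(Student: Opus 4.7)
The plan is to adapt Ozawa's simple proof of Theorem \ref{Thm_Jol} by transferring Herz--Schur multipliers from $\Gamma$ to $\Lambda$ through the von Neumann coupling; by symmetry it suffices to prove $\boldsymbol\Lambda(\Lambda)\leq\boldsymbol\Lambda(\Gamma)$. I would first unpack the definition from \cite{IsPeRu}: a von Neumann coupling provides a tracial (possibly semifinite) von Neumann algebra $(M,\tau)$ with commuting, trace-preserving actions $\sigma$ of $\Gamma$ and $\rho$ of $\Lambda$, together with finite-trace projections $p,q\in M$ that are fundamental domains in the sense that $\{\sigma_s(p)\}_{s\in\Gamma}$ and $\{\rho_t(q)\}_{t\in\Lambda}$ are each families of pairwise orthogonal projections summing to $1$. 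This is the operator-algebraic analogue of the coupling $\Omega$ together with fundamental domains $X$, $Y$ in \eqref{ME_coupling}.

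Next, given $\varphi\in M_0A(\Gamma)$ with $\|\varphi\|_{M_0A(\Gamma)}\leq C$, I would invoke Theorem \ref{Thm_Boz-Fen} to fix bounded maps $\xi,\eta:\Gamma\to\mathcal{H}$ implementing $\varphi(s^{-1}s')=\langle\xi(s'),\eta(s)\rangle$ with $\sup\|\xi\|\cdot\sup\|\eta\|\leq C$. I would then construct transferred vectors $\tilde\xi,\tilde\eta:\Lambda\to L^2(M,\tau)\otimes\mathcal{H}$ by accumulating the contribution of each $\Gamma$-piece $\sigma_s(p)$ inside the $\Lambda$-cell $\rho_t(q)$, the candidate formula being
\[
\tilde\xi(t)=\tau(q)^{-1/2}\sum_{s\in\Gamma}\sigma_s(p)\rho_t(q)\otimes\xi(s),
\]
and analogously for $\tilde\eta$. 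The pairwise orthogonality of $\{\sigma_s(p)\}_{s\in\Gamma}$ together with trace-invariance collapses the norm to
\[
\|\tilde\xi(t)\|^2=\tau(q)^{-1}\sum_{s\in\Gamma}\tau(\sigma_s(p)\rho_t(q))\|\xi(s)\|^2\leq\sup_{s}\|\xi(s)\|^2,
\]
and similarly for $\tilde\eta$. It then remains to verify that $\langle\tilde\xi(t'),\tilde\eta(t)\rangle$ depends only on $t^{-1}t'$, so that it defines a function $\psi:\Lambda\to\C$; the converse direction of Theorem \ref{Thm_Boz-Fen} will then yield $\|\psi\|_{M_0A(\Lambda)}\leq C$.

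Finally, applying this transfer to a net $(\varphi_i)$ witnessing $\boldsymbol\Lambda(\Gamma)\leq C$ produces $\psi_i$ of norm at most $C$, and the finiteness of $\tau(q)$ combined with uniform convergence $\varphi_i\to 1$ on any finite subset of $\Gamma$, together with a truncation of the defining series to a large enough finite subset of $\Gamma$, yields pointwise convergence $\psi_i(t)\to 1$ for all $t\in\Lambda$, hence $\boldsymbol\Lambda(\Lambda)\leq C$. The main obstacle is Step~2: in Ozawa's argument for measure equivalence the algebra $M=L^\infty(\Omega)$ is commutative, so $\sigma_s(p)$ and $\rho_t(q)$ commute and the transferred kernel reduces transparently to an average of $\varphi$ over a cocycle; in the genuinely noncommutative setting one must either verify that the naive formula above still produces a kernel depending only on $t^{-1}t'$, which will likely require a careful reparametrisation exploiting the $\Gamma$-invariance of $\tau$ and the commutation $\sigma_s\rho_t=\rho_t\sigma_s$, or else replace it by a Morita-type construction based on the $L\Gamma$-$L\Lambda$-bimodule structure of $pMq$ implicit in \cite{IsPeRu}. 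That verification is the technical heart of the argument.
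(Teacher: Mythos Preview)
The paper does not contain a proof of this theorem: it is a survey, and this result is simply attributed to Ishan with a citation to \cite[Theorem 1.1]{Ish}. As the introduction states, the only result proved in the paper is Theorem \ref{Thm_wa_CBAP}; everything else, including this theorem, is stated with a reference. So there is no proof here to compare your proposal against.

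That said, your sketch is a reasonable outline of how one would expect the argument to go, and you have correctly identified the crux. The transfer mechanism you describe is exactly the natural noncommutative analogue of Ozawa's argument from \cite[\S 2]{Oza2}, and the norm estimate via orthogonality of the $\sigma_s(p)$ is fine. But as you yourself flag, the step showing that $\langle\tilde\xi(t'),\tilde\eta(t)\rangle$ depends only on $t^{-1}t'$ is not established: in the commutative case this follows because $\sigma_s(p)\rho_t(q)$ and $\sigma_{s'}(p)\rho_{t'}(q)$ commute and one can rewrite everything via the measure-equivalence cocycle, whereas here the products $\sigma_s(p)\rho_t(q)$ need not commute with one another and your candidate formula does not obviously collapse. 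Without that verification (or the bimodule reformulation you allude to), the proposal is a plan rather than a proof. To close the gap you would need to consult Ishan's actual construction in \cite{Ish}, which handles precisely this issue.
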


Again, one may assume that only one of the fundamental domains has finite trace and obtain an inequality instead.

\section{{\bf Obstructions to weak amenability}}\label{Sec_obstr}

In this section, we focus on groups that are known not to be weakly amenable. The first examples, as we already saw in Section \ref{Sec_Lie}, come from higher rank Lie groups and their lattices. In particular, in \cite{Haa2}, Haagerup showed that
\begin{align}\label{Lambda(SL(2,Z)Z2)}
\boldsymbol\Lambda(\operatorname{SL}(2,\R)\ltimes\R^2)=\boldsymbol\Lambda(\operatorname{SL}(2,\Z)\ltimes\Z^2)=\infty.
\end{align}
This was generalised by Ozawa \cite{Oza2} in the following way; see also \cite[Proposition 2.11]{OzaPop}.

\begin{thm}[Ozawa]\label{Thm_Oza_nonwa}
Let $G$ be a weakly amenable group, and let $N$ be a normal, closed, amenable subgroup of $G$. Then there is a $(G\ltimes N)$-invariant mean on $L^\infty(N)$, where $(G\ltimes N)$ acts on $N$ by $(s,t)\cdot x=stxs^{-1}$.
\end{thm}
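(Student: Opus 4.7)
The strategy is a Day-style fixed-point argument, with weak amenability serving as a substitute for amenability of $G$. Let $\mathcal{M}\subseteq L^\infty(N)^*$ be the set of $N$-translation-invariant means: nonempty (by amenability of $N$), weak$^*$-compact, and convex in $L^\infty(N)^*$. The group $G$ acts affinely and weak$^*$-continuously on $\mathcal{M}$ by push-forward along the conjugation action $\operatorname{Ad}_g$, and a $(G\ltimes N)$-invariant mean on $L^\infty(N)$ is exactly a fixed point of this action. If $G$ were amenable, Day's fixed-point theorem would conclude the argument immediately; weak amenability must be used as a weaker substitute via an approximation procedure.

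By weak amenability, fix a net $(\varphi_i)_{i\in I}$ of finitely supported functions in $A(G)$ with $\|\varphi_i\|_{M_0A(G)}\leq C:=\boldsymbol{\Lambda}(G)$ and $\varphi_i\to 1$ pointwise on $G$. Using Theorem~\ref{Thm_Boz-Fen}, factor
\begin{align*}
\varphi_i(s^{-1}t)=\langle \xi_i(t),\eta_i(s)\rangle_{\mathcal{H}_i},\qquad \sup_{t\in G}\|\xi_i(t)\|\cdot \sup_{s\in G}\|\eta_i(s)\|\leq C.
\end{align*}
By amenability of $N$, fix some $\mu_0\in\mathcal{M}$. For each $i$, the goal is to build a bounded linear functional $\mu_i$ on $L^\infty(N)$ of norm at most $C$ such that (a)~$\mu_i$ is $N$-translation-invariant, and (b)~for every $g\in G$ and $f\in L^\infty(N)$ one has $|\mu_i(f\circ\operatorname{Ad}_g)-\mu_i(f)|\to 0$ along the net.

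Granting such a family, Banach--Alaoglu yields a weak$^*$ cluster point $\mu$ of $(\mu_i)$ in the ball of radius $C$ of $L^\infty(N)^*$. Property~(a) is closed under weak$^*$ limits, so $\mu$ is $N$-translation-invariant, and property~(b) promotes in the limit to exact $G$-conjugation-invariance. A normalisation using $\mu_i(1)\to 1$ (which is forced by $\varphi_i(e)\to 1$) ensures $\mu(1)\neq 0$, so $\mu/\mu(1)$ is the sought $(G\ltimes N)$-invariant mean on $L^\infty(N)$.

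The heart of the proof, and the main obstacle, is the construction of $\mu_i$ in Step~(3). The most direct attempt of a scalar weighted average
\begin{align*}
\mu_i^{\mathrm{naive}}(f)=\sum_{g\in G}\varphi_i(g)\,\mu_0(f\circ\operatorname{Ad}_g)
\end{align*}
is not viable: $\varphi_i$ need not be positive so $\mu_i^{\mathrm{naive}}$ need not be a mean, and $\sum_g|\varphi_i(g)|$ is not controlled by $\|\varphi_i\|_{M_0A(G)}$. The fix is a bilinear construction intrinsic to the Bo\.zejko--Fendler factorisation: one defines $\mu_i(f)$ via an inner-product pairing of an operator-valued expression built from $\xi_i,\eta_i$ against $\mu_0$, so that the uniform bound comes from Cauchy--Schwarz via $\sup\|\xi_i\|\cdot \sup\|\eta_i\|\leq C$, while positivity and $N$-covariance are inherited from $\mu_0$ together with the left-translation-covariance of the Herz--Schur data. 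The delicate technical point is making this construction simultaneously compatible with the $G$-conjugation action, so that the failure of invariance along $g\in G$ is exactly controlled by the vanishing of $\varphi_i\circ\operatorname{Ad}_g-\varphi_i$ on $N$ that comes from $\varphi_i\to 1$ pointwise.
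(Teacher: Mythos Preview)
First, note that the paper does not actually prove this theorem: it is stated with references to \cite{Oza2} and \cite[Proposition~2.11]{OzaPop}, in keeping with the paper's policy that only Theorem~\ref{Thm_wa_CBAP} is proved in full. So there is no in-paper proof to compare against; the relevant benchmark is Ozawa's original argument.

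Your proposal is a plan, not a proof. You correctly set up the Day-type framework and correctly identify that the whole difficulty lies in constructing the approximate means $\mu_i$ with a uniform bound coming from $\|\varphi_i\|_{M_0A(G)}$ rather than from $\|\varphi_i\|_{\ell^1}$. But you do not actually carry out that construction: the phrase ``an inner-product pairing of an operator-valued expression built from $\xi_i,\eta_i$ against $\mu_0$'' is not a definition, and nothing you wrote pins down what $\mu_i(f)$ is for a general $f\in L^\infty(N)$. Since this is, as you say, the heart of the matter, the proposal as it stands has a genuine gap.

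There is also a misconception in your last sentence. You suggest that the approximate $G$-conjugation invariance will be ``exactly controlled by the vanishing of $\varphi_i\circ\operatorname{Ad}_g-\varphi_i$ on $N$ that comes from $\varphi_i\to 1$ pointwise.'' Pointwise convergence alone cannot do this job: it gives $\varphi_i(gng^{-1})-\varphi_i(n)\to 0$ for each fixed $g,n$, but to push approximate invariance through a mean on $L^\infty(N)$ you need control in a norm (e.g.\ in $M_0A(N)$ or an equivalent $\ell^\infty$-type bound on Hilbert-space data), and that does \emph{not} follow from pointwise convergence plus a bound on $\|\varphi_i\|_{M_0A(G)}$. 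In Ozawa's argument the approximate invariance is obtained structurally from the factorisation itself: one uses that $\varphi_i(n^{-1}n')=\langle\xi_i(gn'),\eta_i(gn)\rangle$ for every $g\in G$, so the Herz--Schur data restricted to each coset $gN$ represent the \emph{same} multiplier on $N$, and amenability of $N$ is then used to compare these coset-restrictions. The resulting estimates are what make the limit mean $G$-conjugation-invariant; the pointwise convergence $\varphi_i\to 1$ is used only at the very end to ensure $\mu_i(1)\to 1$. If you want to complete your approach, this is the mechanism you need to implement precisely.
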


Applying this result to $G=\operatorname{SL}(2,\Z)\ltimes\Z^2$ and $N=\Z^2$, one recovers \eqref{Lambda(SL(2,Z)Z2)}. Moreover, it can also be applied to wreath products. Let $\Gamma$ and $\Lambda$ be discrete groups, and let $\bigoplus_\Gamma\Lambda$ denote the direct sum of copies of $\Lambda$ indexed by $\Gamma$. The wreath product $\Lambda\wr\Gamma$ is defined as
\begin{align*}
\Lambda\wr\Gamma=\Gamma\ltimes\bigoplus_\Gamma\Lambda,
\end{align*}
where $\Gamma$ acts on $\bigoplus_\Gamma\Lambda$ by shifts.

\begin{cor}[Ozawa]\label{Cor_Oza}
Let $\Gamma$ and $\Lambda$ be discrete groups such that $\Gamma$ is not amenable and $\Lambda$ is not the trivial group. Then $\Lambda\wr\Gamma$ is not weakly amenable.
\end{cor}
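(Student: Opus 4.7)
My plan is to assume $\Lambda \wr \Gamma$ is weakly amenable and derive from that a $\Gamma$-invariant mean on $\ell^\infty(\Gamma)$, contradicting the hypothesis that $\Gamma$ is not amenable. Since weak amenability passes to closed subgroups, I first replace $\Lambda$ by a nontrivial cyclic (hence amenable) subgroup $\Lambda_0 = \langle \lambda_0 \rangle \leq \Lambda$, and reduce to proving the statement for $\Lambda_0 \wr \Gamma$. Writing $G := \Lambda_0 \wr \Gamma = \Gamma \ltimes N$ with $N := \bigoplus_\Gamma \Lambda_0$, the subgroup $N$ is normal in $G$ and amenable (as a direct limit of finite direct powers of $\Lambda_0$), so Theorem \ref{Thm_Oza_nonwa} produces a mean $m$ on $\ell^\infty(N)$ invariant under the action $(s,t)\cdot x = stxs^{-1}$ of $G \ltimes N$.

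I next extract the two useful invariances of $m$. Taking $s = e_G$ in $G \ltimes N$ gives $(e,t)\cdot x = tx$, so $m$ is a left-invariant mean on $\ell^\infty(N)$. Taking $t = e_N$ and $s = (\gamma, e_N) \in \Gamma \subset G$, a short semidirect-product computation yields $sxs^{-1} = \gamma \cdot x$, the shift action of $\Gamma$ on $N$; hence $m$ is also shift-invariant. Invariance under $N$-translation implies that all singletons of $N$ have the same $m$-mass, and finite additivity over $K$ distinct points in the infinite group $N$ gives $K\cdot m(\{0\}) \leq 1$ for every $K$, so $m(\{0\}) = 0$ and $m(N\setminus\{0\}) = 1$.

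To conclude, I would define the averaging map $\phi : \ell^\infty(\Gamma) \to \ell^\infty(N)$ by
\begin{align*}
\phi(f)(x) = \frac{1}{|\mathrm{supp}(x)|}\sum_{\gamma\in\mathrm{supp}(x)} f(\gamma) \quad (x \neq 0), \qquad \phi(f)(0) := 0.
\end{align*}
Since the shift action satisfies $\mathrm{supp}(\gamma\cdot x) = \gamma\cdot\mathrm{supp}(x)$, the map $\phi$ is positive and $\Gamma$-equivariant (with $\Gamma$ acting on $\ell^\infty(\Gamma)$ by left translation), and $\phi(1) = \chi_{N\setminus\{0\}}$ has $m$-mass $1$ by the atomlessness just established. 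Therefore $\nu(f) := m(\phi(f))$ is a positive, unital, $\Gamma$-invariant linear functional on $\ell^\infty(\Gamma)$, i.e.\ a $\Gamma$-invariant mean, contradicting the non-amenability of $\Gamma$. The main conceptual step is recognising the averaging map $\phi$: each $x \in N\setminus\{0\}$ canonically points to the nonempty finite set $\mathrm{supp}(x) \subset \Gamma$, and averaging over it is the device that transports the invariance of $m$ from $N$ back to $\Gamma$ itself; the rest (the reduction to cyclic $\Lambda_0$, the semidirect-product identity $sxs^{-1} = \gamma\cdot x$, and the singleton computation) is routine.
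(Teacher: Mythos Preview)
Your argument is correct. The paper does not give an explicit proof of this corollary; it simply states it immediately after Theorem~\ref{Thm_Oza_nonwa} with the remark that the theorem ``can also be applied to wreath products,'' leaving the derivation to the reader. Your proof is precisely the natural way to carry this out: reduce to a cyclic (hence amenable) $\Lambda_0\leq\Lambda$ so that $N=\bigoplus_\Gamma\Lambda_0$ is amenable and the theorem applies, then transfer the resulting $(G\ltimes N)$-invariant mean on $\ell^\infty(N)$ back to a $\Gamma$-invariant mean on $\ell^\infty(\Gamma)$ via the support-averaging map. The two invariances you extract from $m$ (left-$N$-translation from $(e,t)$ and $\Gamma$-shift from $(\gamma,e)$) are exactly what is needed, and the atomlessness argument giving $m(N\setminus\{0\})=1$ is standard. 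So your proof is a faithful and complete expansion of what the paper only hints at.
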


As we will see in Section \ref{Sec_AP_HK}, there is an even weaker property than weak amenability, with important applications to operator algebras: the AP of Haagerup and Kraus. All the examples that we shall discuss now fail this more general property, but since we are primarily interested in weak amenability here, we will not mention this any further. All the references, however, deal with the AP and its operator algebraic counterpart.

We begin by discussing exact groups; we refer the reader to Chapter 5 of \cite{BroOza} for a detailed presentation of these topics. We will say that a discrete group $\Gamma$ is exact if its reduced $\mathbf{C}^*$-algebra $C_\lambda^*(\Gamma)$ is exact, meaning that, for every $\mathbf{C}^*$-algebra $\mathcal{A}$ and every ideal $\mathcal{I}\subset\mathcal{A}$, the sequence
\begin{align*}
\{0\}\longrightarrow C_\lambda^*(\Gamma)\otimes_{\mathrm{min}}\mathcal{I}\longrightarrow C_\lambda^*(\Gamma)\otimes_{\mathrm{min}}\mathcal{A}
\longrightarrow C_\lambda^*(\Gamma)\otimes_{\mathrm{min}}\left(\mathcal{A}/\mathcal{I}\right)\longrightarrow \{0\}
\end{align*}
is exact. Here $\otimes_{\mathrm{min}}$ stands for the minimal tensor product; see \cite[\S 4.1]{Pis} for details. This property turns out to be equivalent to Yu's Property A, which is probably better known; see \cite[Theorem 5.5.7]{BroOza}. The following theorem was essentially proved by Kraus \cite{Kra} and Haagerup--Kraus \cite{HaaKra}; we refer the reader to \cite[\S 12.4]{BroOza} for a detailed presentation.

\begin{thm}[Kraus, Haagerup--Kraus]\label{Thm_wa->ex}
Let $\Gamma$ be a discrete group. If $\Gamma$ is weakly amenable, then it is exact.
\end{thm}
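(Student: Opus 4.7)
The plan is to apply Theorem \ref{Thm_wa_CBAP} to translate weak amenability into a $\mathbf{C}^*$-algebraic approximation statement on $\mathcal{A} := C_\lambda^*(\Gamma)$, and then deduce exactness via a slice-map argument. Theorem \ref{Thm_wa_CBAP} hands me a net $(T_i)$ of finite rank linear maps on $\mathcal{A}$ with $\|T_i\|_{\mathrm{cb}} \leq C$ and $\|T_i x - x\| \to 0$ for every $x \in \mathcal{A}$. What I want to show is that, for any short exact sequence $0 \to \mathcal{I} \to \mathcal{B} \xrightarrow{q} \mathcal{B}/\mathcal{I} \to 0$ of $\mathbf{C}^*$-algebras, the induced map $\mathrm{id}_\mathcal{A} \otimes q : \mathcal{A} \otimes_{\mathrm{min}} \mathcal{B} \to \mathcal{A} \otimes_{\mathrm{min}} (\mathcal{B}/\mathcal{I})$ has kernel precisely $\mathcal{A} \otimes_{\mathrm{min}} \mathcal{I}$; injectivity of the minimal tensor norm already gives the inclusion $\mathcal{A} \otimes_{\mathrm{min}} \mathcal{I} \subseteq \ker(\mathrm{id} \otimes q)$, and only the reverse is at stake.

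The first step is to upgrade the point-norm approximation from $\mathcal{A}$ to $\mathcal{A} \otimes_{\mathrm{min}} \mathcal{B}$. Since $\|T_i \otimes \mathrm{id}_\mathcal{B}\|_{\mathrm{cb}} = \|T_i\|_{\mathrm{cb}} \leq C$, the extensions $T_i \otimes \mathrm{id}_\mathcal{B}$ form a uniformly bounded family on $\mathcal{A} \otimes_{\mathrm{min}} \mathcal{B}$. Combining this with pointwise convergence on elementary tensors, a standard $\varepsilon/3$ argument on the norm-dense algebraic tensor product yields
\[
\|(T_i \otimes \mathrm{id}_\mathcal{B})(z) - z\| \longrightarrow 0, \qquad \forall z \in \mathcal{A} \otimes_{\mathrm{min}} \mathcal{B}.
\]

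The second step exploits the finite rank of each $T_i$. I write $T_i = \sum_{k=1}^{n_i} \phi_k^i(\cdot)\, y_k^i$ with $\phi_k^i \in \mathcal{A}^*$ and $y_k^i \in \mathcal{A}$. Each left slice map $L_{\phi_k^i} := \phi_k^i \otimes \mathrm{id}_\mathcal{B}$ is a well-defined bounded map $\mathcal{A} \otimes_{\mathrm{min}} \mathcal{B} \to \mathcal{B}$, since bounded linear functionals on a $\mathbf{C}^*$-algebra are automatically completely bounded with equal norm. A direct inspection on elementary tensors shows $q \circ L_{\phi_k^i} = L_{\phi_k^i} \circ (\mathrm{id} \otimes q)$, so any $z \in \ker(\mathrm{id} \otimes q)$ satisfies $L_{\phi_k^i}(z) \in \ker q = \mathcal{I}$ for every $k$. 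Consequently
\[
(T_i \otimes \mathrm{id}_\mathcal{B})(z) \;=\; \sum_{k=1}^{n_i} y_k^i \otimes L_{\phi_k^i}(z) \;\in\; \mathcal{A} \otimes \mathcal{I} \;\subseteq\; \mathcal{A} \otimes_{\mathrm{min}} \mathcal{I}.
\]
Because $\mathcal{A} \otimes_{\mathrm{min}} \mathcal{I}$ is closed in $\mathcal{A} \otimes_{\mathrm{min}} \mathcal{B}$ (injectivity of $\otimes_{\mathrm{min}}$), the convergence from the first step yields $z \in \mathcal{A} \otimes_{\mathrm{min}} \mathcal{I}$. This gives exactness of $\mathcal{A}$, and hence of $\Gamma$.

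I expect the main obstacle to be the first step: the fact that the minimal tensor norm is controlled by cb-norms rather than ordinary norms is precisely why CBAP, and not merely BAP, is the right hypothesis. An approximation by finite rank maps with only uniformly bounded ordinary norms would not tensor-extend to a uniformly bounded family on $\mathcal{A} \otimes_{\mathrm{min}} \mathcal{B}$, and the $\varepsilon/3$ argument would collapse. This is exactly why Haagerup's earlier MAP result for $C_\lambda^*(\F_2)$ did not directly yield exactness, and why the CBAP strengthening from \cite{deCHaa} is decisive. Once the tensorial convergence is secured, the slice-map chase in the second step is essentially formal.
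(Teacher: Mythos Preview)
Your argument is correct. The paper itself does not prove this theorem; it simply cites Kraus, Haagerup--Kraus, and refers to \cite[\S 12.4]{BroOza} for a detailed treatment. What you have written is precisely the standard slice-map proof that the CBAP implies exactness of the $\mathbf{C}^*$-algebra, and hence (via Theorem \ref{Thm_wa_CBAP} and the paper's definition of exactness for groups) the result. Both steps are sound: the uniform cb-bound on the $T_i$ is exactly what allows $T_i\otimes\mathrm{id}_{\mathcal{B}}$ to extend boundedly on the minimal tensor product, and the finite-rank decomposition together with complete boundedness of functionals gives the slice maps you need to pull $\ker(\mathrm{id}\otimes q)$ into $\mathcal{A}\otimes_{\mathrm{min}}\mathcal{I}$. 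Your closing remark that BAP alone would not suffice for the tensor-extension step is also on target.
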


This result was extended in \cite{Suz} to locally compact groups. We point out that exactness is a very weak property, and examples of non-exact groups are not easy to construct, but they do exist. The study of such groups was initiated in \cite{Grom}, and since then it has become a very active research area.

Another family of examples comes from algebraic groups. The following was proved in \cite[Theorem C]{LafdlS}.

\begin{thm}[Lafforgue--de la Salle]\label{Thm_Laf_dlS}
Let $F$ be a nonarchimedian local field and $n\geq 3$. Then $\operatorname{SL}(n,F)$ is not weakly amenable.
\end{thm}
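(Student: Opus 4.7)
The plan is to reduce to $n=3$, since $\operatorname{SL}(3,F)$ embeds as a closed subgroup of $\operatorname{SL}(n,F)$ for every $n\geq 3$ via the block-diagonal inclusion, and the subgroup stability proposition for weak amenability gives $\boldsymbol\Lambda(\operatorname{SL}(3,F))\leq \boldsymbol\Lambda(\operatorname{SL}(n,F))$. So it suffices to prove that $G=\operatorname{SL}(3,F)$ is not weakly amenable. Let $\mathcal{O}\subset F$ denote the ring of integers and $K=\operatorname{SL}(3,\mathcal{O})$ the standard maximal compact open subgroup of $G$.

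Suppose, for contradiction, that $\boldsymbol\Lambda(G)<\infty$, with a witnessing net $(\varphi_i)$ in $A(G)$ satisfying $\|\varphi_i\|_{M_0A(G)}\leq C$ and $\varphi_i\to 1$ uniformly on compact subsets. Averaging over $K$ from both sides,
$$\check\varphi_i(s)=\int_K\int_K\varphi_i(k_1sk_2)\,dk_1\,dk_2,$$
produces a new net of $K$-bi-invariant functions in $A(G)$, still bounded by $C$ in $M_0A(G)$ (since the space is translation-invariant and the bound is preserved under convex combinations), and still converging uniformly on compact subsets to the constant $1$. Thus weak amenability of $G$ forces weak amenability realised by \emph{$K$-bi-invariant} multipliers, which, by the Cartan decomposition $G=KA^+K$, are parameterised by functions on the semigroup $\Lambda^+$ of dominant cocharacters of the diagonal torus.

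The core of the proof, and its main obstacle, is to establish a rigidity theorem specific to higher rank: every $K$-bi-invariant function $\varphi:G\to\C$ with $\|\varphi\|_{M_0A(G)}\leq C$ admits a limit along each ray $n\mapsto n\mu$ in $\Lambda^+$, and there exist two distinct directions $\mu,\mu'\in\Lambda^+$ for which these two limits must differ by a positive quantity depending only on $C$ and the combinatorics of $\mu,\mu'$. Granting such a dichotomy, the net $\check\varphi_i\to 1$ violates it for $i$ large, since the constant function $1$ has the same limit along every ray, yielding the contradiction.

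The hard part is establishing this rigidity. The plan is to use the Bo\.{z}ejko--Fendler factorization (Theorem \ref{Thm_Boz-Fen}) to write $\varphi(k_1sk_2)=\langle\xi(sk_2),\eta(k_1^{-1})\rangle$ with uniformly bounded Hilbert-space-valued maps $\xi,\eta$, then exploit $K$-bi-invariance to decompose $\xi,\eta$ into isotypic components for the unitary action of $K$ by left/right translation, and finally invoke a strong-property-(T)-type estimate of Lafforgue to show that along two appropriately chosen divergent sequences in $A^+$ the matrix coefficients concentrate on incompatible $K$-isotypic pieces, forcing the limits to differ. It is precisely this higher-rank spectral gap phenomenon that distinguishes $\operatorname{SL}(n,F)$ for $n\geq 3$ from $\operatorname{SL}(2,F)$, where, as Theorem \ref{Thm_Szwarc} shows, the action on the Bruhat--Tits tree yields $\boldsymbol\Lambda(\operatorname{SL}(2,F))=1$.
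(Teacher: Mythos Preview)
The paper does not prove this theorem; it merely records the result and cites Lafforgue--de la Salle. Your reduction to $n=3$ via the block-diagonal embedding and closed-subgroup stability is correct, and the $K$-bi-averaging step is standard. The genuine problem is your formulation of the ``rigidity theorem'': you claim that for every $K$-bi-invariant $\varphi$ with $\|\varphi\|_{M_0A(G)}\leq C$, the limits along two distinct rays in $\Lambda^+$ must \emph{differ} by a positive quantity depending only on $C$. This is false as stated --- the constant function $1$ is itself a $K$-bi-invariant multiplier of norm $1$ with identical limits along every ray --- and you even note this in the next sentence, so the argument is internally inconsistent.

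The actual Lafforgue--de la Salle rigidity goes in the opposite direction: every $K$-bi-invariant $\varphi\in M_0A(G)$ with $\|\varphi\|_{M_0A(G)}\leq C$ has a \emph{single} limit $\ell(\varphi)$ at infinity in $G$, with a uniform rate $|\varphi(g)-\ell(\varphi)|\leq C\,\varepsilon(g)$ for some fixed function $\varepsilon$ on $K\backslash G/K$ vanishing at infinity. The contradiction then runs as follows: since $\check\varphi_i\in A(G)\subset C_0(G)$ one has $\ell(\check\varphi_i)=0$, hence $|\check\varphi_i(g)|\leq C\varepsilon(g)$ for all $i$; choosing $g$ with $C\varepsilon(g)<\tfrac12$ bounds $\check\varphi_i(g)$ away from $1$ uniformly in $i$, contradicting $\check\varphi_i\to 1$ on compacts. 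The toolkit you invoke (isotypic decomposition, Lafforgue-type spectral gap) is broadly the right one, but it is used to force the directional limits to \emph{agree}, not to separate them.
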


In particular, this result applies to $\operatorname{SL}(n,\mathbb{Q}_p)$. Moreover, by Proposition \ref{Prop_CH_lattice}, the same holds for lattices in $\operatorname{SL}(n,F)$. This result was later extended to all $\tilde{A}_2$-lattices in \cite[Corollary E]{LedlSWi}; we refer the reader to \cite[Definition 9.1]{LedlSWi} and \cite[\S 1]{LedlSWi} for precise definitions.

\begin{thm}[L\'ecureux--de la Salle--Witzel]\label{Thm_LdlSW}
Let $\Gamma$ be an $\tilde{A}_2$-lattice. Then $\Gamma$ is not weakly amenable.
\end{thm}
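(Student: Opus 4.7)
The approach I would take is to adapt the Lafforgue--de la Salle strategy behind Theorem \ref{Thm_Laf_dlS} directly to the setting of $\tilde{A}_2$-buildings, bypassing the embedding into a concrete linear group. Since $\Gamma$ is an $\tilde{A}_2$-lattice, it acts properly and cocompactly by type-preserving automorphisms on a locally finite $\tilde{A}_2$-building $X$. The first move is to transfer the approximation problem from $\Gamma$ to $X$: fix a base chamber $x_0\in X$ with finite stabiliser $K\leq\Gamma$, and suppose for contradiction that $\Gamma$ is weakly amenable. By Theorem \ref{Thm_wa_CBAP} (or directly from the definition of $\boldsymbol\Lambda(\Gamma)$) and Theorem \ref{Thm_Boz-Fen}, there is a net of finitely supported Herz--Schur multipliers $(\varphi_i)$ on $\Gamma$ with $\|\varphi_i\|_{M_0A(\Gamma)}\leq C$ and $\varphi_i\to 1$ pointwise. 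These yield Schur multipliers $\Phi_i$ on the orbit $\Gamma\cdot x_0$ with the same norm bound, which in turn extend to Schur multipliers on the full chamber set of $X$ (compactness of $K$ means the extension costs only a bounded factor).

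The second step is a symmetrisation. Averaging $\Phi_i$ over the natural action of the compact stabiliser $K$ produces bi-$K$-invariant Schur multipliers $\psi_i$, still with $\|\psi_i\|_{V(X)}\leq C'$, that now depend on a pair of chambers only through their Weyl-distance $\delta(x,y)$ in the affine Weyl group $\tilde{W}(\tilde{A}_2)$. The convergence $\psi_i\to 1$ is preserved on every finite set of Weyl-distance values. The question becomes: can a sequence of such "radial" Schur multipliers on an $\tilde{A}_2$-building have uniformly bounded $V(X)$-norm while pointwise tending to $1$?

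The heart of the proof, and the main obstacle, is to show that the answer is \emph{no}. Following \cite{LafdlS}, one expands any bi-$K$-invariant Schur multiplier along the spherical / Plancherel decomposition associated to the building, and extracts a rigidity estimate of the following shape: if $\chi_n$ denotes the characteristic function of pairs of chambers at Weyl-distance in a prescribed "sphere" of combinatorial radius $n$, then $\|\chi_n\|_{V(X)}$ is bounded below by a quantity that grows polynomially in $n$. This lower bound then forces the Fourier coefficients of any bi-$K$-invariant $\psi$ with $\|\psi\|_{V(X)}\leq C'$ to decay along the radial parameter at a rate incompatible with $\psi_i\to 1$. The essential ingredient is a spectral gap for certain combinatorial Markov operators on the links of panels of the building, obtained from the representation theory of the finite quotients of panel stabilisers acting on residues; this is the step where the two-dimensionality of $\tilde{A}_2$-apartments plays a decisive role, as the analogous estimate for $\tilde{A}_1$ (trees) fails, in agreement with Theorem \ref{Thm_Szwarc}.

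Combining the transfer step, the radial averaging, and the rigidity estimate forces $C$ to be infinite, contradicting weak amenability and giving $\boldsymbol\Lambda(\Gamma)=\infty$. The hard part is unambiguously the rigidity estimate of the third paragraph: everything else is soft. In \cite{LedlSWi} it is established by a careful purely combinatorial analysis of panels, sectors, and residue structure of $\tilde{A}_2$-buildings, which allows one to produce the required spectral gap without ever appealing to the algebraic group $\operatorname{SL}(3)$; this is precisely what makes the result apply to all $\tilde{A}_2$-lattices, including the many examples that are not linear.
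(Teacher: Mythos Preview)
The paper does not prove this statement. This is a survey, and as the introduction makes explicit, the only result proved in the text is Theorem~\ref{Thm_wa_CBAP}; Theorem~\ref{Thm_LdlSW} is simply stated with a pointer to \cite[Corollary~E]{LedlSWi}. There is therefore no proof in the paper to compare your proposal against.

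For what it is worth, your outline is a reasonable high-level description of the strategy actually pursued in \cite{LedlSWi}: pass from $\Gamma$ to Schur multipliers on the $\tilde{A}_2$-building, reduce to multipliers depending only on the Weyl-chamber-valued distance, and establish a rigidity estimate forbidding a uniformly bounded approximation of $1$. You are also right that the essential novelty over \cite{LafdlS} is replacing the representation theory of $\operatorname{SL}(3)$ by a purely combinatorial analysis of the building, which is what covers the exotic (non-linear) $\tilde{A}_2$-lattices. One small slip: you call $K$ finite in one sentence and compact in the next; in this cocompact discrete setting it is finite, and the averaging is a finite sum. Beyond that, your sketch is at the right level of detail for a survey reference, but of course the actual rigidity estimate in \cite{LedlSWi} is a long and delicate argument that your third paragraph only gestures at.
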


\section{{\bf The Haagerup property and Cowling's conjecture}}\label{Sec_Haag_prop}

This section is devoted to an open problem regarding weak amenability that is often referred to as Cowling's conjecture. In order to state it precisely, we review the definition of the Haagerup property and how it generalises amenability.

\subsection{The Haagerup property}

Recall that a locally compact group $G$ is amenable if and only if its Fourier algebra has a bounded approximate identity; see Theorem \ref{Thm_Lep}. Furthermore, the approximate identity can be chosen to be positive definite and compactly supported; see \cite[Proposition 8.8]{Pie}. We record this fact in the following proposition.

\begin{prop}
A locally compact group $G$ is amenable if and only if there is a net of compactly supported, continuous, positive definite functions $(\varphi_i)$ on $G$ such that $\varphi_i$ converges to $1$ uniformly on compact subsets of $G$.
\end{prop}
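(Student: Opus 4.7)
My plan is to reduce both implications to classical $L^2$-characterisations of amenability (Følner's condition and Reiter's condition in $L^2$), using as a bridge the fact that every continuous, positive definite, compactly supported function on $G$ arises as a diagonal coefficient of the left regular representation $\lambda$.

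For the forward implication, suppose $G$ is amenable. By Følner's condition, for every compact $K \subseteq G$ and every $\varepsilon > 0$, one can find a relatively compact Borel set $F \subseteq G$ of positive finite Haar measure such that $\mu(sF \triangle F)/\mu(F) < \varepsilon^2$ for all $s \in K$. Setting $f = \mathbf{1}_F / \sqrt{\mu(F)} \in L^2(G)$, I would define
\begin{align*}
\varphi_{K,\varepsilon}(s) = \langle \lambda(s)f, f \rangle = \frac{\mu(sF \cap F)}{\mu(F)}.
\end{align*}
This function is continuous as a coefficient of the strongly continuous representation $\lambda$, positive definite as a diagonal coefficient of a unitary representation, and supported in the compact set $\overline{F}\cdot\overline{F}^{-1}$. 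Moreover $\varphi_{K,\varepsilon}(e) = 1$ and a direct expansion gives $1 - \varphi_{K,\varepsilon}(s) = \tfrac{1}{2}\|\lambda(s)f - f\|_2^2 = \tfrac{1}{2}\mu(sF \triangle F)/\mu(F) < \varepsilon^2/2$ for every $s \in K$. Indexing by pairs $(K, \varepsilon)$ then yields the desired net.

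For the backward implication, assume $(\varphi_i)$ is a net as in the statement. Since each $\varphi_i$ is continuous with compact support, it automatically lies in $L^2(G)$. By the classical factorisation theorem of Godement (see, e.g., \cite{KanLau}), any continuous positive definite function in $L^2(G)$ can be written as $\varphi_i = g_i \ast \tilde{g}_i$ with $g_i \in L^2(G)$, $\|g_i\|_2^2 = \varphi_i(e)$, where $\tilde{g}_i(t) = \overline{g_i(t^{-1})}\,\Delta(t^{-1})$. Normalising $h_i = g_i/\|g_i\|_2$, a direct computation gives
\begin{align*}
\|\lambda(s) h_i - h_i\|_2^2 = 2 - 2\,\operatorname{Re}\!\left(\frac{\varphi_i(s)}{\varphi_i(e)}\right).
\end{align*}
The hypothesis $\varphi_i \to 1$ uniformly on compact subsets (in particular on $\{e\}$) forces $\varphi_i(e) \to 1$ and $\operatorname{Re}\varphi_i(s) \to 1$ uniformly on compacts, so $\sup_{s \in K}\|\lambda(s) h_i - h_i\|_2 \to 0$ for every compact $K \subseteq G$. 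This is precisely Reiter's $L^2$-condition, which is a classical equivalent formulation of amenability.

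The main obstacle will be the invocation of Godement's factorisation in the backward direction: the delicate step is to identify an arbitrary continuous positive definite function in $L^2(G)$ as a coefficient of the regular representation $\lambda$, rather than of some abstract cyclic representation arising from the GNS construction. Once that structural fact is in hand, the remainder of the argument is a matter of standard equivalences between amenability, Følner's condition, and Reiter's $L^2$-condition, all of which can be quoted from \cite{Pie}.
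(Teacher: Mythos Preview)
The paper does not actually prove this proposition; it merely records the statement and directs the reader to \cite[Proposition 8.8]{Pie}. Your argument is therefore not being compared against a proof in the paper but against an external reference.

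That said, your proof is correct and follows the standard route. The forward direction via F{\o}lner sets producing functions of the form $s\mapsto\mu(sF\cap F)/\mu(F)$ is exactly the construction alluded to in the remark following Theorem~\ref{Thm_Lep} of the paper (stated there for discrete groups, but the locally compact version works the same way once one knows F{\o}lner's condition holds in general). The backward direction via Godement's theorem and Reiter's $L^2$-condition is also the classical approach; the point you flag as delicate---that a continuous positive definite function in $L^2(G)$ is a diagonal coefficient of $\lambda$ rather than of an abstract GNS representation---is precisely Godement's theorem, and citing it is entirely appropriate here. One minor remark: your convention $\tilde g(t)=\overline{g(t^{-1})}\Delta(t^{-1})$ differs from the involution used elsewhere in the paper, but since all you actually use is the identity $\varphi_i(s)=\langle\lambda(s)g_i,g_i\rangle$, the specific form of the convolution factorisation is irrelevant and the computation $\|\lambda(s)h_i-h_i\|_2^2=2-2\operatorname{Re}(\varphi_i(s)/\varphi_i(e))$ goes through regardless.
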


As we already saw, one way of generalising this is by replacing the hypothesis of being positive definite by the condition
\begin{align*}
\sup_{i}\|\varphi_i\|_{M_0A(G)}<\infty,
\end{align*}
in which case we recover the definition of weak amenability. Another possibility is to relax the hypothesis of compact support.

We say that a locally compact group $G$ has the Haagerup property if there is a net of positive definite functions $(\varphi_i)$ in $C_0(G)$ such that $\varphi_i$ converges to $1$ uniformly on compact subsets of $G$. This property received that name because of Haagerup's seminal work \cite{Haa}, in which he showed that free groups satisfy it. We refer the reader to \cite{CCJJV} for a thorough account on the Haagerup property, its different characterisations and consequences.

\subsection{First version of Cowling's conjecture}
Both weak amenability and the Haagerup property generalise amenability, but they do so in quite different directions. Nonetheless, the examples of groups known to satisfy both properties, and the connections between completely contractive maps and completely positive maps suggest that there might be a link between them. More concretely, the following conjecture was proposed in \cite[\S 1.3.1]{CCJJV}.

\begin{conj}\label{Conj_Cow1}
Let $G$ be a locally compact group. Then $G$ has the Haagerup property if and only if $G$ is weakly amenable with $\boldsymbol\Lambda(G)=1$.
\end{conj}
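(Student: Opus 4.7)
The plan is to address each implication separately, aware that both directions of this conjecture remain open and that a full proof will likely require genuinely new ideas.

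For the forward direction, suppose $G$ is weakly amenable with $\boldsymbol\Lambda(G)=1$, so there is a net $(a_i)$ in $A(G)$ with $\|a_i\|_{M_0A(G)} \to 1$ and $a_i \to 1$ uniformly on compacta. Since $A(G)\subseteq C_0(G)$, the decay at infinity is free; what is missing is positive definiteness. My plan is to use Theorem \ref{Thm_Boz-Fen} to write $a_i(s^{-1}t) = \langle \xi_i(t), \eta_i(s)\rangle$ with $(\sup\|\xi_i\|)(\sup\|\eta_i\|)$ converging to $1$, and then to argue that, as this product saturates its lower bound, the two vector fields $\xi_i$ and $\eta_i$ must become essentially equal (after a unitary rotation), so that in the limit one recovers a representation of the form $\langle \zeta_i(t), \zeta_i(s)\rangle$, which is positive definite. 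A small perturbation argument would then upgrade this to genuine positive definite approximants living in $C_0(G)$.

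For the reverse direction, let $(\varphi_i)$ be a Haagerup net: positive definite elements of $C_0(G)$ converging to $1$ uniformly on compacta. Each $\varphi_i$ already lies in $B(G)\cap C_0(G) \subseteq M_0A(G)$ with $\|\varphi_i\|_{M_0A(G)} \leq \varphi_i(e)$, so the multiplier bound comes for free. The real issue is forcing the approximants into $A(G)$. A first attempt is to multiply by a cutoff $\psi_K\in A(G)$ that equals $1$ on a large compact $K$, producing $\varphi_i\psi_K\in A(G)$; but $\|\psi_K\|_{M_0A(G)}$ generally blows up as $K$ exhausts $G$ unless $G$ is amenable, in which case Theorem \ref{Thm_Lep} already delivers the conclusion. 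A more promising strategy, at least for discrete $G$, is to combine the equivalent definition of the Haagerup property via a proper conditionally negative definite function $\psi:G\to[0,\infty)$ with Schoenberg's theorem: the functions $\varphi_t = e^{-t\psi}$ are then positive definite and in $C_0(G)$, and for $t$ large enough one could hope to force them into $\ell^2(\Gamma)\subseteq A(\Gamma)$, provided that $\psi$ grows sufficiently fast.

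The main obstacle, and the reason the conjecture remains open, is the mismatch between the two kinds of data controlling the approximants: positivity of a quadratic form on one side, and membership in $A(G)$ with prescribed $M_0A(G)$-norm on the other. Overcoming this seemingly requires either a general procedure to produce $A(G)$-cutoffs with uniformly bounded $M_0A(G)$-norm, which is blocked by non-amenability, or a ``positive square root'' operation on near-norm-one Herz--Schur multipliers, which is blocked by the non-uniqueness of the Bo\.{z}ejko--Fendler factorization in Theorem \ref{Thm_Boz-Fen}. I would therefore expect incremental progress to come not from a direct attack but from isolating structural classes of groups, such as those acting geometrically on trees or $\operatorname{CAT}(0)$ cube complexes, in which the positive definite and the completely bounded approximants can be forced to essentially coincide.
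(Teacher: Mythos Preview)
This statement is a \emph{conjecture}, not a theorem, and the paper contains no proof of it. More importantly, your premise that ``both directions of this conjecture remain open'' is incorrect: the paper explains (immediately after stating the conjecture) that the implication from the Haagerup property to $\boldsymbol\Lambda(G)=1$ has been \emph{disproved}. By the theorem of Cornulier--Stalder--Valette, wreath products such as $\Z\wr\F_2$ have the Haagerup property, while Ozawa's Corollary~\ref{Cor_Oza} shows they are not weakly amenable. Hence your entire ``reverse direction'' paragraph is aimed at proving a false statement; the obstacles you identified (cutoffs blowing up in $M_0A(G)$-norm, needing fast growth of $\psi$) are not technical difficulties to be overcome but symptoms of genuine failure.

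For the surviving direction ($\boldsymbol\Lambda(G)=1\implies$ Haagerup, restated in the paper as Conjecture~\ref{Conj_Cow2}), your strategy of trying to force $\xi_i\approx\eta_i$ when the Bo\.{z}ejko--Fendler bound saturates is in the right spirit, but there is no known way to carry it out, precisely because of the non-uniqueness you mention. The paper records Knudby's partial result (Theorem~\ref{Thm_Knudby}): from $\boldsymbol\Lambda(\Gamma)=1$ one can extract a proper function of the form $\|R(x)-R(y)\|^2+\|S(x)+S(y)\|^2$, which is conditionally negative definite \emph{plus a bounded perturbation}. The obstruction is exactly that the ``$S$'' term need not vanish, so one does not get a genuine cnd function and hence cannot invoke Theorem~\ref{Thm_char_Haagerup}. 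Your proposal does not suggest any mechanism for eliminating this residual term, and none is currently known.
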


This conjecture remained open for some time, until one of the directions was disproved. More precisely, the following was shown in \cite[Theorem 1.1]{CoStVa}.

\begin{thm}[Cornulier--Stalder--Valette]
Let $\Gamma$ and $\Lambda$ be countable, discrete groups. If both groups have the Haagerup property, then so does $\Lambda\wr\Gamma$.
\end{thm}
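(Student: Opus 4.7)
The plan is to use the equivalent characterization of the Haagerup property in terms of proper affine isometric actions on real Hilbert spaces (the $a$-$T$-menability of Gromov; see \cite{CCJJV}): a countable group $G$ has the Haagerup property if and only if it admits an orthogonal representation $\pi$ on a real Hilbert space $\mathcal{H}$ together with a $1$-cocycle $b:G\to\mathcal{H}$, that is, $b(gh)=b(g)+\pi(g)b(h)$, such that $\|b(g_n)\|\to\infty$ whenever $g_n\to\infty$ in $G$. Starting from such data on $\Gamma$ and on $\Lambda$ separately, I would construct a proper cocycle for $\Lambda\wr\Gamma=\Gamma\ltimes\bigoplus_\Gamma\Lambda$.

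First, pick proper cocycles $b_\Gamma:\Gamma\to\mathcal{H}_\Gamma$ and $b_\Lambda:\Lambda\to\mathcal{H}_\Lambda$ for orthogonal representations $\pi_\Gamma$ and $\pi_\Lambda$. Before assembling anything, I would replace $b_\Lambda$ by its direct sum with the cocycle $b_{\mathrm{reg}}(\lambda)=\delta_\lambda-\delta_e\in\ell^2(\Lambda;\R)$ associated to the real left regular representation. Since $\|b_{\mathrm{reg}}(\lambda)\|^2=2$ for every $\lambda\neq e$, the modified cocycle remains proper and now satisfies $c:=\inf_{\lambda\neq e}\|b_\Lambda(\lambda)\|^2>0$. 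This preliminary step is crucial for controlling configurations of large support below.

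The core construction is Bernoulli-like. Set $\mathcal{K}=\bigoplus_{t\in\Gamma}\mathcal{H}_\Lambda$; let $\bigoplus_\Gamma\Lambda$ act diagonally through $\Pi_0(\alpha)=\bigoplus_t\pi_\Lambda(\alpha(t))$, and let $\Gamma$ act by the shift $U(g)(v_t)_t=(v_{g^{-1}t})_t$. The identity $U(g)\Pi_0(\alpha)U(g)^{-1}=\Pi_0(g\cdot\alpha)$ shows that $\Pi(\alpha,g):=\Pi_0(\alpha)U(g)$ is an orthogonal representation of $\Lambda\wr\Gamma$. The map $B_0(\alpha)=(b_\Lambda(\alpha(t)))_t$ is well defined on finitely supported $\alpha$, and a direct computation using $B_0(g\cdot\alpha)=U(g)B_0(\alpha)$ verifies that it is a $1$-cocycle for $\Pi$. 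To also control the $\Gamma$-direction, I pull back $b_\Gamma$ along the projection $\Lambda\wr\Gamma\to\Gamma$ and form the total cocycle
\begin{align*}
B(\alpha,g)=B_0(\alpha)\oplus b_\Gamma(g)\in\mathcal{K}\oplus\mathcal{H}_\Gamma,
\end{align*}
for the representation $\Pi\oplus(\pi_\Gamma\circ\mathrm{pr}_\Gamma)$ of $\Lambda\wr\Gamma$.

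It remains to verify properness. Suppose $(\alpha_n,g_n)\to\infty$ in $\Lambda\wr\Gamma$. If $g_n\to\infty$ in $\Gamma$, then $\|b_\Gamma(g_n)\|\to\infty$. Otherwise $g_n$ lies in a finite subset along some subsequence, so $\alpha_n$ must escape every finite subset of $\bigoplus_\Gamma\Lambda$. Either some coordinate $\alpha_n(t)$ escapes every finite subset of $\Lambda$, in which case $\|b_\Lambda(\alpha_n(t))\|\to\infty$ contributes to $\|B_0(\alpha_n)\|$; or the values of $\alpha_n$ remain in a fixed finite subset of $\Lambda$ while $|\mathrm{supp}(\alpha_n)|\to\infty$, in which case $\|B_0(\alpha_n)\|^2\geq c\,|\mathrm{supp}(\alpha_n)|\to\infty$ thanks to the lower bound from the preliminary step. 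In every case $\|B(\alpha_n,g_n)\|\to\infty$, showing that $\Lambda\wr\Gamma$ is $a$-$T$-menable, hence Haagerup. The main obstacle is precisely that preliminary step: without enforcing $\inf_{\lambda\neq e}\|b_\Lambda(\lambda)\|>0$, a large-support configuration valued in the (finite, but possibly non-trivial) subgroup $\ker b_\Lambda$ would escape to infinity while $B_0$ stays bounded. Once this is taken care of, the rest is a routine direct-sum manipulation of cocycles.
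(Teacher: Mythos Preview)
There is a genuine gap in your properness argument. Your dichotomy in the case where $g_n$ stays bounded is incomplete: it is possible for the values of $\alpha_n$ to remain in a fixed finite subset of $\Lambda$ \emph{and} for $|\mathrm{supp}(\alpha_n)|$ to stay bounded, while the support itself wanders off to infinity inside $\Gamma$. Concretely, take $\Gamma$ infinite, fix a non-trivial $\lambda_0\in\Lambda$, and let $\alpha_n$ be the configuration equal to $\lambda_0$ at a single position $t_n\in\Gamma$ (with the $t_n$ pairwise distinct) and trivial elsewhere. Then $(\alpha_n,e)\to\infty$ in $\Lambda\wr\Gamma$, yet
\begin{align*}
\|B(\alpha_n,e)\|^2=\|B_0(\alpha_n)\|^2+\|b_\Gamma(e)\|^2=\|b_\Lambda(\lambda_0)\|^2
\end{align*}
is constant. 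Your cocycle $B$ simply does not detect \emph{where} in $\Gamma$ the lamps are lit, only how many and how brightly; it is not proper.

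This is not a technical oversight but exactly the obstruction that makes the theorem non-trivial --- the naive direct-sum construction was known not to suffice long before \cite{CoStVa}. The present paper is a survey and does not reproduce a proof, referring instead to \cite{CoStVa}. Cornulier, Stalder and Valette work with spaces with measured walls rather than with Hilbert-space cocycles directly; their wreath-product construction of wall structures entangles the wall structure on $\Gamma$ with the location in $\Gamma$ of the support of $\alpha$, producing (after passing back to conditionally negative definite functions) cross-terms between the $\Gamma$-geometry and $\mathrm{supp}(\alpha)$ that your orthogonal direct sum $B_0\oplus b_\Gamma$ cannot supply.
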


This theorem, together with Corollary \ref{Cor_Oza}, gives many examples of groups with the Haagerup property that are not weakly amenable; for example $\Z\wr\F_2$. Furthermore, this conjecture can fail even more dramatically. The following was proved in \cite[Theorem 2]{Osa}.

\begin{thm}[Osajda]\label{Thm_Osa}
There exist finitely generated groups acting properly on $\operatorname{CAT}(0)$ cube complexes that are not exact.
\end{thm}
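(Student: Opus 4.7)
The plan is to use graphical small cancellation theory à la Gromov to embed expanders into the Cayley graph of a finitely generated group, while simultaneously arranging for the group to act properly on an (infinite-dimensional) $\operatorname{CAT}(0)$ cube complex. First, I would fix a sequence $(\Theta_n)$ of finite graphs of uniformly bounded degree, with girth tending to infinity, forming a sequence of expanders; such sequences exist (e.g.\ Ramanujan graphs). Expanders cannot be coarsely embedded into Hilbert space, and any finitely generated group whose Cayley graph coarsely contains such a sequence fails Yu's Property A, hence is not exact.

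Second, I would label the edges of $\bigsqcup_n \Theta_n$ over a finite alphabet $S$, producing a labelled graph $\Theta$, and impose a graphical small cancellation condition, typically $\operatorname{Gr}'(1/6)$, on this labelling. Ensuring such labellings exist while preserving the expander property is a combinatorial input that must be verified separately using a probabilistic or iterative argument. Setting $G = \langle S \mid R\rangle$, where $R$ consists of the words read along the closed walks in $\Theta$, the classical graphical small cancellation machinery guarantees that each $\Theta_n$ embeds isometrically into $\operatorname{Cay}(G,S)$. This yields non-exactness of $G$ directly, and in view of Theorem \ref{Thm_wa->ex} also forbids weak amenability.

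Third, and this is the heart of the construction, I would need to exhibit a proper action of $G$ on a $\operatorname{CAT}(0)$ cube complex. By Sageev's correspondence, it suffices to equip $G$ with a proper space-with-walls structure. The idea is to design the labelled relator graphs $\Theta_n$ so that each carries a suitable system of walls (for instance, inherited from a wallspace structure on the small cancellation presentation complex), and then propagate these walls through the universal cover of the presentation $2$-complex. The small cancellation hypothesis is exactly what makes the wall-crossing combinatorics tractable: pieces are short, so walls embedded in distinct relators interact in controlled ways, and one can show that the wall pseudo-metric on $G$ is proper.

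The main obstacle is this third step. Without extra care, Gromov-style small cancellation groups constructed from expanders can fail the Haagerup property, indeed the Gromov monsters are paradigmatic examples of such bad behaviour. Ensuring simultaneously the isometric embedding of expanders (needed for non-exactness) and the existence of a wall structure whose Sageev cubulation is proper requires a delicate combinatorial setup of walls on the relators, compatible with both the small cancellation angle and the expander geometry. By the remark following Theorem \ref{Thm_GueHig}, the resulting cube complex must be infinite-dimensional, which is consistent with the fact that $G$ is not weakly amenable.
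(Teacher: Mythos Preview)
The paper does not prove this theorem; it is a survey and simply quotes the result from Osajda's paper \cite{Osa}. So there is no ``paper's own proof'' to compare against, only Osajda's original argument.

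Your outline does follow Osajda's strategy: graphical small cancellation over an expander sequence to force non-exactness, combined with a wall structure on the presentation complex that cubulates via Sageev. You are also right that the third step is the crux and that the resulting cube complex must be infinite-dimensional.

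That said, there is a structural inaccuracy in how you order the construction. You propose to start from arbitrary expanders (e.g.\ Ramanujan graphs), impose a $\operatorname{Gr}'(1/6)$ labelling, and \emph{then} look for walls. In Osajda's argument the logic runs the other way: the finite relator graphs must themselves carry wall structures with controlled separation properties \emph{before} one builds the group, and the small cancellation labelling must be compatible with these walls so that they glue coherently in the Cayley complex. Arbitrary expanders need not admit such structures, and adding walls post hoc to a generic $\operatorname{Gr}'(1/6)$ presentation over expanders is exactly what fails for Gromov monsters, as you yourself note. The actual input is a carefully chosen sequence of finite graphs that are simultaneously expanders \emph{and} spaces with walls whose wall pseudometric is proper at the right scale; producing such graphs, and showing the labelling can be made to respect both constraints, is the substantive content of \cite{Osa}. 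Your sketch acknowledges this as ``the main obstacle'' but does not indicate how it is overcome, so as written it is a correct plan rather than a proof.
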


Observe that, by Theorem \ref{Thm_GueHig} and Theorem \ref{Thm_wa->ex}, such cube complexes are necessarily infinite-dimensional. However, a group acting properly on such a complex will have the Haagerup property because the distance on it is a conditionally negative definite kernel; see \cite[Technical Lemma]{NibRee}.

\subsection{Second version of Cowling's conjecture}
One of the directions of Conjecture \ref{Conj_Cow1} is false, but the other one remains open. Hence we can restate it as follows.

\begin{conj}\label{Conj_Cow2}
Let $G$ be a locally compact group. If $G$ is weakly amenable with $\boldsymbol\Lambda(G)=1$, then it has the Haagerup property.
\end{conj}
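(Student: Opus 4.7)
The plan is to begin from the defining net $(a_i) \subset A(G)$ with $\|a_i\|_{M_0A(G)} \to 1$ and $a_i \to 1$ uniformly on compact subsets, and to try to extract from it a net of positive definite functions $\varphi_i \in C_0(G)$ with the same convergence. Since $A(G) \subseteq C_0(G)$, the $C_0$ decay of the $a_i$ comes for free; the real task is to promote ``completely bounded of norm tending to $1$'' to ``positive definite''. The natural first move is to invoke Theorem \ref{Thm_Boz-Fen} to write
\begin{align*}
a_i(s^{-1}t) = \langle \xi_i(t), \eta_i(s)\rangle, \qquad \Bigl(\sup_{t\in G} \|\xi_i(t)\|\Bigr)\Bigl(\sup_{s\in G} \|\eta_i(s)\|\Bigr) \leq \|a_i\|_{M_0A(G)},
\end{align*}
and then to hope that, as the product of suprema approaches $1$ and $a_i(e)\to 1$, a Cauchy--Schwarz-type rigidity forces $\eta_i$ and $\xi_i$ to become essentially equal, up to a unitary twist, on arbitrarily large compact sets. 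If this rigidity could be quantified, then the coefficient $\psi_i(s^{-1}t) = \langle \xi_i(t), \xi_i(s)\rangle$, positive definite by construction, would differ from $a_i$ by an error tending to $0$ uniformly on compacta.

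The second step would be to ensure that the symmetrised functions $\psi_i$ lie in $C_0(G)$. This is not automatic: the $\psi_i$ are built only from the Bo\.zejko--Fendler representation, and without a direct identification with an element of $A(G)$, they a priori only sit in $B(G) \subseteq C_b(G)$. One would try to truncate by multiplying $\psi_i$ by a compactly supported $b \in A(G)$ with $b$ close to $1$ on a prescribed compact set (using the local units in $A(G)$ exploited already in the proof of Corollary \ref{Cor_Lep}), thereby producing functions in $A(G) \subseteq C_0(G)$. A subsequent $\frac{1}{4}(c+c^*)^*(c+c^*)$-style construction inside $M_0A(G)$ would then be needed to restore positive definiteness while preserving both the $C_0$ decay and the approximation of $1$ on compacta.

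The main obstacle, and the reason Conjecture \ref{Conj_Cow2} remains open, is precisely that none of the three requirements (positive definiteness, $C_0$ decay, convergence to $1$) is compatible with the others in any automatic way. The Bo\.zejko--Fendler spaces $\mathcal{H}_i$ vary with $i$ and offer no ambient compactness from which to extract a coherent limit, so the putative rigidity underlying the first step is the crux of the difficulty. A substantially different angle, in the spirit of Ozawa's operator-algebraic techniques behind Theorem \ref{Thm_Oza_nonwa} and of the simple proof of Theorem \ref{Thm_Jol} given in \cite{Oza2}, would be to work directly on $L\Gamma$ and the w*-CCAP maps $T_i$ supplied by Theorem \ref{Thm_wa_CBAP}: one would try to average $T_i$ against a suitable adjoint-like symmetry, then invoke a Stinespring-type dilation to extract completely positive, weak*-continuous approximations of the identity whose associated Herz--Schur coefficients are the desired positive definite functions. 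The hard part, in both strategies, is ruling out the loss of $C_0$ decay under any such symmetrisation, because positivity tends to be inherited only after passing through ambient Hilbert spaces or dilations that do not see the Fourier algebra's asymptotic behaviour at infinity.
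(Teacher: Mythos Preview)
This statement is Conjecture~\ref{Conj_Cow2}, and the paper does \emph{not} prove it: as the text explicitly says, ``no general proof of Conjecture~\ref{Conj_Cow2} has been found''. So there is no ``paper's proof'' against which to compare your proposal. You yourself acknowledge this in your write-up (``the reason Conjecture~\ref{Conj_Cow2} remains open''), so what you have submitted is not a proof but a heuristic discussion of possible strategies and their obstructions. That is a perfectly legitimate thing to write, but it should not be framed as a proof proposal.

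On the content of your discussion: your two suggested lines of attack --- symmetrising the Bo\.zejko--Fendler maps $\xi_i,\eta_i$ to force positive definiteness, and averaging the w*-CCAP approximants $T_i$ on $L\Gamma$ to extract completely positive ones --- are natural first thoughts, and you correctly identify the core difficulty: any symmetrisation or dilation that restores positivity tends to destroy the $C_0$ decay, because it passes through ambient Hilbert spaces that are blind to behaviour at infinity. What your discussion misses is that the state of the art on this conjecture, recorded in the paper as Theorem~\ref{Thm_Knudby}, already carries out a version of your first strategy and shows exactly how far it goes. Knudby proves that $\boldsymbol\Lambda(\Gamma)=1$ yields a \emph{proper} function $\psi$ of the form
\[
\psi(y^{-1}x)=\|R(x)-R(y)\|^2+\|S(x)+S(y)\|^2,
\]
i.e.\ a conditionally negative definite kernel plus a bounded symmetric perturbation. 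This is precisely the output of a Bo\.zejko--Fendler-type symmetrisation: one gets positivity up to a bounded error, and properness survives, but the error term $\|S(x)+S(y)\|^2$ is exactly what obstructs concluding the Haagerup property via Theorem~\ref{Thm_char_Haagerup}(ii). So the honest summary is that your first strategy has been pushed to its natural limit and falls short by this bounded correction; the open problem is whether that correction can be removed.
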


All the examples of groups with $\boldsymbol\Lambda(G)=1$ presented in these notes are known to satisfy the Haagerup property, but no general proof of Conjecture \ref{Conj_Cow2} has been found. We now discuss some partial results or efforts towards proving this conjecture. Before that, we recall some characterisations of the Haagerup property; see \cite[\S 1.1.1]{CCJJV}.

\begin{thm}\label{Thm_char_Haagerup}
Let $G$ be a second countable, locally compact group. The following are equivalent:
\begin{itemize}
\item[(i)] The group $G$ has the Haagerup property.
\item[(ii)] There exists a continuous, conditionally negative definite function $\psi:G\to[0,\infty)$ such that
\begin{align*}
\lim_{x\to\infty}\psi(x)=\infty.
\end{align*}
\item[(iii)] There is a continuous, proper, isometric, affine action of $G$ on a Hilbert space.
\end{itemize}
\end{thm}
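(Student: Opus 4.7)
The plan is to establish the cycle $(\mathrm{iii})\Rightarrow(\mathrm{ii})\Rightarrow(\mathrm{i})\Rightarrow(\mathrm{ii})\Rightarrow(\mathrm{iii})$, relying on three classical tools: Schoenberg's theorem, relating a conditionally negative definite function $\psi$ to the semigroup of positive definite functions $e^{-t\psi}$; the Delorme--Guichardet-type GNS construction, which realises continuous conditionally negative definite functions as squared norms of $1$-cocycles; and a diagonal argument exploiting the fact that a second countable locally compact group is $\sigma$-compact.

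For $(\mathrm{ii})\Leftrightarrow(\mathrm{iii})$, I would invoke the Delorme--Guichardet correspondence. Given a continuous conditionally negative definite function $\psi:G\to[0,\infty)$ with $\psi(e)=0$, one constructs a Hilbert space $\mathcal{H}$, a continuous unitary representation $\pi:G\to\mathbf{U}(\mathcal{H})$, and a continuous $1$-cocycle $b:G\to\mathcal{H}$ (satisfying $b(st)=b(s)+\pi(s)b(t)$) with $\|b(s)\|^2=\psi(s)$. Then $\alpha(s)v:=\pi(s)v+b(s)$ is a continuous affine isometric action, and its properness is literally the condition $\|b(s)\|\to\infty$ as $s\to\infty$, i.e.\ $\psi(s)\to\infty$. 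Conversely, a continuous proper affine isometric action $\alpha$ gives back such a $\psi$ via $\psi(s):=\|\alpha(s)\cdot 0\|^2$.

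The easy direction $(\mathrm{ii})\Rightarrow(\mathrm{i})$ is an immediate application of Schoenberg's theorem: $\varphi_t:=e^{-t\psi}$ is positive definite, lies in $C_0(G)$ because $\psi\ge 0$ is proper, and tends to $1$ uniformly on compacta as $t\to 0^+$ by continuity of $\psi$. The main step is $(\mathrm{i})\Rightarrow(\mathrm{ii})$. Using $\sigma$-compactness, I would fix an exhaustion $K_1\subset K_2\subset\cdots$ of $G$ by compact subsets, and from the net supplied by (i) extract a sequence $(\varphi_n)$ of continuous positive definite functions in $C_0(G)$ with $\varphi_n(e)=1$ and $|1-\varphi_n|\le 2^{-n}$ on $K_n$. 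The function $\psi_n:=1-\operatorname{Re}\varphi_n$ is then continuous, nonnegative, and conditionally negative definite; moreover, since $\varphi_n\in C_0(G)$, we have $\psi_n(x)\to 1$ as $x\to\infty$. The candidate
\[
\psi:=\sum_{n\ge 1}\psi_n
\]
converges uniformly on each $K_m$ (since $\psi_n\le 2^{-n}$ there for $n\ge m$), so it is continuous, nonnegative, and conditionally negative definite as a sum of such. For properness, given $M>0$, pick $N>M+1$; each of $\psi_1,\ldots,\psi_N$ tends to $1$ at infinity, so $\sum_{n=1}^N\psi_n\ge N-\tfrac12$ outside a sufficiently large compact set, whence $\psi(x)>M$ there. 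The main obstacle is precisely this diagonal step, where one must simultaneously exploit the two opposite asymptotics of $\psi_n$ (small on compacta, close to $1$ at infinity); it is exactly this tension that makes $\sigma$-compactness together with the $C_0$ hypothesis indispensable.
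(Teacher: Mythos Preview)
Your argument is correct and follows the standard route: Schoenberg for $(\mathrm{ii})\Rightarrow(\mathrm{i})$, the GNS/Delorme--Guichardet cocycle construction for $(\mathrm{ii})\Leftrightarrow(\mathrm{iii})$, and the diagonal summation $\psi=\sum_n(1-\operatorname{Re}\varphi_n)$ for $(\mathrm{i})\Rightarrow(\mathrm{ii})$. Note, however, that the paper does not supply its own proof of this theorem; it is stated as a known characterisation with a reference to \cite[\S 1.1.1]{CCJJV}, so there is no in-text argument to compare against, and your sketch is precisely the classical proof one finds in that reference (and in \cite[\S 2.10--2.11]{BedlHVa}).
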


Recall that a kernel $\tilde{\psi}:X\times X\to[0,\infty)$ on a set $X$ is conditionally negative definite (cnd) if there is a Hilbert space $\mathcal{H}$ and a map $\xi:X\to\mathcal{H}$ such that
\begin{align*}
\tilde{\psi}(x,y)=\|\xi(x)-\xi(y)\|^2,\quad\forall x,y\in X.
\end{align*}
If $G$ is a group, we say that $\psi:G\to[0,\infty)$ is a cnd function if the kernel $\tilde{\psi}:G\times G\to[0,\infty)$ given by
\begin{align*}
\tilde{\psi}(x,y)=\psi(y^{-1}x),\quad\forall x,y\in G,
\end{align*}
is cnd; see \cite[\S II.C]{BedlHVa}. The following was proved in \cite[Theorem 1.2]{Knu2} and \cite[Proposition 4.3]{Knu2}.

\begin{thm}[Knudby]\label{Thm_Knudby}
Let $\Gamma$ be a countable, discrete group with $\boldsymbol\Lambda(\Gamma)=1$. Then there exist a function $\psi:\Gamma\to[0,\infty)$, a Hilbert space $\mathcal{H}$, and maps $R,S:\Gamma\to\mathcal{H}$ such that
\begin{align*}
\psi(y^{-1}x)=\|R(x)-R(y)\|^2+\|S(x)+S(y)\|^2,\quad\forall x,y\in \Gamma,
\end{align*}
and $\displaystyle\lim_{x\to\infty}\psi(x)=\infty$.
\end{thm}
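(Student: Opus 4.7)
The plan is to read off the decomposition of $\psi$ directly from a Bo\.{z}ejko--Fendler factorization of a suitably normalized sequence of finitely supported Herz--Schur multipliers witnessing $\boldsymbol\Lambda(\Gamma)=1$.

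Fix an exhaustion $\{e\}=F_0\subset F_1\subset F_2\subset\cdots$ of $\Gamma$ by finite sets. For each $n\geq 1$, first choose a finitely supported $\varphi_n:\Gamma\to\mathbb{C}$ with $\|\varphi_n\|_{M_0A(\Gamma)}\leq 1+\delta_n$ and $|\varphi_n(s)-1|\leq\delta_n$ on $F_n$, where $\delta_n\to 0$ can be taken arbitrarily fast. Replacing $\varphi_n$ by the symmetrization $\tfrac14(\varphi_n(s)+\overline{\varphi_n(s)}+\varphi_n(s^{-1})+\overline{\varphi_n(s^{-1})})$ and then dividing by $\varphi_n(e)$, I may assume $\varphi_n$ is real-valued, symmetric ($\varphi_n(s^{-1})=\varphi_n(s)$) and normalized so that $\varphi_n(e)=1$, at the cost of slightly enlarging $\delta_n$. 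Theorem \ref{Thm_Boz-Fen} then produces a Hilbert space $\mathcal{H}_n$ and maps $\xi_n,\eta_n:\Gamma\to\mathcal{H}_n$ with $\varphi_n(y^{-1}x)=\langle\xi_n(x),\eta_n(y)\rangle$ and $\sup\|\xi_n\|\sup\|\eta_n\|\leq 1+\delta_n$. A short padding argument---enlarging $\mathcal{H}_n$ and appending coordinates of the form $\sqrt{c_n^2-\|\xi_n(x)\|^2}\,\delta_x$ to $\xi_n$, and an analogous coordinate to $\eta_n$ on a separate summand---then allows me to assume $\|\xi_n(x)\|=\|\eta_n(x)\|=c_n$ for every $x\in\Gamma$, where $c_n^2=1+\delta_n$, without altering any inner product.

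The central algebraic step is as follows. Define $R_n(x)=\tfrac12(\xi_n(x)+\eta_n(x))$ and $S_n(x)=\tfrac12(\xi_n(x)-\eta_n(x))$ in $\mathcal{H}_n$. The parallelogram identity, together with the constancy $\|\xi_n(\cdot)\|=\|\eta_n(\cdot)\|=c_n$ and the reality and symmetry of $\varphi_n$, yields
\[
\|R_n(x)-R_n(y)\|^2+\|S_n(x)+S_n(y)\|^2=\tfrac12\bigl(\|\xi_n(x)-\eta_n(y)\|^2+\|\eta_n(x)-\xi_n(y)\|^2\bigr)=2c_n^2-2\varphi_n(y^{-1}x)=:\psi_n(y^{-1}x).
\]
Setting $y=x$ and using $\varphi_n(e)=1$ shows that $\|S_n(x)\|^2=\tfrac12(c_n^2-1)=\tfrac12\delta_n$, independently of $x$.

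To finish I would set $\mathcal{H}=\bigoplus_n\mathcal{H}_n$, $S(x)=\bigoplus_n S_n(x)$, $R(x)=\bigoplus_n(R_n(x)-R_n(e))$, and $\psi=\sum_n\psi_n$. Arranging $\sum_n\delta_n<\infty$ and $\psi_n|_{F_n}\leq 2^{-n}$ (both achievable by rapidly decreasing $\delta_n$) gives $\|S(x)\|^2\leq\sum_n\delta_n/2<\infty$ and $\|R(x)\|^2\leq\sum_n\psi_n(x)=\psi(x)<\infty$, so that $R$ and $S$ indeed map into $\mathcal{H}$; orthogonality of the direct-sum summands then produces the required identity $\psi(y^{-1}x)=\|R(x)-R(y)\|^2+\|S(x)+S(y)\|^2$. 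Properness follows from the crude bound $\psi_n(s)\geq 2c_n^2\geq 2$ whenever $s\notin\operatorname{supp}(\varphi_n)$: since each $\operatorname{supp}(\varphi_n)$ is finite, any $s$ leaving every finite set eventually lies outside $\operatorname{supp}(\varphi_1)\cup\cdots\cup\operatorname{supp}(\varphi_N)$ for arbitrarily large $N$, giving $\psi(s)\geq 2N$. The step I expect to be the hardest is the padding: without $\|\xi_n(x)\|$ and $\|\eta_n(x)\|$ being independent of $x$, the right-hand side of the central identity acquires an $x,y$-dependent defect $\|\xi_n(x)\|^2+\|\eta_n(y)\|^2$ which does not reduce to a function of $y^{-1}x$, and the whole scheme collapses.
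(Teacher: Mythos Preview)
The paper does not prove this theorem; it is stated with attribution to Knudby and a reference to \cite[Theorem 1.2]{Knu2} and \cite[Proposition 4.3]{Knu2}, so there is no in-paper proof to compare against. Your argument is essentially the one Knudby gives: build $\psi$ as a convergent sum $\sum_n(2c_n^2-2\varphi_n)$ from a sequence of finitely supported, real, symmetric, normalised Herz--Schur multipliers with norm tending to $1$, and read off the $R,S$ decomposition from the Bo\.{z}ejko--Fendler factorisation via the parallelogram identity. The central computation is correct, and the summability and properness arguments are sound.

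One small point deserves attention in the padding step. From Theorem~\ref{Thm_Boz-Fen} you obtain only $\bigl(\sup_x\|\xi_n(x)\|\bigr)\bigl(\sup_y\|\eta_n(y)\|\bigr)\leq 1+\delta_n$, which does not by itself give $\sup_x\|\xi_n(x)\|\leq c_n$ and $\sup_y\|\eta_n(y)\|\leq c_n$ separately. You must first rescale, replacing $(\xi_n,\eta_n)$ by $(\alpha^{-1}\xi_n,\alpha\eta_n)$ for the $\alpha>0$ that equalises the two suprema; only then are both bounded by $c_n=\sqrt{1+\delta_n}$ and the square roots $\sqrt{c_n^2-\|\xi_n(x)\|^2}$ well defined. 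This is a routine fix, and once inserted the argument goes through as you describe.
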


In other words, we get a proper function that is the sum of a cnd kernel and a bounded function, but which is not necessarily cnd itself. This condition characterises the weak Haagerup property with constant $1$, as defined in \cite{Knu2}. The question then is whether one can replace this function by an actual cnd function.

Theorem \ref{Thm_Knudby} also allows one to construct affine actions in the spirit of Theorem \ref{Thm_char_Haagerup}.(iii), but the space needs to be changed. The following theorem is a consequence of more general results regarding embeddings of $L^p$ spaces and the properties of their norms; see \cite[Theorem 6.8]{CDH}, \cite[Corollary 6.23]{CDH} and the references therein. By an $L^p$ space, we mean a space of the form $L^p(\Omega,\mu)$ for some measure space $(\Omega,\mu)$.

\begin{thm}
Let $G$ be a second countable, locally compact group. Then $G$ has the Haagerup property if and only if it has a continuous, proper, isometric, affine action on a subspace of an $L^1$ space.
\end{thm}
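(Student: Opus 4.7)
Both implications will be reductions to the characterisation of the Haagerup property via continuous, proper, conditionally negative definite functions (Theorem \ref{Thm_char_Haagerup}(ii)). Two analytic ingredients power the argument. First, by a classical Schoenberg-type computation, the kernel $(x,y)\mapsto\|x-y\|_{L^1}$ is cnd on any $L^1$-space; equivalently $(x,y)\mapsto e^{-t\|x-y\|_{L^1}}$ is positive definite for every $t>0$. Second, the Gaussian functor assigns to every orthogonal representation $\pi:G\to \mathrm{O}(H)$ on a real Hilbert space a standard probability space $(\Omega,\mu)$ carrying a measure-preserving $G$-action, together with an equivariant linear embedding $\iota: H\hookrightarrow L^2(\Omega,\mu)$ sending each $\xi\in H$ to the corresponding centred Gaussian random variable, such that the induced Koopman representation $\hat\pi(g)f=f\circ g^{-1}$ acts isometrically on every $L^p(\Omega,\mu)$ and leaves $\iota(H)$ invariant.

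\textbf{The easy direction $(\Leftarrow)$.} Assume $\alpha$ is a continuous, proper, isometric, affine action of $G$ on a closed subspace $X\subseteq L^1(\Omega,\mu)$, and fix $x_0\in X$. Set $\psi(g):=\|\alpha(g)x_0-x_0\|_{L^1}$. Since the linear part of an affine isometry preserves differences, $\|\alpha(g)x_0-\alpha(h)x_0\|_{L^1}=\|\alpha(h^{-1}g)x_0-x_0\|_{L^1}=\psi(h^{-1}g)$. Hence the kernel $(g,h)\mapsto\psi(h^{-1}g)$ is the pull-back of the cnd kernel $(x,y)\mapsto \|x-y\|_{L^1}$ by the orbit map $g\mapsto\alpha(g)x_0$, and is therefore cnd. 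Continuity and properness of $\psi$ follow from those of $\alpha$, so Theorem \ref{Thm_char_Haagerup} yields the Haagerup property.

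\textbf{The substantial direction $(\Rightarrow)$.} Assume $G$ has the Haagerup property. By Theorem \ref{Thm_char_Haagerup}(iii), fix a continuous, proper affine isometric action $\alpha(g)v=\pi(g)v+b(g)$ on a real Hilbert space $H$, with $\pi$ orthogonal and $b$ a $1$-cocycle. Apply the Gaussian functor to $(H,\pi)$ and pull back the $L^1$-norm to $H$: a standard moment calculation gives $\|\iota(\xi)\|_{L^1}=\sqrt{2/\pi}\,\|\xi\|_H$ for every $\xi\in H$, so $X:=\iota(H)$ is a closed subspace of $L^1(\Omega,\mu)$, isometric to $H$ up to the positive scalar $\sqrt{2/\pi}$, and invariant under $\hat\pi$. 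Define $\tilde b:=\iota\circ b$ and $\tilde\alpha(g)x:=\hat\pi(g)x+\tilde b(g)$. The intertwining $\iota\circ\pi(g)=\hat\pi(g)\circ\iota$ ensures that $\tilde b$ is a $1$-cocycle for $\hat\pi|_X$, so $\tilde\alpha$ is a continuous affine isometric action of $G$ on $X$. Properness of $\tilde\alpha$ follows from the identity $\|\tilde b(g)\|_{L^1}=\sqrt{2/\pi}\,\|b(g)\|_H$ together with properness of $\alpha$.

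\textbf{Main obstacle.} The technical heart of the argument is the Gaussian functor itself: one must construct a measurable, measure-preserving $G$-action on $(\Omega,\mu)$ whose Koopman representation intertwines $\pi$ with its realisation on the Gaussian subspace of $L^2(\Omega,\mu)$, and simultaneously check that $\iota$ is $L^p$-isometric (up to a $p$-dependent scalar) for every $p\in[1,\infty)$. Once this is in place both implications become essentially formal. Note that the embedding $H\hookrightarrow L^1$ is only isometric up to a positive constant, which is why one lands in a \emph{subspace} of $L^1$ rather than in $L^1$ itself; finer variants of this construction underlie the more precise $L^p$-statements of \cite{CDH}.
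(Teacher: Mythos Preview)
The paper does not prove this theorem; it simply records it as ``a consequence of more general results regarding embeddings of $L^p$ spaces and the properties of their norms'' and cites \cite[Theorem 6.8, Corollary 6.23]{CDH}. So there is nothing to compare against except the cited literature, and your argument is in fact the standard one underlying those references: Schoenberg's theorem gives that $\|x-y\|_{L^1}$ is conditionally negative definite (via the L\'evy--Khintchine representation of $|t|$ on $\R$ and integration over the measure space), which handles $(\Leftarrow)$; and the Gaussian embedding of a real Hilbert space into the first Wiener chaos, equivariantly for the orthogonal/Koopman representations, handles $(\Rightarrow)$. Both directions are correct as you have written them.

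One expository wrinkle: your closing remark that ``the embedding $H\hookrightarrow L^1$ is only isometric up to a positive constant, which is why one lands in a subspace of $L^1$'' conflates two unrelated things. The scalar $\sqrt{2/\pi}$ is irrelevant---you could absorb it by rescaling. The reason you land in a proper subspace is structural: the first chaos $\iota(H)$ is a closed, $\hat\pi$-invariant subspace of $L^1(\Omega,\mu)$, and your cocycle $\tilde b$ takes values there, so the affine action lives on $\iota(H)$ regardless of any normalising constant. What actually matters is that $\hat\pi$ restricts to an $L^1$-isometry of $\iota(H)$ (since it is a global $L^1$-isometry), making $\tilde\alpha$ genuinely isometric for the $L^1$-norm on that subspace. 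You should also say a word about strong continuity of $g\mapsto\hat\pi(g)$ on $(\iota(H),\|\cdot\|_{L^1})$; it follows immediately from the identity $\|\hat\pi(g)\iota(\xi)-\iota(\xi)\|_{L^1}=\sqrt{2/\pi}\,\|\pi(g)\xi-\xi\|_H$ and strong continuity of $\pi$, but it is worth making explicit since the theorem demands a continuous action.
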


The following was proved in \cite[Theorem 1.6]{Ver3} and \cite[Theorem 1.1]{Ver3}, using Theorem \ref{Thm_Knudby}.

\begin{thm}
Let $\Gamma$ be a countable, discrete group with $\boldsymbol\Lambda(\Gamma)=1$. Then, for every $\varepsilon>0$, $\Gamma$ has a proper, affine, $(1+\varepsilon)$-Lipschitz action on a subspace of an $L^1$ space.
\end{thm}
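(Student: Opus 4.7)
The plan is to derive the theorem as a direct consequence of Theorem \ref{Thm_Knudby}. That result already furnishes a proper function $\psi:\Gamma\to[0,\infty)$ together with a Hilbert space $\mathcal{H}$ and maps $R,S:\Gamma\to\mathcal{H}$ such that
$$\psi(y^{-1}x)=\|R(x)-R(y)\|^2+\|S(x)+S(y)\|^2,$$
so the task reduces to converting this Hilbert-valued decomposition into a proper affine $(1+\varepsilon)$-Lipschitz action on a subspace of an $L^1$ space. My main tool will be the classical Gaussian embedding $T:\mathcal{H}\to L^1(\Omega,\mathbb{P})$ given by $T\xi=\sum_i g_i\langle\xi,e_i\rangle$, where $(e_i)$ is an orthonormal basis of $\mathcal{H}$ and $(g_i)$ is an i.i.d.\ family of standard Gaussians on $(\Omega,\mathbb{P})$. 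Since $\|T\xi\|_1=\sqrt{2/\pi}\,\|\xi\|$, this linearly embeds $\mathcal{H}$ isometrically (up to a constant) into $L^1$ and faithfully converts Hilbert distances into $L^1$ distances.

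Applying $T$ to $R$ produces $F_R(x)=TR(x)$ with $\|F_R(x)-F_R(y)\|_1=\sqrt{2/\pi}\,\|R(x)-R(y)\|$, and the standard GNS construction associated with the cnd kernel $\|R(x)-R(y)\|^2$ (transported through $T$) turns $b_R(x)=F_R(x)-F_R(e)$ into a genuine $1$-cocycle for an isometric linear action of $\Gamma$ on the closed span of the $F_R(x)$'s. The main obstacle is the skew term $\|S(x)+S(y)\|^2$: because the $L^1$ norm is a metric, no $L^1$-valued map $F_S$ can realize it as a squared distance between its own values. The idea is to exploit the extra room given by the $(1+\varepsilon)$-Lipschitz requirement and encode $S$ via a $\mathbb{Z}/2\mathbb{Z}$-twisted Gaussian construction, in which the nontrivial element acts by sign-flip on the image of $S$. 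After pushing this through $T$, one obtains an almost-isometric linear action on a subspace of $L^1$ together with a cocycle $b_S$ whose norm is controlled by $\|S(x)+S(y)\|$ on orbits, the price being that the linear part is only $(1+\varepsilon)$-Lipschitz instead of isometric.

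Concatenating $b_R$ and $b_S$ in a direct sum of $L^1$ spaces (still a subspace of an $L^1$ space via the isometric identification $L^1(\Omega_1)\oplus L^1(\Omega_2)\cong L^1(\Omega_1\sqcup\Omega_2)$) yields a cocycle $b:\Gamma\to L^1$ such that $\|b(x)\|_1$ is bounded below by a positive multiple of $\sqrt{\psi(x)}$. Since $\psi\to\infty$ at infinity in $\Gamma$, the cocycle $b$ is proper, and the induced affine action $\alpha(g)\xi=\pi(g)\xi+b(g)$ is then proper and $(1+\varepsilon)$-Lipschitz, which is exactly what is claimed.

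The hard step is clearly the middle one: constructing the twisted embedding for $S$ and quantifying the resulting deficit from isometry. If this could be carried out with no loss, one would (via Theorem \ref{Thm_char_Haagerup}) deduce the Haagerup property and hence settle Conjecture \ref{Conj_Cow2}; the loss of isometry by a factor $\varepsilon$ is precisely the price paid for skirting that open conjecture. The quantitative tuning of the $\mathbb{Z}/2\mathbb{Z}$-twist — making the Lipschitz deficit exactly $\varepsilon$ and not worse — will be the technical heart of the argument, and is where one must use the freedom to pass to suitable finite subsums or truncations of the Gaussian embedding before letting $\varepsilon\to 0$.
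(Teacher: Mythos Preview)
The paper is a survey and does not prove this theorem; it only records that the result is established in \cite{Ver3} \emph{using Theorem~\ref{Thm_Knudby}}. So your starting point is the right one, and there is nothing further in the paper to compare against. That said, your proposal has a genuine gap that would prevent it from going through as written.

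The problem is your treatment of the $R$-part. You assert that the GNS construction applied to the cnd kernel $\|R(x)-R(y)\|^2$ yields a $1$-cocycle for an isometric linear $\Gamma$-action. But GNS produces a cocycle only when the kernel is left $\Gamma$-invariant, i.e.\ when $\|R(gx)-R(gy)\|=\|R(x)-R(y)\|$ for all $g$. Knudby's theorem gives you no such thing: only the \emph{sum} $\|R(x)-R(y)\|^2+\|S(x)+S(y)\|^2=\psi(y^{-1}x)$ is invariant, not the two pieces separately. So there is no isometric $\Gamma$-action on the span of the $F_R(x)$'s making $b_R$ a cocycle, and your plan of handling $R$ and $S$ in separate $L^1$ summands and then concatenating cannot work. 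Any construction of the linear part $\pi$ must use the full invariant object $\psi$, and the interaction between the symmetric and skew pieces is exactly where the defect from isometry (the $\varepsilon$) has to appear.

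Separately, the ``$\mathbb{Z}/2\mathbb{Z}$-twisted Gaussian construction'' for $S$ is not a proof but a placeholder: you identify it as the technical heart and then do not carry it out. Even granting some doubling trick on $\Gamma\times\{\pm 1\}$ that realises $\psi(y^{-1}x)$ as a squared Hilbert distance between $(x,+)$ and $(y,-)$, you still have to build a uniformly bounded representation of $\Gamma$ (not of $\Gamma\times\mathbb{Z}/2\mathbb{Z}$) on a subspace of $L^1$ with the correct cocycle, and explain why the bound is $1+\varepsilon$ rather than some uncontrolled constant. None of that is addressed. As it stands, the proposal is a plausible heuristic for why the theorem should follow from Knudby's result, but not a proof sketch one could complete without a new idea.
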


\section{{\bf The AP of Haagerup and Kraus}}\label{Sec_AP_HK}

In this section, we discuss a weaker property than weak amenability, introduced in \cite{HaaKra}. For this purpose, we need to look at $M_0A(G)$ as a dual Banach space. 

\subsection{Weakening weak amenability}
Let $G$ be a locally compact group. Observe that the duality $(L^1(G),L^\infty(G))$ provides an embedding of $L^1(G)$ into the dual space $M_0A(G)^*$. The following was proved in \cite[Proposition 1.10]{deCHaa}.

\begin{prop}[de Canni\`ere--Haagerup]
Let $G$ be a locally compact group, and let $Q(G)$ denote the completion of $L^1(G)$ in $M_0A(G)^*$. Then the dual space of $Q(G)$ can be identified with $M_0A(G)$ for the duality
\begin{align*}
\langle\varphi,f\rangle=\int_G\varphi(t)f(t)\,dt,\quad\forall\varphi\in M_0A(G),\ \forall f\in L^1(G).
\end{align*}
\end{prop}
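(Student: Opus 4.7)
The candidate duality is the natural map
\[
T : M_0A(G) \longrightarrow Q(G)^*, \qquad T(\varphi)(f) = \int_G\varphi(t)f(t)\,dt \quad (f\in L^1(G)),
\]
extended by density to $Q(G)$, and the plan is to show it is an isometric isomorphism. The easy half is essentially bookkeeping. Since $M_0A(G)\subseteq C_b(G)$ contractively, for $\varphi\in M_0A(G)$ and $f\in L^1(G)$ one has
\[
|T(\varphi)(f)| \leq \|\varphi\|_\infty\|f\|_1 \leq \|\varphi\|_{M_0A(G)}\|f\|_1,
\]
which in particular shows $\|\cdot\|_{Q(G)}\leq\|\cdot\|_1$ on $L^1(G)$, and by the very definition of $\|f\|_{Q(G)}=\|f\|_{M_0A(G)^*}$ this sharpens to $|T(\varphi)(f)|\leq\|\varphi\|_{M_0A(G)}\|f\|_{Q(G)}$; hence $T$ is contractive. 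Injectivity is automatic, since every element of $M_0A(G)\subseteq C_b(G)$ is continuous.

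For surjectivity and the reverse isometric bound I would handle both simultaneously. Given $\Phi\in Q(G)^*$, its restriction to $L^1(G)$ is $L^1$-continuous (because $\|\cdot\|_{Q(G)}\leq\|\cdot\|_1$), so the standard $L^1$--$L^\infty$ duality produces $\varphi\in L^\infty(G)$ with $\Phi(f)=\int\varphi f$ for every $f\in L^1(G)$. It suffices to show that $\varphi\in M_0A(G)$ with $\|\varphi\|_{M_0A(G)}\leq C:=\|\Phi\|_{Q(G)^*}$; then $T(\varphi)=\Phi$ holds by density of $L^1(G)$ in $Q(G)$, yielding surjectivity and isometry at once. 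Unwinding $\|f\|_{Q(G)}$, the datum on $\varphi$ is exactly
\[
\left|\int_G\varphi f\right| \leq C\sup_{\|\psi\|_{M_0A(G)}\leq 1}\left|\int_G\psi f\right|,\qquad\forall f\in L^1(G).
\]
I would then apply Hahn--Banach separation in $L^\infty(G)$ equipped with $\sigma(L^\infty,L^1)$: if $\varphi\notin C\cdot B$, where $B$ is the closed unit ball of $M_0A(G)\subseteq L^\infty(G)$, there would exist $f\in L^1(G)$ strictly separating $\varphi$ from the balanced convex set $C\cdot B$, contradicting the displayed inequality. The whole proof thus reduces to the \emph{key claim} that $B$ is $\sigma(L^\infty,L^1)$-closed inside $L^\infty(G)$.

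To establish the key claim I would appeal to Theorem \ref{Thm_Boz-Fen}. Given a net $(\varphi_\alpha)$ in $B$, fix representations $\varphi_\alpha(s^{-1}t)=\langle\xi_\alpha(t),\eta_\alpha(s)\rangle_{\mathcal{H}_\alpha}$ with $\sup_t\|\xi_\alpha(t)\|\cdot\sup_s\|\eta_\alpha(s)\|\leq 1$. Choose an ultrafilter $\mathcal{U}$, form the Hilbert-space ultraproduct $\widetilde{\mathcal{H}}=\prod_\alpha\mathcal{H}_\alpha/\mathcal{U}$, and set $\xi(t)=[\xi_\alpha(t)]$, $\eta(s)=[\eta_\alpha(s)]$. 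Then $\widetilde{\varphi}(s^{-1}t):=\langle\xi(t),\eta(s)\rangle=\lim_\mathcal{U}\varphi_\alpha(s^{-1}t)$ is a well-defined pointwise ultralimit satisfying the Bo\.{z}ejko--Fendler bound with norm $\leq 1$. Because $\|\varphi_\alpha\|_\infty\leq 1$ uniformly, bounded pointwise ultralimits agree with $\sigma(L^\infty,L^1)$-limits by dominated convergence, so any weak-$*$ cluster point of $(\varphi_\alpha)$ in $L^\infty(G)$ coincides a.e.\ with $\widetilde{\varphi}\in B$.

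The principal technical obstacle is the locally-compact (non-discrete) case of this key claim: the Bo\.{z}ejko--Fendler theorem as stated in Section \ref{Ssec_Fourier_mult} requires $\xi,\eta$ to be \emph{continuous}, whereas ultraproducts do not preserve continuity and the $\sigma(L^\infty,L^1)$-limit $\varphi$ is not a priori continuous either. One would circumvent this by invoking Haagerup's measurable strengthening of Bo\.{z}ejko--Fendler from \cite[Theorem 3.2]{Haa2}, which characterises $M_0A(G)$ without imposing continuity on $\xi,\eta$, or alternatively by showing directly that the unit ball of $M_0A(G)$ is compact for uniform convergence on compacta and that this topology dominates $\sigma(L^\infty,L^1)$ on bounded sets. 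For discrete $\Gamma$ neither subtlety arises, and the argument above is completely elementary.
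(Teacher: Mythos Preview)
The paper does not prove this proposition; it records the statement and cites \cite[Proposition 1.10]{deCHaa}. So there is no in-paper argument to compare against, and your proposal must stand on its own.

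Your overall strategy---reduce surjectivity and isometry of $T$ to the $\sigma(L^\infty,L^1)$-closedness of the unit ball $B$ of $M_0A(G)$ in $L^\infty(G)$, then verify that via Bo\.{z}ejko--Fendler---is sound, and for discrete $\Gamma$ your ultraproduct argument is correct: weak-$*$ convergence in $\ell^\infty(\Gamma)$ implies pointwise convergence (test against $\delta_g$), so the pointwise ultralimit and the weak-$*$ limit coincide.

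For non-discrete $G$ there is a genuine gap, and it is not the one you flag. The assertion that ``bounded pointwise ultralimits agree with $\sigma(L^\infty,L^1)$-limits by dominated convergence'' is false: dominated convergence does not hold along nets or ultrafilters. Concretely, on $G=\mathbb{R}$ the characters $\varphi_n(t)=e^{int}$ lie in the unit ball of $B(\mathbb{R})\subseteq M_0A(\mathbb{R})$ and tend to $0$ in $\sigma(L^\infty,L^1)$ by Riemann--Lebesgue, yet for any free ultrafilter the pointwise ultralimit has modulus $1$ everywhere and so cannot represent the weak-$*$ limit. Thus $\widetilde{\varphi}$ bears no relation to $\varphi$, and invoking Haagerup's measurable Bo\.{z}ejko--Fendler theorem for $\widetilde{\varphi}$ tells you nothing about $\varphi$. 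The same sequence also refutes your alternative (b): $(e^{in\cdot})$ has no subnet converging uniformly on any interval of positive length, so $B$ is \emph{not} compact for uniform convergence on compacta.

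What is needed is an argument that works directly with the weak-$*$ limit $\varphi\in L^\infty(G)$, without detouring through a pointwise limit. One route is to express the $M_0A$-norm as a supremum of $\sigma(L^\infty,L^1)$-continuous functionals (e.g.\ via the Schur-multiplier action on finite-rank or trace-class operators on $L^2(G)$), making the unit ball an intersection of weak-$*$ closed half-spaces; another is to embed $M_0A(G)$ isometrically into the normal completely bounded maps on $LG$ and check weak-$*$ closedness there. Either route requires substantially more than what you have sketched.
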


This allows us to consider the weak* topology $\sigma(M_0A(G),Q(G))$ on $M_0A(G)$. Weak amenability can then be characterised by means of this topology; see \cite[Theorem 1.12]{HaaKra}.

\begin{thm}[Haagerup--Kraus]
Let $G$ be a locally compact group and $C\geq 1$. Then $G$ is weakly amenable with $\boldsymbol\Lambda(G)\leq C$ if and only if the constant function $1$ is in the $\sigma(M_0A(G),Q(G))$-closure of
\begin{align*}
\left\{\varphi\in A(G)\ \mid\ \|\varphi\|_{M_0A(G)}\leq C\right\}.
\end{align*}
\end{thm}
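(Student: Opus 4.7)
My plan proves the two directions separately; the converse uses a Mazur-type convexity argument combined with the $A(G)$-module structure of $M_0A(G)$.

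Forward direction. Assume $\boldsymbol\Lambda(G)\leq C$, with a net $(\varphi_i)\subset A(G)$ satisfying $\|\varphi_i\|_{M_0A(G)}\leq C$ and $\varphi_i\to 1$ uniformly on compact subsets. Using the density of $L^1(G)$ in $Q(G)$ together with the uniform bound $\|\varphi_i-1\|_{M_0A(G)}\leq C+1$, the convergence $\langle\varphi_i,q\rangle\to\langle 1,q\rangle$ for $q\in Q(G)$ reduces to the case $q=f\in L^1(G)$. The contractive inclusion $M_0A(G)\hookrightarrow C_b(G)$ yields $|\varphi_i-1|\leq C+1$ pointwise; splitting $\int_G(\varphi_i-1)f\,dt$ over a compact set $K$ with $\int_{G\setminus K}|f|\,dt<\varepsilon$ and exploiting uniform convergence on $K$ concludes.

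Reverse direction. Assume $1$ lies in the $\sigma(M_0A(G),Q(G))$-closure of $\mathcal{C}:=\{\varphi\in A(G):\|\varphi\|_{M_0A(G)}\leq C\}$; fix a witnessing net $(\varphi_i)$. For each compact $K\subset G$ and $\varepsilon>0$, choose $a\in A(G)$ with $a|_K=1$ and $\|a\|_{A(G)}\leq 1+\varepsilon$, as recalled in Section~\ref{Ssec_Fourier_mult}. The products $\varphi_i a\in A(G)$ satisfy $\|\varphi_i a\|_{A(G)}\leq C(1+\varepsilon)$, and for each $f\in L^1(G)$ the function $fa$ lies in $L^1(G)\subseteq Q(G)$, giving
$$
\langle\lambda(f),\varphi_i a\rangle \;=\; \langle\varphi_i,fa\rangle_{M_0A(G),Q(G)} \;\longrightarrow\; \int_G fa\,dt \;=\; \langle\lambda(f),a\rangle.
$$
Upgrading this to weak convergence $\varphi_i a\to a$ in the Banach space $A(G)=(LG)_*$ allows Mazur's theorem to produce convex combinations $\tilde\varphi_n\in\operatorname{conv}\{\varphi_i\}\subseteq\mathcal{C}$ with $\|\tilde\varphi_n a-a\|_{A(G)}\to 0$. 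Since $a|_K=1$, on $K$ one has $\tilde\varphi_n-1=\tilde\varphi_n a-a$, so $\sup_{t\in K}|\tilde\varphi_n(t)-1|\leq\|\tilde\varphi_n a-a\|_{A(G)}\to 0$; a diagonal argument over pairs $(K,\varepsilon)$ then yields the desired net in $\mathcal{C}$, proving $\boldsymbol\Lambda(G)\leq C$.

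Main obstacle. The key technical step is promoting convergence against $\lambda(L^1(G))$ to weak convergence of $(\varphi_i a)$ in $A(G)$, i.e., convergence against every $x\in LG$. Reformulating via the dual multipliers, $\langle x,\varphi_i a\rangle=\langle M_{\varphi_i}(x),a\rangle$, so the task is to show $M_{\varphi_i}(x)\to x$ weak* in $LG$ for each $x\in LG$. Here the uniform complete bound $\|M_{\varphi_i}\|_{\mathrm{cb}}\leq C$, the weak*-continuity of each $M_{\varphi_i}:LG\to LG$, and Kaplansky's density theorem (which identifies the unit ball of $LG$ with the weak*-closure of the unit ball of $\lambda(L^1(G))$) combine to propagate the convergence from the weak*-dense subspace $\lambda(L^1(G))$ to all of $LG$. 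This is the operator-algebraic heart of the proof, distinguishing it from a purely formal statement about dual Banach spaces.
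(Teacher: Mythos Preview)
The forward direction is fine. In the reverse direction, however, the step you flag as the ``main obstacle'' is genuinely problematic, and the justification you offer does not go through. You assert that the uniform bound $\|M_{\varphi_i}\|_{\mathrm{cb}}\leq C$, the weak*-continuity of each $M_{\varphi_i}$, and Kaplansky density together propagate the convergence $M_{\varphi_i}(y)\to y$ (weak*) from $y\in\lambda(L^1(G))$ to all $x\in LG$. This principle is false: uniformly bounded normal maps $T_i$ on a von Neumann algebra $\mathcal{M}$ may satisfy $T_i y\to y$ weak* for all $y$ in a weak*-dense subalgebra without $T_i x\to x$ weak* for every $x\in\mathcal{M}$. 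Concretely, on $\mathcal{M}=\ell^\infty(\mathbb{N})$ take $T_i x = x + x(i)e_1$; each $T_i$ is normal with $\|T_i\|\leq 2$, and $T_i y=y$ eventually for $y\in c_{00}$, yet $T_i(1,1,1,\ldots)=(2,1,1,\ldots)$ for every $i$. The underlying issue is a double-limit interchange: weak*-density lets you approximate $x$ by a net $(y_\alpha)$, but the error $\langle M_{\varphi_i}(x-y_\alpha),a\rangle=\langle x-y_\alpha,\varphi_i a\rangle$ cannot be controlled uniformly in $i$, since $\varphi_i a$ varies.

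What is actually needed---and what the Haagerup--Kraus argument supplies---is that for every $x\in LG$ and $a\in A(G)$ the functional $\varphi\mapsto\langle x,\varphi a\rangle$ on $M_0A(G)$ lies in the predual $Q(G)$ (equivalently, that the map $m_a:M_0A(G)\to A(G)$ is weak*-to-weak continuous). This is nontrivial and does not follow from Kaplansky density alone; once it is established, your Mazur argument works. Alternatively, one can bypass the issue entirely: from $\langle\varphi_i,f\rangle\to\langle 1,f\rangle$ for $f\in L^1(G)$ one gets, for any positive $k\in C_c(G)$ with $\int k=1$, that $\check{k}*\varphi_i*k\to 1$ uniformly on compacta (because $\{t\mapsto k(\cdot^{-1}t):t\in K\}$ is compact in $L^1(G)$ and the functionals $\varphi_i-1$ are equicontinuous on $L^1$), while two-sided convolution by $k$ preserves membership in $A(G)$ and does not increase the $M_0A$-norm. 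Either route fills the gap; your Kaplansky appeal does not. (Note also that the paper does not give a proof of this theorem but refers to \cite{HaaKra}, so there is no in-text argument to compare against.)
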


This motivates the following definition. We say that $G$ has the approximation property (AP) if the constant function $1$ is in the $\sigma(M_0A(G),Q(G))$-closure of $A(G)$ in $M_0A(G)$. Hence every weakly amenable group has the AP.

This property is not to be confused with the approximation property for Banach spaces described in Section \ref{Ssec_AP_Banach}. In some sense, the AP of Haagerup and Kraus is the operator algebraic version of the AP for Banach spaces. More precisely, the following was proved in \cite[Theorem 2.1]{HaaKra}.

\begin{thm}[Haagerup--Kraus]
Let $\Gamma$ be a discrete group. The following are equivalent:
\begin{itemize}
\item[(i)] The group $\Gamma$ has the AP.
\item[(ii)] The $\mathbf{C}^*$-algebra $C_\lambda^*(\Gamma)$ has the operator approximation property (OAP).
\item[(iii)] The von Neumann algebra $L\Gamma$ has the weak* operator approximation property (w*-OAP).
\end{itemize}
\end{thm}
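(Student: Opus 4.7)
The plan is to mirror the three-way argument already given for Theorem \ref{Thm_wa_CBAP}, adapting it to the slightly weaker setting where the approximating maps need not have uniformly bounded completely bounded norms. The bridge between the three formulations is again provided by completely bounded Herz--Schur multipliers and the trace-based construction $\varphi(t) = \tau(\lambda(t)^* T \lambda(t))$, but now norm (resp.\ weak*) convergence of $T_i x \to x$ on $C^*_\lambda(\Gamma)$ (resp.\ $L\Gamma$) must be replaced by norm convergence of $(T_i \otimes \mathrm{id}_{\mathbf{K}})(y) \to y$ in $C^*_\lambda(\Gamma) \otimes_{\min} \mathbf{K}$ (resp.\ a weak*-tensor analogue for $L\Gamma \,\bar\otimes\, \mathbf{B}(\ell^2)$), where $\mathbf{K} = \mathbf{K}(\ell^2)$. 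The crucial observation is that the topology $\sigma(M_0A(\Gamma), Q(\Gamma))$ was engineered exactly to correspond, after applying $\varphi \mapsto M_\varphi$, to this kind of tensor-product convergence against compact operators.

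For (i)$\Rightarrow$(iii)$\Rightarrow$(ii), I would start with a net $(\varphi_i)$ in $A(\Gamma)$, which by density we may take to be finitely supported, converging to $1$ in $\sigma(M_0A(\Gamma), Q(\Gamma))$. The associated multipliers $M_{\varphi_i}:L\Gamma \to L\Gamma$ are then finite-rank and weak*-continuous. The main technical step is to identify, for each $x \in L\Gamma \,\bar\otimes\, \mathbf{B}(\ell^2)$ and each normal functional $\omega$ on $L\Gamma \,\bar\otimes\, \mathbf{B}(\ell^2)$, a canonical element $f_{x,\omega} \in Q(\Gamma)$ so that
\begin{align*}
\langle (M_{\varphi_i} \otimes \mathrm{id})(x) - x, \omega \rangle = \langle \varphi_i - 1, f_{x,\omega} \rangle,
\end{align*}
which then gives the w*-OAP. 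Descending to $C^*_\lambda(\Gamma)$ proceeds exactly as in the proof of Theorem \ref{Thm_wa_CBAP}: the maps $M_{\varphi_i}$ restrict to finite-rank maps on $C^*_\lambda(\Gamma)$, and OAP follows on the dense subalgebra $\mathbf{C}[\Gamma]\otimes\mathbf{K}$, then by a $3\varepsilon$-argument on all of $C^*_\lambda(\Gamma) \otimes_{\min} \mathbf{K}$.

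For the converses (ii)$\Rightarrow$(i) and (iii)$\Rightarrow$(i), I would reuse the trace trick. Given finite-rank maps $T_i$ witnessing the (w*-)OAP, define $\varphi_i(t) = \tau(\lambda(t)^* T_i \lambda(t))$. By \cite[Lemma 2.5]{Haa2}, $\varphi_i$ lies in $\ell^2(\Gamma) \subseteq A(\Gamma)$ because $T_i$ has finite rank. To establish AP, one must show $\varphi_i \to 1$ in $\sigma(M_0A(\Gamma), Q(\Gamma))$, that is, $\langle \varphi_i - 1, f\rangle \to 0$ for every $f \in Q(\Gamma)$. The proof proceeds by dualizing the above pairing: the functional $f$ is realized via a trace-class operator $\omega$ on $\ell^2(\Gamma) \otimes \ell^2$ and an element $x \in C^*_\lambda(\Gamma) \otimes_{\min} \mathbf{K}$, and the convergence $(T_i \otimes \mathrm{id})(x) \to x$ afforded by OAP translates directly to $\langle \varphi_i, f \rangle \to \langle 1, f \rangle$.

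The main obstacle is the precise identification of the predual $Q(\Gamma)$ with a space of slice functionals on $C^*_\lambda(\Gamma) \otimes_{\min} \mathbf{K}$: one must show that the canonical map sending a pair $(x, \omega)$ to the functional $\varphi \mapsto \langle (M_\varphi \otimes \mathrm{id})(x), \omega\rangle$ lands in $Q(\Gamma)$ and that such functionals are norm-dense in $Q(\Gamma)$. This is where the slice-map machinery, combined with Fell's absorption principle and the trace $\tau$, does the real work, and where the assumption that $\Gamma$ is discrete (so that $\tau$ is faithful and tracial) is used. Once this duality is established, both implications in the equivalence become a matter of unwinding definitions and invoking the finite-rank structure of the approximants.
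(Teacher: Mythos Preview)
The paper does not supply its own proof of this theorem; it simply cites \cite[Theorem 2.1]{HaaKra}. So there is no in-paper argument to compare against. Your outline nonetheless tracks the structure of the original Haagerup--Kraus proof: multipliers $M_{\varphi_i}$ carry the forward implications, the trace trick $\varphi_i(t)=\tau(\lambda(t)^*T_i\lambda(t))$ handles the converses, and the substantive work is the identification of $Q(\Gamma)$ with the space of slice functionals $\varphi\mapsto\langle(M_\varphi\otimes\mathrm{id})(x),\omega\rangle$ on $C^*_\lambda(\Gamma)\otimes_{\min}\mathcal{K}$.

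There is, however, a genuine gap in your route to (ii). You claim that once $(M_{\varphi_i}\otimes\mathrm{id})(x)\to x$ holds on the dense set $\C[\Gamma]\otimes\mathcal{K}$, a $3\varepsilon$-argument ``exactly as in the proof of Theorem~\ref{Thm_wa_CBAP}'' extends it to all of $C^*_\lambda(\Gamma)\otimes_{\min}\mathcal{K}$. But that $3\varepsilon$ step in Theorem~\ref{Thm_wa_CBAP} uses the uniform bound $\|M_{\varphi_i}\|_{\mathrm{cb}}\leq C$ to control $\|M_{\varphi_i}(x-\lambda(f))\|$, and this bound is precisely what the AP does \emph{not} provide: for a group like $\operatorname{SL}(2,\Z)\ltimes\Z^2$, which has AP but is not weakly amenable, every approximating net necessarily has $\sup_i\|\varphi_i\|_{M_0A(\Gamma)}=\infty$, so the $3\varepsilon$ estimate blows up.

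The repair is not density but convexity. The stable point-norm topology on $CB(C^*_\lambda(\Gamma))$ (seminorms $T\mapsto\|(T\otimes\mathrm{id})(x)\|$, $x\in C^*_\lambda(\Gamma)\otimes_{\min}\mathcal{K}$) is locally convex, so by Mazur's theorem the closure of any convex set for this topology agrees with its closure for the weaker $\sigma(CB,\cdot)$-topology coming from your slice-functional predual. Since the finite-rank multiplier maps form a convex set and convex combinations of finitely supported $\varphi_i$'s remain finitely supported in $A(\Gamma)$, the $\sigma(M_0A(\Gamma),Q(\Gamma))$-convergence $\varphi_i\to 1$ yields, after passing to convex combinations, a net witnessing the OAP in the required stable point-norm sense. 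This convexity step replaces the $3\varepsilon$-argument and is the missing ingredient in your sketch.
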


These properties are defined as follows. We say that a $\mathbf{C}^*$-algebra $\mathcal{A}$ has the OAP if there is a net of finite rank maps $T_i:\mathcal{A}\to\mathcal{A}$ such that
\begin{align*}
\|(T_i\otimes\operatorname{id})x-x\|\to 0,\quad\forall x\in\mathcal{A}\otimes_{\mathrm{min}}\mathcal{K},
\end{align*}
where $\mathcal{K}$ denotes the algebra of compact operators on $\ell^2$. The w*-OAP for von Neumann algebras is defined in similar fashion; see \cite[\S 12.4]{BroOza} for details.

\subsection{Examples and non-examples}
Like weak amenability, the AP is stable under taking direct products and closed subgroups; see \cite[Proposition 1.14]{HaaKra}. One big difference between these properties can be seen in the following result; see \cite[Theorem 1.15]{HaaKra}.

\begin{thm}[Haagerup--Kraus]
Let $G$ be a locally compact group, and let $H$ be a closed, normal subgroup of $G$. If both $H$ and $G/H$ have the AP, then so does $G$.
\end{thm}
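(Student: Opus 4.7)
The plan is to produce a net in $A(G)$ converging to $1$ in the $\sigma(M_0A(G),Q(G))$-topology by combining an approximation coming from $G/H$ with one coming from $H$, using the fact that $A(G)$ is an ideal in $M_0A(G)$ (since $M_0A(G)\subseteq MA(G)$ by definition).

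First, I would push the AP of $G/H$ up to $G$. Let $q:G\to G/H$ denote the quotient map. The pullback $q^*:M_0A(G/H)\to M_0A(G)$, $\varphi\mapsto\varphi\circ q$, is contractive: simply compose the Bo\.z{}ejko--Fendler data (Theorem \ref{Thm_Boz-Fen}) $\xi,\eta:G/H\to\mathcal{H}$ for $\varphi$ with $q$ to obtain admissible data for $q^*\varphi$. Moreover, $q^*$ is the adjoint of the fiberwise averaging map $L^1(G)\to L^1(G/H)$, $f\mapsto\bigl(\bar s\mapsto\int_H f(sh)\,dh\bigr)$, which passes to a contraction $Q(G)\to Q(G/H)$; hence $q^*$ is weak*-to-weak* continuous. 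The AP of $G/H$ supplies a net $(\varphi_\alpha)\subset A(G/H)$ with $q^*\varphi_\alpha\to 1$ weak* in $M_0A(G)$. However, these pullbacks are constant on $H$-cosets and hence not in $A(G)$, so a second step is required.

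Second, I would bridge to $A(G)$ using the AP of $H$. Choose $(\psi_\beta)\subset A(H)$ with $\psi_\beta\to 1$ weak* in $M_0A(H)$; uniform boundedness gives $\sup_\beta\|\psi_\beta\|_{M_0A(H)}<\infty$. Fix a compactly supported $u_0\in A(G)$ with $u_0|_H\not\equiv 0$, so that $\psi_\beta\cdot u_0|_H\in A(H)$ with uniformly bounded $A(H)$-norm, via $\|\psi_\beta u_0|_H\|_{A(H)}\leq\|\psi_\beta\|_{M_0A(H)}\|u_0|_H\|_{A(H)}$. By Herz's extension theorem, extend each $\psi_\beta\cdot u_0|_H$ to some $\Psi_\beta\in A(G)$ of equal norm, so $(\Psi_\beta)$ is uniformly bounded in $M_0A(G)$. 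Since $A(G)$ is an ideal in $M_0A(G)$, every product $\Psi_\beta\cdot q^*\varphi_\alpha$ lies in $A(G)$ and is uniformly bounded in $M_0A(G)$.

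Finally, a diagonal argument in the triple $(\alpha,\beta,u_0)$ --- where $u_0$ ranges over a net of compactly supported functions in $A(G)$ tending to $1$ uniformly on compact sets --- is intended to produce a subnet of $(\Psi_\beta\cdot q^*\varphi_\alpha)$ converging weak* to $1$. Testing against $\omega\in Q(G)$, one first uses $q^*\varphi_\alpha\to 1$ combined with boundedness of $\Psi_\beta$ to reduce to $\Psi_\beta$, then applies the AP of $H$ on the fiber side to handle the $\beta$-limit, and finally lets $u_0$ vary to recover $1$. The hard part is the bookkeeping: the Herz extension is not canonical, and the weak* topology on $M_0A(G)$ does not obviously decompose into base and fiber contributions. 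Making the iterated limits compatible with the pairing against an arbitrary $\omega\in Q(G)$ --- via a fiberwise decomposition of $Q(G)$ adapted to the fibration $G\to G/H$, and via a coherent choice of the Herz extensions $\Psi_\beta$ --- is the technical heart of the argument.
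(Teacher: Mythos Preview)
The paper does not contain a proof of this theorem; it is stated with a reference to \cite[Theorem 1.15]{HaaKra} and nothing more. So there is no in-paper argument to compare against. I can, however, point to a genuine gap in your proposal.

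You write ``uniform boundedness gives $\sup_\beta\|\psi_\beta\|_{M_0A(H)}<\infty$'' for a net $(\psi_\beta)\subset A(H)$ converging to $1$ in the $\sigma(M_0A(H),Q(H))$-topology. This is false: a weak*-convergent \emph{net} need not be norm-bounded, and the Banach--Steinhaus theorem does not apply (convergence of a net does not give the uniform pointwise bound the theorem requires). In fact, if every AP net could be taken bounded in $M_0A$, then by the Haagerup--Kraus characterisation quoted just above the definition of AP, the group would be weakly amenable --- and the very purpose of this section is that AP is strictly weaker, with $\operatorname{SL}(2,\R)\ltimes\R^2$ as witness. Everything downstream in your argument (the norm control on $\Psi_\beta$, on the products $\Psi_\beta\cdot q^*\varphi_\alpha$, and the diagonal extraction) rests on this incorrect boundedness claim.

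There is a second, structural problem even if one ignores the first. The Herz extension $\Psi_\beta$ of $\psi_\beta\cdot u_0|_H$ controls the function only on the single coset $H$; on every other coset $sH$ it is arbitrary. Multiplying by $q^*\varphi_\alpha$, which is constant on cosets, does nothing to repair this. So there is no mechanism in your construction by which $\Psi_\beta\cdot q^*\varphi_\alpha$ could approach $1$ away from $H$. The AP of $H$ has to be transported to \emph{every} fiber of $G\to G/H$, not just to $H$ itself; this is the actual technical content of the Haagerup--Kraus argument, and it requires a genuinely different device (a measurable cross-section and an averaging/integration over the fibers) rather than a single Herz extension.
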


As a consequence, the semidirect product $\operatorname{SL}(2,\R)\ltimes\R^2$ satisfies AP, and we know from Theorem \ref{Thm_Oza_nonwa} that this group is not weakly amenable. Furthermore, for connected Lie groups, the question of which ones have AP is completely settled; see \cite[Theorem C]{HaKndL}.

\begin{thm}\label{Thm_AP_Lie}
Let $G$ be a connected Lie group, and let $G=RS$ be a Levi decomposition, where $S$ is locally isomorphic to a direct product $S_1\times\cdots\times S_n$ of connected simple factors. Then $G$ has the AP if and only if, for every $i\in\{1,\ldots,n\}$, the real rank of $S_i$ is at most $1$.
\end{thm}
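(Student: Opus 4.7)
My plan is to treat the two directions of the equivalence separately, leveraging the stability properties of the AP from \cite{HaaKra} together with the known (non)-AP results for simple Lie groups.

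For the forward direction, assume that each $S_i$ has real rank at most $1$. The radical $R$ is a connected solvable Lie group, hence amenable, and amenable groups have the AP (amenability $\Rightarrow$ weak amenability $\Rightarrow$ AP). For the semisimple Levi factor $S$: by Theorem \ref{Thm_Lie}, every connected simple Lie group of real rank $0$ or $1$ is weakly amenable, hence has the AP. To move from the simple factors $S_i$ to $S$ itself, which is only locally isomorphic to $\prod S_i$, I would first pass to the universal cover $\tilde S = \tilde S_1 \times \cdots \times \tilde S_n$, and use stability of the AP under direct products. One then descends to $S = \tilde S/Z$ for a discrete central subgroup $Z$ using the behaviour of AP under such quotients (essentially because completely bounded multipliers on $S$ pull back to $Z$-invariant multipliers on $\tilde S$ with the same norm). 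Finally, applying the extension theorem (the displayed result preceding Theorem \ref{Thm_AP_Lie}) to the normal sequence $R \triangleleft G$ with $G/R \cong S$ yields the AP for $G$.

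For the reverse direction, suppose $G$ has the AP and, for contradiction, that some $S_i$ has real rank at least $2$. The essential input here is the deep theorem that every connected simple Lie group of real rank $\geq 2$ fails the AP, originating in Lafforgue--de la Salle's work \cite{LafdlS} on $\operatorname{SL}(3,\R)$ and extended to the remaining higher-rank real and $p$-adic simple groups by de Laat, Haagerup, and de la Salle. Using stability of the AP under closed subgroups (the AP analogue of the corresponding stability of $\boldsymbol\Lambda$), the strategy is to locate inside $G$ a closed subgroup that is isomorphic to (a cover of) some higher-rank simple group and conclude that this subgroup fails the AP, contradicting the AP of $G$. Concretely, the Levi factor $S$ embeds as a closed subgroup of $G$, so $S$ has the AP; one then extracts a closed copy of the offending factor $S_i$ from $S$.

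The main obstacle is twofold. First, the non-AP of higher-rank simple Lie groups is the truly hard ingredient, relying on delicate $L^p$-harmonic analysis (spherical Fourier multipliers, $L^p$-boundedness of matrix coefficients, completely bounded multiplier estimates for $\operatorname{SL}(3,\R)$), and would be invoked as a black box. Second, considerable bookkeeping is required to handle the local isomorphism hypothesis: the universal cover of $\operatorname{SU}(n,1)$ has infinite centre, and analogous subtleties historically required separate treatments for weak amenability (see Hansen \cite{Han} and Dorofaeff \cite{Dor,Dor2}). One must therefore verify carefully that AP and its failure transfer correctly across universal covers and central (possibly infinite) quotients, particularly when the Levi factor $S$ is neither simply connected nor of finite centre.
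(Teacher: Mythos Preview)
The paper is a survey and does not prove this theorem; it only records the history and cites the references \cite{LafdlS}, \cite{HaadLa}, \cite{HaadLa2}, \cite{HaKndL}, \cite{Knu4}. Your outline correctly isolates exactly those ingredients---weak amenability of the rank-$\leq 1$ simple factors (Theorem~\ref{Thm_Lie}), stability of the AP under extensions and closed subgroups from \cite{HaaKra}, the deep Lafforgue--de la Salle/Haagerup--de Laat failure of AP in higher rank, and the extra care needed for infinite centres---so your proposal coincides with the approach the paper attributes to the literature.
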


Again, this theorem is a compilation of several works, that we now briefly describe. First, since weak amenability implies AP, some cases were already settled in the works cited in Section \ref{Sec_Lie}. The main question was to prove the failure of AP in higher rank. The first breakthrough was achieved in \cite{LafdlS}, where it was shown that $\operatorname{SL}(3,\R)$ does not satisfy AP. This was extended in \cite{HaadLa} to all higher rank simple Lie groups with finite centre; the finite centre condition was subsequently removed in \cite{HaadLa2}. Finally, the complete characterisation in Theorem \ref{Thm_AP_Lie} was established in \cite{HaKndL}; see also \cite{Knu4} for a different approach.

As was mentioned in Section \ref{Sec_obstr}, some of the results presented there were actually proved for the AP. More concretely, in Theorems \ref{Thm_wa->ex}, \ref{Thm_Laf_dlS} and \ref{Thm_LdlSW}, one can replace weak amenability by AP, and the results are still valid.

\subsection{The convoluters and pseudo-measures problem}
Let $G$ be a locally compact group endowed with a left Haar measure, and let $\rho:G\to\mathbf{U}(L^2(G))$ be the right regular representation:
\begin{align*}
\rho(s)f(t)=f(ts)\Delta(s)^{1/2},\quad\forall s,t\in G,\ \forall f\in L^2(G),
\end{align*}
where $\Delta:G\to(0,\infty)$ denotes the modular function. As a consequence of the Kaplansky density theorem (see e.g. \cite[Theorem 4.3.3]{Mur}), the group von Neumann algebra $LG$ can be identified with the commutant of $\rho(G)$. More precisely,
\begin{align*}
LG=\rho(G)'=\{T\in\mathbf{B}(L^2(G))\ \mid\ T\rho(s)=\rho(s)T,\ \forall s\in G\}.
\end{align*}
The convoluters and pseudo-measures problem asks whether this fact still holds in the context of operators on $L^p$ spaces. Let $p\in(1,\infty)$. We denote by $\lambda_p$ and $\rho_p$ the left and right regular representations on $L^p(G)$. In other words,
\begin{align*}
\lambda_p(s)f(t)&=f(s^{-1}t), & \rho_p(s)f(t)&=f(ts)\Delta(s)^{1/p},
\end{align*}
for all $s,t\in G$ and $f\in L^p(G)$. We define the algebra of $p$-pseudo measures $PM_p(G)$ as the weak*-closed linear span of $\lambda_p(G)$ in $\mathcal{B}(L^p(G))$. Here $\mathcal{B}(L^p(G))$ is viewed as the dual space of the projective tensor product $L^{p'}(G)\hat{\otimes}L^p(G)$, where $p'$ denotes the H\"older conjugate of $p$; see \cite[\S 2.2]{Rya} for details. The algebra of $p$-convoluters $CV_p(G)$ is simply defined as the commutant of $\rho_p(G)$. Since the representations $\lambda_p$ and $\rho_p$ commute with each other, we always have
\begin{align*}
PM_p(G)\subseteq CV_p(G).
\end{align*}
The following was proved by Daws and Spronk in \cite[Theorem 1.1]{DawSpr}.

\begin{thm}\label{Thm_conv_pseu}
Let $G$ be a locally compact group satisfying AP. Then, for every $p\in(1,\infty)$,
\begin{align*}
PM_p(G)=CV_p(G).
\end{align*}
\end{thm}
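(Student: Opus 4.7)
The strategy is to prove the non-trivial inclusion $CV_p(G) \subseteq PM_p(G)$ by approximating each $T \in CV_p(G)$ by a net in $PM_p(G)$ in the weak* topology of $\mathcal{B}(L^p(G)) = (L^{p'}(G)\hat{\otimes} L^p(G))^*$. Since $PM_p(G)$ is weak*-closed by definition, this will suffice. The mechanism is to transport the $\sigma(M_0A(G), Q(G))$-approximation of $1$ by elements of $A(G)$ furnished by AP into a weak* approximation of $T$ via an $L^p$-analogue of the Fourier multiplier action on $LG$ described in \eqref{mult_LG}.

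The first step is to construct, for every $\varphi \in M_0A(G)$, a bounded linear map $\widetilde{M}_\varphi^{(p)} : CV_p(G) \to CV_p(G)$ extending the correspondence $\lambda_p(f) \mapsto \lambda_p(\varphi f)$ and satisfying $\|\widetilde{M}_\varphi^{(p)}\| \leq \|\varphi\|_{M_0A(G)}$. The construction rests on the Bożejko--Fendler factorisation (Theorem \ref{Thm_Boz-Fen}): writing $\varphi(s^{-1}t) = \langle \xi(t), \eta(s)\rangle$ with bounded maps $\xi, \eta : G \to \mathcal{H}$, one adapts the Herz--Schur multiplier formalism to the $L^p$ setting, for example through a factorisation involving pointwise multiplication operators with values in $\mathcal{B}(L^p(G;\mathcal{H}))$. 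Translation-invariance built into the formula $\varphi(s^{-1}t) = \langle \xi(t), \eta(s)\rangle$ ensures that the image commutes with $\rho_p(G)$, hence remains inside $CV_p(G)$. Crucially, when $\varphi \in A(G)$, one must verify that $\widetilde{M}_\varphi^{(p)}(T)$ lies in $PM_p(G)$ for every $T \in CV_p(G)$; the idea is that $\varphi$ is built from coefficients of $\lambda$, and so the multiplier "localises" the spectral content of $T$ to the reduced $\lambda_p$-algebra.

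Next, apply AP to select a net $(\varphi_i) \subset A(G)$ with $\varphi_i \to 1$ in $\sigma(M_0A(G), Q(G))$. By the previous step, $\widetilde{M}_{\varphi_i}^{(p)}(T) \in PM_p(G)$ for every $i$. For each $\omega \in L^{p'}(G) \hat{\otimes} L^p(G)$, one associates a functional $\tau_{T,\omega} \in Q(G)$ such that
\begin{equation*}
\langle \widetilde{M}_\varphi^{(p)}(T), \omega \rangle = \langle \varphi, \tau_{T,\omega}\rangle, \quad \forall \varphi \in M_0A(G).
\end{equation*}
Once this identification is in place, the convergence of $\varphi_i$ in $\sigma(M_0A(G), Q(G))$ yields $\langle \widetilde{M}_{\varphi_i}^{(p)}(T), \omega\rangle \to \langle 1, \tau_{T,\omega}\rangle = \langle T, \omega\rangle$ for every $\omega$, which combined with the uniform bound $\|\widetilde{M}_{\varphi_i}^{(p)}\| \leq \|\varphi_i\|_{M_0A(G)}$ gives the required weak* convergence $\widetilde{M}_{\varphi_i}^{(p)}(T) \to T$ in $\mathcal{B}(L^p(G))$. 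Closedness of $PM_p(G)$ then delivers $T \in PM_p(G)$.

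The main obstacle is ensuring that $\tau_{T,\omega}$ genuinely lies in $Q(G)$ rather than merely in $M_0A(G)^*$. Since $Q(G)$ is defined as the completion of $L^1(G)$ inside $M_0A(G)^*$, this requires the functional $\varphi \mapsto \langle \widetilde{M}_\varphi^{(p)}(T), \omega\rangle$ to be approximable in the $M_0A(G)^*$-norm by integration against $L^1$-functions. For elementary tensors $\omega = g \otimes f$ with $f \in L^p(G)$, $g \in L^{p'}(G)$, one expects an explicit formula expressing $\tau_{T, g \otimes f}$ as an $L^1$-function built from the action of $T$ together with $f$ and $g$; extending to arbitrary $\omega$ by density in the projective tensor norm is delicate and relies on the fine structural properties of the map $\widetilde{M}_\varphi^{(p)}$ constructed in the first step, together with the concrete description of the predual $Q(G)$.
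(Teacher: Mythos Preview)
The paper does not actually contain a proof of this theorem: it is stated with attribution to Daws and Spronk \cite{DawSpr}, followed only by historical remarks. Indeed, the introduction explicitly says that the sole result proved in the paper is Theorem~\ref{Thm_wa_CBAP}. So there is no proof in the paper against which to compare your proposal.

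That said, your outline is the correct strategy and matches what is done in \cite{DawSpr}. The architecture you describe---construct an $L^p$-multiplier action $\widetilde{M}_\varphi^{(p)}$ on $CV_p(G)$ via the Bo\.zejko--Fendler factorisation, show it lands in $PM_p(G)$ when $\varphi\in A(G)$, and then use the $\sigma(M_0A(G),Q(G))$-approximation from AP together with a predual pairing to get weak*-convergence---is exactly the skeleton of the published proof. You have also correctly identified the two genuinely delicate points: (i) that $\widetilde{M}_\varphi^{(p)}(T)\in PM_p(G)$ for $\varphi\in A(G)$ and arbitrary $T\in CV_p(G)$, and (ii) that the functional $\tau_{T,\omega}$ belongs to $Q(G)$ rather than just $M_0A(G)^*$. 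Your treatment of both is more of a wish than an argument; the phrase ``the multiplier `localises' the spectral content'' is not a proof, and the existence of an explicit $L^1$-formula for $\tau_{T,g\otimes f}$ is asserted rather than exhibited. These are precisely the places where the actual work in \cite{DawSpr} lies, so what you have is an accurate roadmap but not yet a proof.
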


Once again, this result has a long history. A proof for amenable groups was presented in \cite[\S 4]{Eym2}, where it is attributed to Herz. It was subsequently extended by Cowling \cite{Cow2} to some rank 1 Lie groups, and then to all weakly amenable groups in \cite{Cow3}. In that same paper, Theorem \ref{Thm_conv_pseu} was announced with a hint of proof. A complete proof was presented in \cite{DawSpr}.

It is still unknown whether there is a group and a value of $p$ for which the inclusion $PM_p(G)\subset CV_p(G)$ is strict.

\bibliographystyle{plain} 

\bibliography{Bibliography}

\end{document}